\documentclass[twoside,11pt]{article}

\textheight23.5cm 
\textwidth16cm
\topmargin-2cm
\evensidemargin0cm
\oddsidemargin0cm

\usepackage{amsmath}
\usepackage{amssymb}
\usepackage{latexsym,amsthm}
\usepackage{mathtools}
\usepackage[final]{showkeys}
\usepackage{graphicx}
\usepackage{mathrsfs}
\usepackage{xcolor}
\usepackage{hyperref}
\usepackage{enumitem}
\usepackage{esint}

\newtheorem{lemma}{Lemma}
\newtheorem{corollary}{Corollary}
\newtheorem{theorem}{Theorem}

\theoremstyle{definition}
\newtheorem{remark}{Remark}

\newtheorem{example}{Example}

\title{
A biharmonic analogue of the Alt-Caffarelli problem
}

\author{Hans-Christoph Grunau
\thanks{Fakult\"{a}t f\"{u}r Mathematik, 
Otto-von-Guericke-Universit\"{a}t,
Postfach 4120,
39016 Magdeburg, Germany,
E-mail: \texttt{hans-christoph.grunau@ovgu.de}}
\and 
Marius M\"uller
\thanks{Institut f\"ur Mathematik, Universit\"at Augsburg,
	Universit\"atsstra{\ss}e 2,
	86159 Augsburg, Germany,
E-mail: \texttt{marius1.mueller@uni-a.de}}%Neue Adresse
}

\date{\today}

\begin{document}

\maketitle

\begin{abstract}
We study a natural biharmonic analogue of the classical Alt-Caffarelli problem, both under Dirichlet and under 
Navier boundary conditions. We show existence, basic properties and $C^{1,\alpha}$-regularity of minimisers. For the Navier problem we also obtain a symmetry result in case that the boundary data are radial. We find this  remarkable because the problem under investigation is of higher order. Computing radial minimisers explicitly we find that the obtained regularity is optimal.\\
\emph{AMS 2020 Subject Classification.} Primary 35R35, 31B30, Secondary 35B07, 28A12. \\ 
\emph{Keywords}. Alt-Caffarelli problem, Biharmonic operator, Free boundary, Symmetrisation.
\emph{Conflict of Interest Statement.} On behalf of all authors, the corresponding author states that there is no conflict of interest.
\end{abstract}

%=======================================================================
\section{Introduction}

For a given bounded and suitably smooth domain $\Omega\subset \mathbb{R}^n$ and a given boundary datum $\varphi\in H^2(\Omega)$, Dipierro, Karakhanyan, and Valdinoci studied in \cite{DiKaVa}
existence, regularity and further qualitative properties of minimisers of 
the following functional
\begin{equation}\label{eq:1.1}
J(u)=\int_\Omega \left( |\Delta u(x)|^2+1_{\{u>0\}}(x)\right)\, \mathrm{d}x
\end{equation}
for
\begin{equation}\label{eq:1.2}
u\in {\mathscr N} := \{v\in H^2(\Omega): (v-\varphi)\in H^1_0(\Omega)\}.
\end{equation} %Bemerkung des Gutachters eingebaut
The set ${\mathscr N}$ models the so-called Navier boundary conditions where only the height of $u$ is prescribed on $\partial\Omega$ and not also the slope.
The functional $J$ in \eqref{eq:1.1} resembles a lot the Alt-Caffarelli-functional in \cite{AltCaffarelli} where $|\Delta u|^2$ has to be replaced by $|\nabla u|^2$.
However, since Alt and Caffarelli considered only positive boundary data,
thanks to the maximum principle % Gelöscht: or the Stampacchia trick 
their original problem remains the same when replacing $1_{\{u>0\}}(x)$ by $1_{\{u\not=0\} }(x)$.
 In fact, for large enough domains $\Omega$ minimisers in \cite{AltCaffarelli}
 exhibit large \emph{flat regions} with $u(x)=0$. 
These flat regions have a physical meaning.  They describe e.g. the \emph{shadow zone} or \emph{wake} which a jet of a fluid leaves after hitting an obstacle, cf. \cite{Tepper}.  Moreover, this  problem is related to optimisation problems involving \emph{capacities}, cf. \cite[Chapter 14]{Flucher}.

As for the functional $J(\,.\,)$ in \eqref{eq:1.1}, introduced by  Dipierro, Karakhanyan, and Valdinoci the situation is completely different.
The second author proved in  \cite{Mueller} that minimisers of $J$ have no flat parts at all (if $n=2$), since for each minimiser  $\nabla u \neq 0$ is satisfied on $\{ u = 0 \}$.   

%Umformuliert: We think that it lies in the core of the Alt-Caffaralli Problem to reward flat regions
In order to retrieve minimisers with flat regions we propose to look at a modified functional, namely
%. Thus we consider here instead the functional
\begin{equation}\label{eq:1.3}
F(u)=\int_\Omega \left( |\Delta u(x)|^2+1_{\{u\not=0\}}(x)\right)\, \mathrm{d}x
\quad \; \;   \mbox{for} \quad  u\in {\mathscr N}\quad \mbox{or alternatively for }\quad u\in {\mathscr D},
\end{equation}
where the set
\begin{equation}\label{eq:1.4}
{\mathscr D} := \{v\in H^2(\Omega): (v-\varphi)\in H^2_0(\Omega)\}
\end{equation}
models the so-called Dirichlet boundary conditions, where  the height of $u$ 
as well as its slope are prescribed on $\partial\Omega$. 

We think that the functional $F$ exhibits minimisers which
%Umformuliert is 
%mathematically more natural and would yield minimisers which 
resemble more those obtained by Alt and Caffarelli, as flatness is rewarded. %Nebensatz ergaenzt
Example~\ref{ex:1.1} and the examples from Section~\ref{sec:radial} give evidence to this statement. % Umformuliert: strong evidence -> evidence

Nevertheless one should have in mind that the variational problem is of second order, i.e. that any admissible function and in particular any minimiser has to be in $H^2$ which means that on the boundary of the ``flat set'' $\{x\in \Omega: u(x)=0\}$ (or, more precisely, its interior), 
not only the values of the function but also of its derivatives have to match somehow. Outside this flat set, minimisers turn out to be biharmonic, and for biharmonic functions no strong  maximum principle and, in general, not even a weak form of a comparison principle is available. In the variational context this is reflected by the fact that for $u\in H^2$, in general $u^+\not\in H^2$, i.e. the ``Stampacchia trick'' (comparing the energy of $u$ and $u^+$)  {does not apply}. % Erklaerung zum Stampacchia trick
For features like positivity of superbiharmonic functions in balls, sign change in general domains, but also dominance of positivity in general domains, one may give a look at \cite[Chapters 1, 2.6, 5 and 6]{GGS}. Having these features in mind one may expect that in the variational problem under investigation, minimisers for positive boundary data may look ``almost positive'' but may at the same time exhibit also  oscillations, in particular close to the ``flat set''. Although a detailed investigation of these qualitative properties of minimisers would be in our opinion very interesting, this is beyond the scope of the present work and may be addressed in future research, in particular in combination with numerical approximations.

In the present paper we study existence, some basic qualitative properties and (optimal) regularity of minimisers in $\mathscr{D}$ and $\mathscr{N}$. % neu hinzugefügt
Next we summarise the main findings. 
\begin{flushleft}
    \textbf{I. Interior $C^{1,\alpha}$-regularity, $\alpha \in (0,1)$.} 
\end{flushleft}

Adapting the methods in \cite{DiKaVa} we infer first that minimisers lie in $C^{1,\alpha}(\Omega)$ for all $\alpha \in (0,1)$, cf. Section \ref{sec:BMO}. We also show that the set $\{ u \neq 0 \}$ is in general nonempty and may have large measure, cf. Section \ref{sec:flatset}. 

 We show further that  minimisers $u\in \mathscr{N}$ with 
  radial boundary data in $\Omega = B_1(0)$ 
 are always smooth in a neighborhood of the boundary
 %Ersetzt: even up  to the -> in a neighborhood
 and  radially symmetric.
\begin{flushleft}
    \textbf{II. Boundary regularity (for radial data).} 
\end{flushleft}
 The proof of the boundary regularity result is given in Section~\ref{sec:app_BMO} and relies on a ``Kelvin transform''-like reflection trick applied to $\Delta u$.
 It is by no means obvious how to extend this proof and how to keep the free boundary of $\{x\in \Omega:u(x)=0\}$ apart from the boundary $\partial \Omega$, when  general smooth domains and general smooth boundary data are considered. We  have to leave this as a challenging open problem. Only in dimensions $n\in\{1,2,3\}$ such a reasoning is obvious thanks to Sobolev embedding.  \newpage
\begin{flushleft}
    \textbf{III. Radial symmetry.} 
\end{flushleft}
 
  The proof of the radial symmetry result is based upon a modification of \emph{Talenti's symmetrisation method} (\cite{Talenti}) in annular shaped domains, applied to $|\Delta u|$, cf. Section \ref{sec:naviersym} and {Appendix}~\ref{app:rearrangement}. We find this  result remarkable, because 
 for higher order problems  symmetry results are in general rather involved and their validity is often limited due to the abovementioned lack of  maximum and comparison  principles, see e.g. \cite[Section~7.1]{GGS}. 
 At the same time we have to leave the question open whether  the same radiality result holds  for minimisers in $\mathscr{D}$. 

\begin{flushleft}
    \textbf{IV. Impossibility of (general) $C^2$-regularity.}
\end{flushleft}

 We finally obtain that minimisers in $\mathscr{N}$ do in general not lie in $C^2(\Omega)$, which shows that the obtained $C^{1, \alpha}$-regularity result is optimal. According to the abovementioned symmetry result, Navier  minimisers $u$ in balls for constant boundary data $\varphi(x)\equiv \operatorname{const}=u_0$ are always radial.
 In $B_1(0)\subset \mathbb{R}^2$ we compute these minimisers explicitly.  We see that for $u_0>0$ small enough they exhibit a flat region $\{x\in B_1(0):u(x)=0\}\not=\emptyset$ and we infer that they do not lie in $C^2(\Omega)$, cf. Section \ref{sec:radial}. See Corollary~\ref{cor:opt_reg} in Section~\ref{sec:naviersym}.
\\
%Neu hinzugefuegt.
\\
One should remark that the regularity results I and IV leave open whether $C^{1,1}$-regularity can generally be obtained. Due to the absence of a PDE for the minimizer, this regularity discussion goes beyond the scope of this article. Notice however that all the radial minimisers we study in Section \ref{sec:radial} are $C^{1,1}$-regular. In \cite{DiKaVa2019}, Dipierro, Karakhanyan and Valdinoci study a singular perturbation problem for the functional $J$ in \eqref{eq:1.1}. It is observed that the $C^{1,1}$-norm of the constructed approximate solutions may degenerate, cf. \cite[Theorem 1.7]{DiKaVa2019}. This is a remarkable phenomenon. Notice however that it does not imply that solutions are generally not $C^{1,1}$-regular. Indeed, the second author proves that (if $n=2$) minimisers of $J$ even have higher $C^{2,1}$-regularity, cf. \cite[Theorem 5.2]{Mueller2023}. %Though, the observation in  \cite{DiKaVa2019} indicates that the optimal regularity is not straightforward to obtain. 
We think that the discussion of $C^{1,1}$-regularity is an interesting, yet difficult, question. 
 
%=======================================================================

\section{Existence, basic properties, and regularity of minimisers}
In this section we prove the existence and basic properties of minimisers in $\mathscr{D}$ and $\mathscr{N}$. We formulate the regularity result, which will be proved in Section~\ref{sec:BMO} and whose optimality is shown in  Corollary~\ref{cor:opt_reg} at the end of Section~\ref{sec:naviersym}.

In one dimension minimisers can be computed explicitly. Doing so, we see that the flat set $\{ u = 0 \}$ may become arbitrarily large, but it may also become void. Moreover we study the value of the infimum of the energy.  

\begin{theorem}[Existence of minimisers]\label{thm:existence}
	Assume that $\Omega\subset\mathbb{R}^n$ is a bounded $C^2$-smooth domain
	and that $\varphi\in H^2(\Omega)$.
	The functional $F$ attains its infimum on  ${\mathscr N}$ as well as  on  ${\mathscr D}$ which are defined in \eqref{eq:1.2} and \eqref{eq:1.4}.	
\end{theorem}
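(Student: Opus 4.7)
}
The plan is to run the direct method of the calculus of variations, where the only delicate point is the lower semicontinuity of the ``measure term'' $u\mapsto |\{u\neq 0\}|$. Since $\varphi\in\mathscr{D}\subset\mathscr{N}$, both admissible classes are nonempty and $F(\varphi)\le \int_\Omega|\Delta\varphi|^2\diff x + |\Omega| < \infty$, so the infimum $m:=\inf F$ is a finite nonnegative number. Let $(u_k)_{k\in\N}$ be a minimising sequence in $\mathscr{N}$ (resp.\ $\mathscr{D}$).

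First I would establish $H^2$-boundedness. Since $\int_\Omega|\Delta u_k|^2\diff x \le F(u_k) \le m+1$ for $k$ large, the functions $v_k:=u_k-\varphi$ lie in $H^1_0(\Omega)\cap H^2(\Omega)$ (resp.\ $H^2_0(\Omega)$) with $\|\Delta v_k\|_{L^2}$ uniformly bounded. On a bounded $C^2$-domain, elliptic regularity (or equivalently, the solvability theory for the Poisson problem with zero Navier/Dirichlet data) yields an equivalence $\|v\|_{H^2}\le C\|\Delta v\|_{L^2}$ on both $H^1_0\cap H^2$ and $H^2_0$, so $(v_k)$ and hence $(u_k)$ is bounded in $H^2(\Omega)$. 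After passing to a subsequence (not relabelled), $u_k\rightharpoonup u$ weakly in $H^2(\Omega)$ for some $u\in H^2(\Omega)$. The affine spaces $\mathscr{N}$ and $\mathscr{D}$ are weakly closed (as $\varphi$ plus a closed linear subspace), hence the weak limit $u$ lies in the same admissible class.

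Next I would pass to the limit in the two parts of $F$. Weak lower semicontinuity of the convex functional $w\mapsto \int_\Omega|\Delta w|^2\diff x$ immediately gives
\begin{equation*}
\int_\Omega|\Delta u|^2\diff x \;\le\; \liminf_{k\to\infty}\int_\Omega|\Delta u_k|^2\diff x.
\end{equation*}
By Rellich--Kondrachov, $H^2(\Omega)\hookrightarrow L^2(\Omega)$ compactly, so $u_k\to u$ strongly in $L^2$ and, after a further subsequence, pointwise almost everywhere. At every $x$ with $u(x)\neq 0$, pointwise convergence implies $u_k(x)\neq 0$ for all sufficiently large $k$, so $\mathbf 1_{\{u\neq 0\}}(x)\le \liminf_{k\to\infty}\mathbf 1_{\{u_k\neq 0\}}(x)$; this inequality is trivial at points where $u(x)=0$. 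Fatou's lemma then yields
\begin{equation*}
|\{u\neq 0\}| \;\le\; \liminf_{k\to\infty} |\{u_k\neq 0\}|.
\end{equation*}
Summing both estimates gives $F(u)\le \liminf_{k\to\infty} F(u_k) = m$, and since $u$ is admissible, $F(u)=m$.

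The main obstacle is that $u\mapsto |\{u\neq 0\}|$ is not weakly sequentially lower semicontinuous on $H^2(\Omega)$ for general sequences; this is precisely where compactness enters, promoting weak $H^2$-convergence to almost-everywhere convergence so that Fatou applies. Everything else is a routine application of the direct method. The argument works identically in both $\mathscr{N}$ and $\mathscr{D}$, the only difference being the elliptic estimate used to extract the $H^2$-bound.
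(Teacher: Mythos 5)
Your proposal is correct and follows essentially the same route as the paper's proof: direct method, $H^2$-bound via elliptic theory, weak compactness with a.e.\ convergence along a subsequence, weak lower semicontinuity of $\int_\Omega|\Delta\,\cdot\,|^2\,\mathrm{d}x$, and Fatou's lemma for the measure term. No gaps.
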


\begin{proof}
	We give the proof only on ${\mathscr N}$, because on ${\mathscr D}$ the proof is similar and as for the $H^2$-boundedness even simpler. Since $\Omega$ is bounded the infimum of $F$ on ${\mathscr N}$ is finite, i.e.
	$$
	\alpha:= \inf_{v\in {\mathscr N}} F(v) \in [0,\infty).
	$$
	We consider a minimising sequence
	$$
	(v_k)_{k \in \mathbb{N}}\subset {\mathscr N}: \quad \lim_{k\to\infty} F(v_k)=\alpha.
	$$
	In particular, $(\| \Delta v_k\|_{L^2(\Omega)})_{k\in\mathbb{N}}$ is bounded.
	Thanks to elliptic $H^2$-theory \cite[Chapter 9]{GilbargTrudinger} and the $C^2$-smoothness and boundedness of $\Omega$ we see that also
	$(\|  v_k\|_{H^2(\Omega)})_{k\in\mathbb{N}}$ is bounded. Hence we find a $u\in H^2(\Omega)$ so that after passing to a subsequence we have
	$$
	v_k \rightharpoonup u \mbox{\ in\ } H^2(\Omega),\quad 
	v_k \to u \mbox{\ in\ } H^1(\Omega),\quad 
	v_k(x) \to u(x) \mbox{\  for a.e.\ }x \in\Omega.
	$$
	This yields immediately that also $u-\varphi\in H^1_0(\Omega)$, hence
	$
	u\in {\mathscr N}.
	$
	(When working on ${\mathscr D}$, one exploits that $\varphi+H^2_0(\Omega)$ is convex and closed and so, weakly sequentially closed in $H^2(\Omega)$. This shows that $u\in \varphi+H^2_0 (\Omega)$.) As it is well known, the $H^2$-norm is weakly sequentially {lower semicontinuous} in $H^2$, in particular
	$$
	\int_\Omega |\Delta u(x)|^2\, \mathrm{d}x\le \liminf_{k\to\infty}\int_\Omega |\Delta v_k(x)|^2\, \mathrm{d}x.
	$$
	Moreover, 	$v_k(x) \to u(x) \mbox{\ almost everywhere in\ }\Omega$ implies
	 that $1_{\{u\not=0\}}(x)\le \liminf_{k\to\infty}1_{\{v_k\not=0\}}(x)$
	almost everywhere in $\Omega$. Fatou's lemma then yields
	$$
	\int_\Omega 1_{\{u\not=0\}}(x)\, \mathrm{d}x
	\le \int_\Omega\liminf_{k\to\infty} 1_{\{v_k\not=0\}}(x)\, \mathrm{d}x
	\le \liminf_{k\to\infty}\int_\Omega 1_{\{v_k\not=0\}}(x)\, \mathrm{d}x.
	$$
	 All in all we see that
	\begin{align*}
	\alpha\le F(u) =&\int_\Omega |\Delta u(x)|^2\, \mathrm{d}x+\int_\Omega 1_{\{u\not=0\}}(x)\, \mathrm{d}x
	\le \liminf_{k\to\infty}\int_\Omega |\Delta v_k(x)|^2\, \mathrm{d}x
	+\liminf_{k\to\infty}\int_\Omega 1_{\{v_k\not=0\}}(x)\, \mathrm{d}x\\
	\le &\liminf_{k\to\infty}\left(\int_\Omega |\Delta v_k(x)|^2\, \mathrm{d}x
	+\int_\Omega 1_{\{v_k\not=0\}}(x)\, \mathrm{d}x\right)
	=\liminf_{k\to\infty} F(v_k)=\alpha.
	\end{align*}
	This proves that $u$ minimises $F$ in ${\mathscr N}$.
\end{proof}
 
\begin{remark}\label{rem:radial}
	In case that $\Omega=B_R(0)$ is a ball of radius $R$ and the boundary datum $\varphi$ is radially symmetric, one may also consider the smaller sets ${\mathscr N}_{\operatorname{rad}}:=\{v\in {\mathscr N}: v \mbox{\ is radially symmetric}\}$ and ${\mathscr D}_{\operatorname{rad}}:=\{v\in {\mathscr D}: v \mbox{\ is radially symmetric}\}$. Similar to Theorem~\ref{thm:existence} one shows that $F$ 
	attains its minimum also on ${\mathscr N}_{\operatorname{rad}}$ and ${\mathscr D}_{\operatorname{rad}}$, respectively. 
	An interesting question in the context of biharmonic problems is whether we have symmetry, i.e. whether the minimisers on the radial and the general sets of admissible functions  coincide.
	We show in Section \ref{sec:naviersym} that minimisers  in $\mathscr{N}_{\operatorname{rad}}$ 
	coincide  indeed with those in $\mathscr{N}$. In $\mathscr{D}$, i.e. under Dirichlet boundary conditions, we have to leave this question open.
\end{remark}

\begin{example}\label{ex:1.1}
		In the special one-dimensional case the problem may be solved explicitly.
		We consider the interval $\Omega=(-R,R)$ of length $2R$ under Navier boundary conditions
		$$
		u(\pm R)=1, \qquad u''(\pm R)=0.
		$$
		According to Theorem~\ref{thm:symmetry}\textemdash or more elementary, according to the remark at the end of this example\textemdash any minimiser is even.  
		Since minimisers in this case have to be at least $C^1$-smooth and biharmonic on $\Omega \setminus  \{x \in \Omega: u(x) =u'(x) = 0\}$, %Bemerkung des Gutachters eingebaut
		 candidates for a minimiser are either the function
		$$
		u_0(x)\equiv 1 \quad \mbox{\ in\ }\quad  (-R,R)
		$$
		(biharmonic in the whole interval and empty ``free part'') or 
		the following functions for
		$\rho \in (0,R)$:
		$$
		u_\rho (x)=
		\begin{cases}
		0, &\mbox{for}\quad |x|\in [0,\rho],\\
		 \frac{(|x|-\rho)^2(3R-2\rho-|x|)}{2(R-\rho)^3},
		&\mbox{for}\quad |x|\in [\rho,R]
		\end{cases}
		$$
		(with $[-\rho,\rho]$ as ``free part'' and biharmonic on $(\rho,R)$).
		
		As for the energies of these candidates we calculate
		$$
		F(u_0)= 2R
		$$
		and 
		$$
		F(u_\rho)= 2(R-\rho) +\frac{6}{(R-\rho)^3}.
		$$
		Admitting (formally) $\rho\in [0,R)$ we see that the optimal choice 
		$\rho_{\min}$ of $\rho$ and the corresponding energy are:
		$$
		\rho_{\min} =\begin{cases}
		R-\sqrt3 , \quad &\mbox{\ if\ } R>\sqrt3,\\
		0 , \quad &\mbox{\ if\ } 0\le R\le\sqrt3,
		\end{cases}
		\qquad 
		F(u_{\rho_{\min}})  
		=\begin{cases}
		 2\sqrt3+\frac2{\sqrt3}, \quad &\mbox{\ if\ } R>\sqrt3,\\
		 2R+\frac{6}{R^3}, \quad &\mbox{\ if\ } 0\le R\le\sqrt3.
		\end{cases}
		$$
		Comparing this with $u_0$ we see that the minimiser/s $u_{\min}$ of our functional  and its 
		energy is/are given by 
		$$
		u_{\min} =\begin{cases}
		u_0 , \quad &\mbox{\ if\ } 0\le  R\le \sqrt3+\frac{1}{\sqrt3},\\
		u_{R-\sqrt3} , \quad &\mbox{\ if\ }  R\ge\sqrt3+\frac{1}{\sqrt3},
		\end{cases}
		\qquad 
		F(u_{\min})  
		=\begin{cases}
		2R, \quad &\mbox{\ if\ } 0\le R\le\sqrt3+\frac{1}{\sqrt3},\\
		2\sqrt3+\frac2{\sqrt3}, \quad &\mbox{\ if\ } R\ge\sqrt3+\frac{1}{\sqrt3}.
		\end{cases}
		$$
				\emph{These calculations may be carried out separately for the ``left'' and the ``right'' part of the minimisers. From this one can directly infer that they must be symmetric about $0$.}
\end{example}

\begin{remark}
One thing that becomes visible is that minimisers may be not unique. In the previous example this is the case for $ R=\sqrt3+\frac{1}{\sqrt3}$. This means that minimisers do \emph{not} depend continuously on  the domain.

Another thing that becomes visible is that
the flat set $\{u = 0 \}$ in the previous example cannot be arbitrarily small, if one excludes cases where $\{ u_{\min} = 0 \} = \emptyset$. Indeed, notice that if $R\ge \sqrt{3}+\frac{1}{\sqrt3}$ the ratio 
\begin{equation*}
    \frac{|\{ u_{\min} = 0 \}|}{|\Omega|} = \frac{2(R-\sqrt{3})}{2R}=1-\frac{\sqrt3}{R}
\end{equation*}
attains precisely the values in $[\frac{1}{4},1]$.  
\end{remark}

Next we state our main regularity result for minimisers.

\begin{theorem}\label{thm:regularity_1}
	Assume that $\Omega\subset\mathbb{R}^n$ is a bounded $C^2$-smooth domain
	and that $\varphi\in H^2(\Omega)$. The minimiser $u$ of $F$ on  ${\mathscr N}$ or on $ {\mathscr D}$, respectively, lies in $C^{1,\alpha}(\Omega)$ for any $\alpha \in (0,1)$, and is smooth and biharmonic on the open set $\Omega \setminus \{x\in \Omega:u(x)= \nabla u (x)=0\}$.
\end{theorem}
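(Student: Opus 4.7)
The plan is to adapt the BMO-based strategy of Dipierro, Karakhanyan and Valdinoci \cite{DiKaVa} to the present setting. The core reduction is to prove $\Delta u\in\mathrm{BMO}_{\mathrm{loc}}(\Omega)$; by John-Nirenberg this forces $\Delta u\in L^p_{\mathrm{loc}}(\Omega)$ for every finite $p$, and the interior Calderón-Zygmund theory for the Laplacian together with the Morrey embedding $W^{2,p}\hookrightarrow C^{1,1-n/p}$ then delivers $u\in C^{1,\alpha}(\Omega)$ for every $\alpha\in(0,1)$.

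To obtain the BMO estimate I would fix $x_0\in\Omega$ and $r>0$ with $\overline{B_r(x_0)}\subset\Omega$ and introduce the biharmonic replacement $w$ defined by
\[
\Delta^2 w=0\;\text{ in }B_r(x_0),\qquad w-u\in H^2_0(B_r(x_0)).
\]
Extended by $u$ outside $B_r(x_0)$, the function $w$ lies in $\mathscr N$ (resp.\ $\mathscr D$), so the minimality of $u$, combined with $0\le 1_{\{v\ne 0\}}\le 1$ and the orthogonality identity $\int_{B_r(x_0)}\Delta w\,\Delta(u-w)\,\mathrm{d}x=0$, gives the Caccioppoli-type bound
\[
\int_{B_r(x_0)}|\Delta u-\Delta w|^2\,\mathrm{d}x=\int_{B_r(x_0)}\left(|\Delta u|^2-|\Delta w|^2\right)\mathrm{d}x\le |B_r(x_0)|\le C r^n.
\]
Since $\Delta w$ is harmonic on $B_r(x_0)$, the classical decay estimate
\[
\int_{B_\rho(x_0)}\left|\Delta w-(\Delta w)_{B_\rho(x_0)}\right|^2\mathrm{d}x\le C\Bigl(\frac{\rho}{r}\Bigr)^{n+2}\int_{B_r(x_0)}\left|\Delta w-(\Delta w)_{B_r(x_0)}\right|^2\mathrm{d}x
\]
holds for all $0<\rho\le r$. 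Writing $\phi(\rho):=\int_{B_\rho(x_0)}|\Delta u-(\Delta u)_{B_\rho(x_0)}|^2\,\mathrm{d}x$, splitting $\Delta u=\Delta w+(\Delta u-\Delta w)$ and combining the two preceding inequalities produces the Campanato iteration
\[
\phi(\rho)\le A\Bigl(\frac{\rho}{r}\Bigr)^{n+2}\phi(r)+B r^n,
\]
and the standard iteration lemma (Giaquinta-type) then implies $\phi(\rho)\le C\rho^n$, i.e.\ $\Delta u$ lies in the Campanato space $\mathcal L^{2,n}_{\mathrm{loc}}(\Omega)=\mathrm{BMO}_{\mathrm{loc}}(\Omega)$, as required.

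For the smoothness and biharmonicity statement, the already established continuity of $u$ and $\nabla u$ makes both $\{u\ne 0\}$ and $\{|\nabla u|\ne 0\}$ open. On any ball $B$ contained in $\{u\ne 0\}$, every $C_c^\infty(B)$-perturbation $v$ satisfies $1_{\{u+\varepsilon v\ne 0\}}=1_{\{u\ne 0\}}$ for sufficiently small $\varepsilon$, so the minimality of $u$ forces $\int_B\Delta u\,\Delta v\,\mathrm{d}x=0$. On a ball $B$ contained in $\{|\nabla u|\ne 0\}$, the implicit function theorem applied to the $C^{1,\alpha}$-function $u$ shows that $\{u=0\}\cap B$ is a Lebesgue-negligible $C^{1,\alpha}$-hypersurface, and the same holds for $u+\varepsilon v$ with $\varepsilon$ small; once more the indicator term does not contribute and $\Delta^2 u=0$ weakly on $B$. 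Interior elliptic regularity for the bi-Laplacian upgrades this weak biharmonicity to the claimed smoothness on $\Omega\setminus\{u=\nabla u=0\}$.

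The main obstacle is the first step: absorbing the rough indicator term into the energy comparison. What makes it work is that $1_{\{v\ne 0\}}$ is uniformly bounded by $1$, so comparison with the biharmonic replacement yields a clean bound of order $r^n$, after which the argument becomes standard Campanato machinery. A secondary technical point is that $w$ must be admissible both in $\mathscr N$ and in $\mathscr D$, but this is immediate since the zero extension of the $H^2_0(B_r(x_0))$-function $w-u$ preserves whichever boundary trace condition is in force on $\partial\Omega$.
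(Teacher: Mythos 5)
Your argument is correct. The $C^{1,\alpha}$ part coincides with the paper's: biharmonic replacement in a ball, the orthogonality identity turning the energy comparison into $\int_{B_r}|\Delta u-\Delta w|^2\le Cr^n$, the Campanato decay of the harmonic function $\Delta w$, and the Giaquinta iteration lemma, followed by John--Nirenberg, Calder\'on--Zygmund and Morrey embedding. Likewise your treatment of $\{u\ne 0\}$ is the paper's Step 1. Where you genuinely diverge is on the set $\{u=0,\ \nabla u\ne 0\}$: you observe that for $v\in C_c^\infty(B)$ with $\overline{B}\subset\{\nabla u\ne0\}$ the perturbed function $u+\varepsilon v$ still has non-vanishing gradient on $\operatorname{supp}(v)$ for small $\varepsilon$ (and equals $u$ elsewhere on $B$), so its zero set in $B$ remains Lebesgue-negligible, the indicator term is unchanged, and the first variation gives $\int\Delta u\,\Delta v=0$ directly. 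The paper instead compares $u$ with the \emph{biharmonic replacement} $w$ on $B_r(x_0)$, shows $F(v)=F(u)$ for the glued competitor, and invokes uniqueness of the minimiser of $\chi\mapsto\int(\Delta\chi)^2$ in $u+H^2_0(B_r(x_0))$ to conclude $w=u$. Your route is more direct and avoids the uniqueness argument, at the small price of having to check that the zero level set of \emph{every} admissible perturbation is negligible (which your implicit-function-theorem remark does supply); the paper's route only needs negligibility of $\{u=0\}\cap B_r(x_0)$ for the single function $u$. Both are valid, and both reduce to the same openness observation that every point of $\Omega\setminus\{u=\nabla u=0\}$ lies in a ball contained in $\{u\ne0\}$ or in $\{\nabla u\ne0\}$.
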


We prove this result at the end of Section~\ref{sec:BMO}. We will moreover see in Section \ref{sec:radial} and Section  \ref{sec:naviersym} that  {minimisers do in general not} lie in $C^2(\Omega)$.

\begin{remark}\label{rem:1.2}
The regularity result only  studies interior regularity. However, one would  also like to conclude boundary regularity, provided that the assumptions keep the flat region $\{x\in \Omega:u(x)= \nabla u (x)=0\}$ away from the boundary. To this end we assume 
 that $\Omega$ has smooth boundary and that  $\varphi$ is smooth and strictly positive on $\overline{\Omega}$. Then it is possible to conclude further %Bemerkung des Gutachters eingebaut
in the special situation of Theorem~\ref{thm:app_Delta_BMO} 
and in general
in dimensions $n=1,2,3$ where ${H^2(\Omega)} \hookrightarrow C^0(\overline{\Omega})$. 
	Indeed, since $\varphi|_{\partial \Omega}>0$ and each minimiser $u$  lies in $ C^0(\overline{\Omega})$ one has that $\{x\in \overline \Omega:u(x)>0\}$ 
	is a neighbourhood of $\partial \Omega$. In particular there exists also a smooth tubular neighbourhood $\Omega'\subset\overline{\Omega}$ 
	of $\partial\Omega$ such that $u$ is  biharmonic and smooth on $\overline{\Omega'} \setminus \partial \Omega$. 
	The smoothness of its boundary values $\varphi$ on $\partial \Omega$ yield that then also $u \in C^\infty(\overline{\Omega'})$.  
	As a consequence, the Dirichlet or Navier boundary conditions are attained in a classical sense. In the case of Navier
	boundary conditions this means that
	$$
	u|_{\partial \Omega}=\varphi|_{\partial \Omega}\quad \mbox{and}\quad \Delta u|_{\partial \Omega}=0.
	$$ 
	In the case of minimisation in $\mathscr{N}_{\operatorname{rad}}$ or $\mathscr{D}_{\operatorname{rad}}$ (cf. Remark \ref{rem:radial}) the boundary regularity can also be obtained for arbitrary dimensions, 
	since by radial Sobolev embedding $\mathscr{N}_{\operatorname{rad}}, \mathscr{D}_{\operatorname{rad}} \subset C^0(\overline{\Omega}\setminus \{0\})$. 
\end{remark}

\begin{remark}
If one sets $A := \{ x \in \Omega : u(x) = \nabla u (x) = 0 \}$ one infers that each minimiser $u \in \mathscr{D}$ (formally) solves the \emph{biharmonic Dirichlet problem}
\begin{equation*}
\begin{cases}
    \Delta^2 u = 0 & \textrm{in } \Omega \setminus A, \\
     u =\varphi,\quad  \partial_\nu u  = \partial_\nu \varphi& \textrm{on }\partial \Omega,\\
      u = |\nabla u | = 0 & \textrm{on } \partial  A ,
    \end{cases}
\end{equation*}
where $\partial_\nu$ denotes the exterior normal derivative.

In contrast to this, a minimiser $u \in \mathscr{N}$ solves a \emph{mixed Dirichlet-Navier problem}
\begin{equation*}
\begin{cases}
    \Delta^2 u = 0 & \textrm{in } \Omega \setminus A, \\ u = \varphi,\quad \Delta u = 0 & \textrm{on } \partial \Omega, \\ 
     u= |\nabla u | = 0 & \textrm{on }\partial A.
    \end{cases}
\end{equation*}
This means that in \emph{both} cases our problem does \emph{not} decouple into a second order system for $u$ and $\Delta u$.
\end{remark}

%====================================================================

\subsection{The energy infimum}

Another noteworthy thing is that in the case of the Navier problem, the energy infimum is bounded above independently of the boundary datum, while in the case of the Dirichlet problem this is not the case.

\begin{lemma}[The energy infimum] \label{lem:energinfi}
Let $\Omega \subset \mathbb{R}^n$ be a domain with $C^2$-smooth boundary and $\varphi \in H^2(\Omega)$. Then %Geloescht: $\varphi >0$
\begin{equation}\label{eq:Navierinf}
    \inf_{\psi \in \mathscr{N}} F(\psi) \leq |\Omega|
\end{equation}
and 
\begin{equation}\label{eq:Dirinf}
    \inf_{\psi \in \mathscr{D}} F(\psi) \geq \frac{1}{|\Omega|} \left(\int_{\partial \Omega} \nabla \varphi \cdot \nu_\Omega \; \mathrm{d}\mathcal{H}^{n-1} \right)^2.
\end{equation}
Here $\mathcal{H}^{n-1}$ denotes the $n-1$-dimensional Hausdorff measure.
\end{lemma}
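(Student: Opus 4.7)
\medskip

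\noindent\textbf{Plan for Lemma \ref{lem:energinfi}.}

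\emph{For the Navier bound \eqref{eq:Navierinf}.} My strategy is to exhibit a concrete competitor $\psi\in\mathscr{N}$ satisfying $F(\psi)\le |\Omega|$. The natural candidate is the harmonic extension of the boundary trace of $\varphi$: let $\psi\in H^1(\Omega)$ be the unique solution of
\begin{equation*}
\Delta \psi = 0 \quad \text{in } \Omega, \qquad \psi = \varphi \quad \text{on } \partial\Omega.
\end{equation*}
Since $\varphi\in H^2(\Omega)$, the trace $\varphi|_{\partial\Omega}$ lies in $H^{3/2}(\partial\Omega)$, and the $C^2$-smoothness of $\partial\Omega$ together with standard elliptic regularity theory (as in \cite[Chapter 9]{GilbargTrudinger}, which is already invoked in the proof of Theorem~\ref{thm:existence}) gives $\psi\in H^2(\Omega)$. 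Because $\psi$ and $\varphi$ share the same trace, $\psi-\varphi\in H^1_0(\Omega)$, so indeed $\psi\in\mathscr{N}$. Since $\Delta \psi\equiv 0$, the first term of $F(\psi)$ vanishes, and the second term is at most $|\Omega|$, giving the bound. (Under the hypothesis $\varphi>0$, the strong maximum principle in fact yields $\psi>0$ in $\Omega$, so equality $F(\psi)=|\Omega|$ holds, but the inequality alone is what is asserted.)

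\emph{For the Dirichlet bound \eqref{eq:Dirinf}.} The key observation is that for every $\psi\in\mathscr{D}$, the Dirichlet condition pins down both the trace of $\psi$ and the trace of its normal derivative. Concretely, $\psi-\varphi\in H^2_0(\Omega)$ implies $\partial_\nu \psi = \partial_\nu \varphi$ on $\partial\Omega$ in the trace sense. Applying the divergence theorem to the $H^1$-vector field $\nabla\psi$ (valid since $\psi\in H^2(\Omega)$ and $\partial\Omega$ is $C^2$) gives
\begin{equation*}
\int_\Omega \Delta\psi\,\diff x \;=\; \int_{\partial\Omega} \partial_\nu\psi \,\diff\mathcal{H}^{n-1} \;=\; \int_{\partial\Omega} \nabla\varphi\cdot\nu_\Omega\,\diff\mathcal{H}^{n-1}.
\end{equation*}
By Cauchy-Schwarz,
\begin{equation*}
\left(\int_{\partial\Omega} \nabla\varphi\cdot\nu_\Omega\,\diff\mathcal{H}^{n-1}\right)^2 = \left(\int_\Omega \Delta\psi\,\diff x\right)^2 \le |\Omega|\int_\Omega |\Delta\psi|^2\,\diff x.
\end{equation*}
Dropping the nonnegative term $|\{\psi\ne 0\}|$ from $F(\psi)$, we obtain \eqref{eq:Dirinf}.

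\emph{Main obstacle.} There is no serious obstacle: the Navier half reduces to a well-chosen test function plus elliptic regularity to guarantee $H^2$-membership of the harmonic extension, and the Dirichlet half is just integration by parts plus Cauchy-Schwarz. The only point requiring a moment of care is justifying that $\partial_\nu\psi=\partial_\nu\varphi$ in the trace sense for $\psi\in\mathscr{D}$, which is precisely what distinguishes the Dirichlet set $\mathscr{D}$ from $\mathscr{N}$ and is the reason why such a lower bound is not expected (and in fact is false by the complementary upper bound) in the Navier setting.
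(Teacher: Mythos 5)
Your proposal is correct and follows essentially the same route as the paper: the Navier bound via the harmonic extension of $\varphi$ (which lies in $H^2(\Omega)$ by elliptic regularity and is hence admissible in $\mathscr{N}$), and the Dirichlet bound via the divergence theorem, the trace condition $\partial_\nu\psi=\partial_\nu\varphi$ encoded in $\mathscr{D}$, and the Cauchy--Schwarz inequality. No gaps.
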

\begin{proof}
 We start with \eqref{eq:Navierinf}. Since the domain is suitably smooth and $\varphi \in H^2(\Omega)$ the unique weak solution $\psi_0 \in H^1(\Omega)$ of the Dirichlet problem  
 \begin{equation*}
     \begin{cases}
         \Delta \psi_0 = 0  & \textrm{in }\Omega \\ \; \; \; \psi_0 = \varphi & \textrm{on }\partial \Omega
     \end{cases}
 \end{equation*}
 lies actually in $H^2(\Omega)$. 
 One infers that $\psi_0 \in \mathscr{N}$ by \eqref{eq:1.2}. Moreover one computes 
 \begin{equation*}
     F(\psi_0)  = \int_\Omega (\Delta \psi_0)^2 \; \mathrm{d}x + |\{ \psi_0 \neq 0\}| = |\{ \psi_0 \neq 0 \}| \leq |\Omega|,   
 \end{equation*}
 whereupon \eqref{eq:Navierinf} follows. To show  \eqref{eq:Dirinf} we fix some arbitrary $\psi \in \mathscr{D}$ and simply use the Cauchy-Schwarz inequality. 
 \begin{equation*}
     F(\psi) \geq \int_\Omega ( \Delta \psi)^2 \; \mathrm{d}x \geq \frac{1}{|\Omega|} \left( \int_\Omega \Delta \psi \; \mathrm{d}x \right)^2 = \frac{1}{|\Omega|} \left( \int_{\partial \Omega}
     \nabla \psi \cdot \nu_\Omega \; \mathrm{d}\mathcal{H}^{n-1} \right)^2 . 
 \end{equation*}
 Using that $\nabla \psi = \nabla \varphi$ on $\partial \Omega$ (in the sense of Sobolev traces) we obtain the claim. 
\end{proof}

\begin{remark}
We see that in $\mathscr{D}$ arbitrarily small boundary functions can lead to minimisers of arbitrarily high Dirichlet energy. Indeed, for $\varepsilon> 0$ and $k \in \mathbb{N}$ we consider $\varphi_{\varepsilon,k}\in H^2 (\Omega)$ chosen in such a way that $\varphi_{\varepsilon,k} = \varepsilon + k d_\Omega$ in a fixed suitable neighbourhood of $\partial \Omega$, %Bemerkung des Gutachters umgesetzt
where $d_\Omega$ is the \emph{signed distance function} of $\Omega$, cf. \cite[Appendix p. 381]{GilbargTrudinger}. Then $\varphi\vert_{\partial \Omega} = \varepsilon$ and $\nabla \varphi \cdot \nu_\Omega \vert_{\partial \Omega} = k$, cf. \cite[Lemma 14.16]{GilbargTrudinger}. %Referenz korrigiert
By \eqref{eq:Dirinf} we infer that
\begin{equation*}
    \inf_{\psi \in \mathscr{D}} F(\psi) \geq  \frac{1}{|\Omega|}k^2 \mathcal{H}^{n-1}(\partial \Omega)^2. 
\end{equation*}
For $k$ large and $\varepsilon$ small we have produced in the $C^0$-sense small boundary data with  large optimal energies.
\end{remark}

\begin{remark}
For the minimisation in $\mathscr{N}$ we actually obtain a \emph{dichotomy result}. If $\inf_{\psi \in \mathscr{N}} F(\psi) = |\Omega|$ then one immediately can obtain a harmonic minimiser with empty free boundary, as the proof of Lemma  \ref{lem:energinfi} reveals. If $\inf_{\psi \in \mathscr{N}} F(\psi) < |\Omega|$ we infer that $|\{u = 0 \}| > 0$, i.e. the free boundary is nonempty.   
\end{remark}

%===========================================================================

\subsection{Nonemptiness of the ``flat set'' and free boundary}\label{sec:flatset}

We show that when the set $\Omega$ is large enough 
compared to the boundary conditions, then any minimiser 
$u\in \mathscr{N}$ or $u\in \mathscr{D}$ exhibits a nonempty flat set $\{x\in\Omega:u(x)=0\}$. For brevity we discuss only the case of Dirichlet conditions;  Navier conditions can be treated analogously.
More precisely we have the following result which holds irrespective of the  shape of domains.
\begin{theorem}\label{thm:flatsetnonempty}
Let $B_2(0)\subset \Omega\subset\mathbb{R}^n$ be any given $C^2$-smooth  
%prototype ersetzt durch given
domain and $\varphi \in H^2(\Omega)$ any %prototype
boundary condition. % Geloescht: with $\varphi\not=0$ almost everywhere in a neighbourhood of $\partial\Omega$.
 Let $e_n$ denote the volume of the unit ball $B_1(0)\subset \mathbb{R}^n$. Then there exists a constant $C_1(\Omega,\varphi)$ such that for any $R\in (\sqrt[4]{C_1/e_n},\infty)$ the following holds.
If we set $\Omega_R:= R\, \cdot\, \Omega$ and
$$
\varphi_R\in H^2(\Omega_R),\quad \varphi_R(x):=\varphi (x/R),\qquad 
\mathscr{D}_R:=\{v\in H^2(\Omega_R): (v-\varphi_R)\in H^2_0(\Omega_R)\},
$$
then for any minimiser $u\in \mathscr{D}_R$ of $F$ on $\Omega_R$ 
the ``flat set'' $\{x\in \Omega_R: u(x)=0\}$ has positive measure.
More precisely,
$$
|\{x\in \Omega_R: u(x)=0\}| \ge (e_nR^4-C_1)\cdot R^{n-4}.
$$
\end{theorem}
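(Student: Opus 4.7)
The plan is to compare the minimiser against an explicit competitor that vanishes on a large central ball, and then use the trivial lower bound $F(u) \ge |\{u\ne 0\}|$ to force the flat set to be large. Scaling is what converts a fixed bound on the "cost of leaving a hole'' on $\Omega$ into an $R^{n-4}$ correction, while the measure gain scales like $R^n$; this is precisely the origin of the threshold $R^4 > C_1/e_n$.

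\medskip

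\textbf{Step 1 (construction of a hole-producing extension on $\Omega$).}
Because $B_2(0)\subset \Omega$, one has $\operatorname{dist}(\overline{B_1(0)}, \partial \Omega) > 0$. I would pick a smooth cutoff $\eta \in C^\infty(\overline \Omega)$ with $\eta \equiv 0$ on $B_1(0)$ and $\eta \equiv 1$ in a neighbourhood of $\partial \Omega$, and set $\tilde\varphi := \eta \,\varphi \in H^2(\Omega)$. Since $\eta = 1$ near $\partial \Omega$, the difference $\tilde\varphi - \varphi$ vanishes together with its normal derivative near $\partial \Omega$, hence $\tilde\varphi - \varphi \in H^2_0(\Omega)$; and of course $\tilde\varphi \equiv 0$ on $B_1(0)$. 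Define the $(\Omega,\varphi)$-dependent constant
\begin{equation*}
    C_1 = C_1(\Omega,\varphi) := \int_\Omega |\Delta \tilde\varphi(x)|^2 \,\diff x.
\end{equation*}

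\textbf{Step 2 (scaling to $\Omega_R$ and competitor energy).}
Set $\psi(x) := \tilde\varphi(x/R)$ for $x \in \Omega_R = R\cdot \Omega$. By the scaling behaviour of $H^2_0$ under dilations and Step~1, one has $\psi \in H^2(\Omega_R)$ with $\psi - \varphi_R \in H^2_0(\Omega_R)$, i.e.\ $\psi \in \mathscr{D}_R$. Moreover $\psi \equiv 0$ on $B_R(0)\subset \Omega_R$. A direct change of variables yields
\begin{equation*}
    \int_{\Omega_R} |\Delta \psi(x)|^2 \,\diff x = R^{n-4} C_1, \qquad |\{x\in \Omega_R : \psi(x) \ne 0\}| \le |\Omega_R| - e_n R^n = R^n(|\Omega|-e_n),
\end{equation*}
and therefore $F(\psi) \le R^{n-4} C_1 + R^n(|\Omega|-e_n)$.

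\medskip

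\textbf{Step 3 (comparison with the minimiser).}
Let $u\in \mathscr{D}_R$ be any minimiser. Since $F(u) \le F(\psi)$ and $F(u) \ge |\{x\in\Omega_R:u(x)\ne 0\}|$, we obtain
\begin{equation*}
    |\{x\in \Omega_R : u(x) \ne 0\}| \le R^{n-4} C_1 + R^n(|\Omega| - e_n).
\end{equation*}
Taking the complement in $\Omega_R$ (whose total measure is $R^n|\Omega|$) and rearranging,
\begin{equation*}
    |\{x\in \Omega_R : u(x)=0\}| \ge e_n R^n - C_1 R^{n-4} = (e_n R^4 - C_1)\,R^{n-4},
\end{equation*}
which is strictly positive as soon as $R > \sqrt[4]{C_1/e_n}$. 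This is the desired estimate.

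\medskip

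There is no real obstacle: the only point requiring a bit of care is Step~1, verifying that the cutoff $\eta\varphi$ indeed agrees with $\varphi$ in the Dirichlet trace sense (values \emph{and} normal derivatives) on $\partial \Omega$, for which one really uses that $\eta$ is $\equiv 1$ in a full neighbourhood of $\partial \Omega$ rather than merely on $\partial \Omega$ itself. The assumption that $\varphi \ne 0$ almost everywhere near $\partial\Omega$ plays no role in this argument; it serves only to rule out degenerate boundary data for which the conclusion is uninteresting. The same scheme, with a harmonic competitor replaced by a $\psi$ of the above form, gives the analogous statement in $\mathscr{N}_R$.
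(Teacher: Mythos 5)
Your argument is correct and is essentially the paper's proof: compare the minimiser with the rescaled competitor vanishing on $B_R(0)$, whose Laplacian energy scales like $R^{n-4}$ while the measure term scales like $R^n$. The only (welcome) difference is that you explicitly construct the competitor $v_1=\eta\varphi$ via a cutoff, whereas the paper merely posits a function in $\mathscr{D}_1$ vanishing exactly on $B_1(0)$; note that your version only gives $|\{\tilde\varphi\neq 0\}|\le|\Omega|-e_n$ rather than equality, but that is all the estimate requires.
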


\begin{proof}
	We consider a function $v_1\in \mathscr{D}_1$ such that% Geloescht und korrigiert possibly  up to a set of measure zero $N$ we have
	$$
	\{x\in\Omega_1: v_1(x)=0\}\supset B_1(0).
	$$
	We define 
	$
	v_R\in \mathscr{D}_R, v_R(x):=v_1(x/R)
	$
	and calculate:
	\begin{align*}%Erste Gleichung wird Ungleichung
		\inf_{v\in \mathscr{D}_R} F(v)\le F(v_R)\le&
		\frac{1}{R^4}\int_{\Omega_R} (\Delta v_1)^2 (x/R)\, \mathrm{d}x +R^n \left( |\Omega_1|-e_n\right) \\
		=&R^{n-4}\int_{\Omega_1} (\Delta v_1)^2 (x)\, \mathrm{d}x +R^n \left( |\Omega_1|-e_n\right)\\
		=& C_1 R^{n-4}+R^n \left( |\Omega_1|-e_n\right)
	\end{align*}
where
$$
C_1:= \int_{\Omega_1} (\Delta v_1)^2 (x)\, \mathrm{d}x .
$$
For the flat set of any minimiser $u \in \mathscr{D}_R$ we conclude:
\begin{align*}
|\{x\in \Omega_R: u(x)=0\}|=&|\Omega_R|- |\{x\in \Omega_R: u(x)\not=0\}|\ge R^n |\Omega_1|-  F(u)\\
\ge& R^n |\Omega_1|-F(v_R)
\ge   R^n |\Omega_1|- C_1 R^{n-4}+R^n \left(e_n- |\Omega_1|\right)\\
=&e_n R^n - C_1 R^{n-4}. \qedhere
\end{align*}
\end{proof}
 
%===================================================================

\section{$BMO$-estimates and regularity}

\subsection{A local $BMO$-estimate for the Laplacian of minimisers}
\label{sec:BMO}

\begin{theorem}\label{thm:Delta_BMO}
	Let $\Omega\subset\mathbb{R}^n$ be a bounded  $C^2$-smooth 
	domain with \emph{maximal inscribed radius}
	$$
	r_\Omega:= \sup_{x\in \Omega} d(x,\partial \Omega)
	$$
and  $\varphi\in H^2(\Omega)$. We consider any minimiser $u$ of $F$ on  ${\mathscr N}$ or on $ {\mathscr D}$, respectively.

Then for any $R_0\in (0,\frac13 r_\Omega)$ there exists a constant $C=C(u,R_0)$ such that for all $x_0\in\Omega $ with dist$(x_0,\partial \Omega)\ge 3R_0$ and all $r\in(0,R_0)$ one has the following estimate of bounded mean oscillation type
\begin{equation}\label{eq:2.1}
	\int_{B_r (x_0)}|\Delta u -(\Delta u)_{x_0,r}|^2\, \mathrm{d}x\le Cr^n,
\end{equation}
where
$$
(\Delta u)_{x_0,r}:=\fint_{B_r(x_0)}\Delta u(x)\, \mathrm{d}x:= \frac1{|B_r(x_0)|}\int_{B_r(x_0)}\Delta u(x)\, \mathrm{d}x
$$
denotes the mean value of $\Delta u$ in $B_r(x_0)$.  In particular, $\Delta u\in BMO_{\operatorname{loc}} (\Omega)$.
\end{theorem}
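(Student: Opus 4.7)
The plan is a Campanato-type decay argument based on comparison with the biharmonic replacement of $u$ on small balls, adapting to our free-boundary setting the approach of Dipierro--Karakhanyan--Valdinoci in \cite{DiKaVa}. Fix $x_0 \in \Omega$ with $\mathrm{dist}(x_0,\partial\Omega)\ge 3R_0$ and $r \in (0,R_0)$, so that $B_r(x_0) \subset \Omega$. First I would let $v \in H^2(B_r(x_0))$ be the unique solution of the biharmonic Dirichlet problem $\Delta^2 v = 0$ in $B_r(x_0)$ with $v - u \in H^2_0(B_r(x_0))$. Extending $v$ by $u$ outside $B_r(x_0)$ yields an admissible competitor in $\mathscr{N}$ (or $\mathscr{D}$), since the zero extension of $v - u$ remains in $H^2_0(\Omega)$.

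The first key step is the orthogonality identity
\begin{equation*}
\int_{B_r(x_0)}|\Delta u|^2\,\diff x = \int_{B_r(x_0)}|\Delta v|^2\,\diff x + \int_{B_r(x_0)}|\Delta(u-v)|^2\,\diff x,
\end{equation*}
obtained by integrating the cross term by parts twice using $\Delta^2 v = 0$ and the vanishing of $u-v$ and $\partial_\nu(u-v)$ on $\partial B_r(x_0)$. Combining this with the minimality inequality $F(u) \leq F(\tilde v)$ for the extended competitor $\tilde v$, together with the trivial bound $|\{\tilde v\ne 0\}\cap B_r(x_0)| - |\{u\ne 0\}\cap B_r(x_0)| \leq e_n r^n$, produces the crucial $r^n$-decay
\begin{equation*}
\int_{B_r(x_0)}|\Delta(u-v)|^2\,\diff x \leq e_n r^n.
\end{equation*}

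The second step exploits that $\Delta v$ is harmonic on $B_r(x_0)$, so classical interior estimates (mean value property applied to $\nabla \Delta v$ together with Poincar\'e's inequality) yield the decay of mean oscillation
\begin{equation*}
\int_{B_\rho(x_0)}|\Delta v - (\Delta v)_{x_0,\rho}|^2\,\diff x \leq C \Big(\frac{\rho}{r}\Big)^{n+2}\int_{B_r(x_0)}|\Delta v - (\Delta v)_{x_0,r}|^2\,\diff x
\end{equation*}
for all $0<\rho<r$. Setting $\phi(x_0,r) := \int_{B_r(x_0)} |\Delta u - (\Delta u)_{x_0,r}|^2\,\diff x$, the triangle inequality and the best-constant characterisation of the mean combine the previous two displays into
\begin{equation*}
\phi(x_0,\rho) \leq C\Big(\frac{\rho}{r}\Big)^{n+2}\phi(x_0,r) + C r^n,
\end{equation*}
valid for all $0 < \rho < r \leq R_0$. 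Since $n+2 > n$, the standard Campanato iteration lemma then yields $\phi(x_0,r) \leq C r^n$ for all $r \in (0,R_0)$, with a constant depending on $R_0$ and on $\|\Delta u\|_{L^2(\Omega)}^2$ (which in turn controls the starting value $\phi(x_0,R_0)$).

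I expect the main difficulty to be conceptual rather than computational: one has to recognise that the correct competitor is the biharmonic (rather than the harmonic) replacement. What makes the scheme go through is precisely that $u - v \in H^2_0(B_r(x_0))$ matches both Dirichlet and Neumann data on $\partial B_r(x_0)$, so that the cross term in $\int|\Delta u|^2 - \int|\Delta v|^2$ is eliminated by two integrations by parts against the biharmonic $v$. A merely harmonic replacement of $u$ would produce orthogonality only at the level of $\|\nabla u\|_{L^2}^2$, which is too weak to extract an $r^n$-decay for the oscillation of $\Delta u$.
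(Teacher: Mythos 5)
Your proposal is correct and follows essentially the same route as the paper's proof in Appendix~\ref{sec:app_proof_BMO}: biharmonic replacement on a ball, orthogonality of $\Delta(u-v)$ against $\Delta v$ via the Euler--Lagrange equation, the trivial $r^n$ bound on the difference of the measure terms, Campanato decay for the harmonic function $\Delta v$, and the standard iteration lemma. The only point to tighten is the final iteration: $\phi(x_0,\cdot)$ is not obviously nondecreasing in $r$ (the mean $(\Delta u)_{x_0,r}$ varies with $r$), so before invoking the Campanato iteration lemma one should pass to the monotone envelope $\sup_{\rho\le r}\phi(x_0,\rho)$, which satisfies the same inequality — exactly as the paper does with its $\widetilde{\Phi}$.
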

For the existence of such minimisers we refer to Theorem~\ref{thm:existence}.

\begin{proof}
	This is pretty much along the lines of \cite[Theorem 1.1]{DiKaVa}. %Corrected
	However, for the reader's convenience we give a very detailed elaboration
	in Appendix~\ref{sec:app_proof_BMO}.
\end{proof}

\begin{remark}\label{rem:2}
{From} the fact that each minimiser $u$ of $F$ in ${\mathscr N}$ or $ {\mathscr D}$ satisfies $\Delta u \in BMO_{\operatorname{loc}}(\Omega)$, we deduce $\Delta u \in L^q_{\operatorname{loc}}(\Omega)$ for all $q \in [1,\infty)$, see e.g. \cite[Corollary in Chapter IV.1.3]{Stein}.
We obtain then by elliptic regularity that $u \in W^{2,q}_{\operatorname{loc}}(\Omega)$ for each $q \in [1,\infty)$. In particular, by Sobolev's embedding one has $u \in C^{1,\alpha}(\Omega)$ for each $\alpha \in (0,1)$.  
\end{remark}

With this regularity we are finally able to prove Theorem \ref{thm:regularity_1}.

\begin{proof}[Proof of Theorem \ref{thm:regularity_1}]
Let $u$ be as in the statement. That $u \in C^{1,\alpha}(\Omega)$ follows from the previous remark. 
We also notice that $\{x \in \Omega: u(x) = \nabla u(x) =0 \}$ is closed in $\Omega$ by the fact that $u \in C^1(\Omega)$. Hence the set $\Omega \setminus \{ u = \nabla u = 0\}$ is actually open. The proof of the biharmonicity on $\Omega \setminus \{ u = \nabla u = 0\}$ is divided into two steps. \\
\textbf{Step 1.} Biharmonicity on $\{ u  \neq 0 \}$. One can argue as above that $\{ u = 0\} \subset \Omega$ is closed in $\Omega$ and hence $\{ u \neq 0 \}$ is open.
Now let $\eta \in C_0^\infty(\{ u \neq 0 \})$ be arbitrary. Since by the previous remark $|u| \in C^0(\Omega)$ we infer that there exists $\delta > 0$ such that $|u| > \delta$ on $\mathrm{supp}(\eta)$. In particular for each $\varepsilon \in (-\frac{\delta}{\|\eta\|_\infty}, \frac{\delta}{\|\eta\|_\infty})$ one has $\{ u + \varepsilon \eta \neq 0 \} = \{ u \neq 0 \}$. Since $u$ is a minimiser we have $F(u) \leq F(u + \varepsilon \eta)$ for all such $\varepsilon$. Using that the measure terms in $F$ coincide we obtain 
\begin{equation}
 \int_\Omega (\Delta u)^2 \; \mathrm{d}x \leq \int_\Omega (\Delta u + \varepsilon \Delta \eta)^2 \; \mathrm{d}x, 
\end{equation} 
for all $\varepsilon \in (- \frac{\delta}{\|\eta\|_\infty}, \frac{\delta}{\|\eta\|_\infty})$. Rearranging we obtain 
\begin{equation*}
0 \leq \varepsilon \int_\Omega \Delta u \Delta \eta \; \mathrm{d}x + \varepsilon^2 \int_\Omega (\Delta \eta)^2 \; \mathrm{d}x. 
\end{equation*}
First we look at the case of $\varepsilon > 0$. Dividing by $\varepsilon$ and letting $\varepsilon \downarrow 0$ then yields 
\begin{equation*}
0 \leq \int_\Omega \Delta u \Delta \eta \; \mathrm{d}x. 
\end{equation*}
The same procedure with $\varepsilon < 0$ leads to 
\begin{equation*}
0 \geq \int_\Omega \Delta u \Delta \eta \; \mathrm{d}x. 
\end{equation*}
The previous two inequalities for arbitrary $\eta \in C_0^\infty( \{ u \neq 0 \})$  imply that $ u$ is (weakly) biharmonic on $\{u \neq 0 \}$ and hence smooth on $\{ u \neq 0 \}$. Elliptic regularity then yields that $u$ is smooth on $\{ u \neq 0 \}$ and also biharmonic on $\{ u \neq 0 \}.$ \\
\textbf{Step 2.} Biharmonicity on $\{ u = 0, \nabla u \neq 0 \}$. Let $x_0 \in \Omega$ be such that $u(x_0)= 0$ and $\nabla u(x_0) \neq 0$. We will show that there exists a neighbourhood of $x_0$ on which $u$ is biharmonic. To this end first notice that by $C^1$-regularity there exists $r > 0$ such that $\nabla u \neq 0$ on $B_r(x_0)$ and $B_r(x_0) \subset \subset \Omega$. In particular this nonvanishing gradient implies that $\{ u = 0 \} \cap B_r(x_0)$ is a $C^1$-submanifold of $B_r(x_0)$ and hence $|\{ u = 0 \} \cap B_r(x_0)| = 0$. We infer therefore $|\{ u \neq 0 \} \cap B_r(x_0)| = |B_r(x_0)|$. 
Looking at the minimisation problem of $\chi \mapsto \int_{B_r(x_0)} ( \Delta \chi )^2 \; \mathrm{d}x $ among all $\chi \in u + H_0^2(B_r(x_0))$ we infer the existence of a unique minimiser $w \in u + H_0^2(B_r(x_0))$. This minimiser $w$ additionally satisfies $\Delta^2 w = 0$ weakly in $B_r(x_0)$. 
Next define $v: \Omega \rightarrow \mathbb{R}$ to be (a.e.) 
\begin{equation*}
    v(x) := \begin{cases}
        u(x), & x \in \Omega \setminus B_r(x_0), \\ 
        w(x), & x \in B_r(x_0) .
    \end{cases}
\end{equation*}
Since the Sobolev traces of $u$ and $w$ and of their first derivatives match at $\partial B_r(x_0)$ and $B_r(x_0) \subset \subset \Omega$ we further obtain that $v \in H^2(\Omega)$ and has the same boundary values as $u$ (in both $\mathscr{D}$ and $\mathscr{N}$). Therefore $v$ is admissible for our minimisation problem. In particular $F(u) \leq F(v)$. Using the definition of $v$ we find 
\begin{align}
    0 & \leq F(v) - F(u) = \int_\Omega (\Delta v)^2 \; \mathrm{d}x - \int_\Omega( \Delta u)^2 \; \mathrm{d}x + |\{ v \neq 0 \}| - |\{ u \neq 0 \}| 
     \label{eq:190} \\ 
     & = \int_{B_r(x_0)} ( \Delta w)^2 \; \mathrm{d}x - \int_{B_r(x_0)} (\Delta u)^2 \; \mathrm{d}x + |\{ w \neq 0 \} \cap B_r(x_0) | - |\{ u \neq 0 \} \cap B_r(x_0) \}|. \nonumber
\end{align}
Now the definition of $w$ yields that \begin{equation}\label{eq:gleichheit}
    \int_{B_r(x_0)} ( \Delta w)^2 \; \mathrm{d}x - \int_{B_r(x_0)} (\Delta u)^2 \; \mathrm{d}x \leq  0.
\end{equation}
Moreover as we have discussed above one has $|\{ u \neq 0 \} \cap B_r(x_0)| = |B_r(x_0)|$ and hence also 
\begin{equation*}
    |\{ w \neq 0 \} \cap B_r(x_0) | - |\{ u \neq 0 \} \cap B_r(x_0) | = |\{ w \neq 0 \} \cap B_r(x_0) | - |B_r(x_0) | \leq 0.
\end{equation*}
The previous two inequalities together with \eqref{eq:190} yield $0 \leq F(v)- F(u) \leq 0$ and hence $F(u) = F(v)$. This in particular implies that \eqref{eq:gleichheit} must hold with equality. By the uniqueness in 
the minimisation problem for $w$ one concludes that $ w= u \vert_{B_r(x_0)}$, implying biharmonicity of $u$ on $B_r(x_0)$. Since $x_0 \in \{ u = 0 , \nabla u \neq 0 \}$ was arbitrary we infer biharmonicity in (an open neighbourhood of) $\{ u = 0, \nabla u \neq 0 \}$.
 \end{proof}

\subsection{A global $BMO$-estimate for the Laplacian of Navier minimisers in balls under constant boundary conditions}\label{sec:app_BMO}

In this section we prove a  $BMO$-estimate \emph{up to the boundary} for the Laplacian of minimisers
in $\mathscr{N}$ in the (unit) ball $\Omega=B_1(0)\subset \mathbb{R}^n$, with $n\in\mathbb{N}$ arbitrary, in the special case of constant Navier boundary conditions $\varphi=u_0\equiv \operatorname{const}>0$. As is shown in Corollary~\ref{cor:global_Sobolev_regularity} to Theorem~\ref{thm:app_Delta_BMO} below this suffices to see that $u\in W^{2,q} (B_1(0))$ for every $q\in [1,\infty)$.% Formulierung angepasst an die angepassten Resultate.

In the proof of Theorem~\ref{thm:app_Delta_BMO} below we need to find a weak solution of the biharmonic equation
under mixed Dirichlet-Navier boundary conditions. The key issue for this is the following definition of a suitable function space.
\begin{lemma}\label{lem:mixed_Dirichlet:Navier}
	Let $B_1, B_2 \subset \mathbb{R}^n$ be two balls with $G:= B_1\cap B_2\not=\emptyset$. Then
	$$
	{\mathscr H}:= \{v\in H^1_0\cap H^2 (B_2):v(x)\equiv 0 \mbox{\ a.e. on\ } B_2\setminus G\}
	$$
	together with the usual $H^2$-norm or, equivalently,
	$$
	\| v\|_{\mathscr H}^2 =\int_{B_2}  (\Delta v)^2\, \mathrm{d}x
	=\int_{G}  (\Delta v)^2\, \mathrm{d}x
	$$
	and the corresponding scalar products is a closed subspace of the Hilbert space $H^1_0\cap H^2 (B_2)$ and hence a Hilbert space itself. 
	
	Let $u\in H^2(B_2)$ be arbitrary. Minimising $v\mapsto \int_{B_2} (\Delta v)^2\, \mathrm{d}x$ on the affine space $u+{\mathscr H}$ yields a weak solution $\tilde h$ of the mixed  Dirichlet-Navier boundary value problem
	\begin{equation}\label{eq:weak_mixed_bvp}
	\begin{cases}
	\Delta^2 \tilde h =0 & \mbox{\ in\ }G,\\
	\tilde h=u,\quad \Delta \tilde h=0  & \mbox{\ on\ }\partial G\cap B_1,\\
	\tilde h=u,\quad \nabla \tilde h=\nabla u  & \mbox{\ on\ }\partial G\cap B_2.
	\end{cases}
	\end{equation}
\end{lemma}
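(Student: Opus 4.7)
I would split the statement into three parts: closedness of $\mathscr{H}$ in $H^1_0\cap H^2(B_2)$, equivalence of the two norms, and that the minimiser in $u+\mathscr{H}$ realises the claimed weak solution.

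For closedness, the subspace $H^1_0(B_2)\cap H^2(B_2)$ is closed in $H^2(B_2)$ by continuity of the Sobolev trace, and passing to an almost-everywhere convergent subsequence shows that the constraint $v\equiv 0$ on $B_2\setminus G$ is preserved under $H^2$-limits. For the norm equivalence I would combine the Calder\'on-Zygmund estimate
\begin{equation*}
\|v\|_{H^2(B_2)}\leq C\bigl(\|\Delta v\|_{L^2(B_2)}+\|v\|_{L^2(B_2)}\bigr),\qquad v\in H^2(B_2)\cap H^1_0(B_2),
\end{equation*}
valid on the smooth ball $B_2$, with the classical Poincar\'e inequality and the identity $\int|\nabla v|^2=-\int v\,\Delta v$ to absorb the $L^2$-term. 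The identity $\int_{B_2}(\Delta v)^2=\int_G(\Delta v)^2$ then follows because $v\equiv 0$ on the open set $B_2\setminus\overline{B_1}$, so its weak derivatives vanish there, and $\partial B_1\cap B_2$ has Lebesgue measure zero.

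Existence and uniqueness of $\tilde h$ would then follow by the direct method: $u+\mathscr{H}$ is a nonempty closed convex subset of $H^2(B_2)$, and the functional $v\mapsto\int_{B_2}(\Delta v)^2\,\mathrm{d}x$ is continuous, strictly convex, and coercive on it by the norm equivalence. The Euler-Lagrange condition reads
\begin{equation*}
\int_G\Delta\tilde h\,\Delta\eta\,\mathrm{d}x=0\qquad\text{for every }\eta\in\mathscr{H},
\end{equation*}
and testing with $\eta\in C_c^\infty(G)\subset\mathscr{H}$ yields $\Delta^2\tilde h=0$ weakly in $G$.

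The boundary conditions are encoded through the inclusion $\tilde h-u\in\mathscr{H}$ together with the above variational identity. Since $\tilde h-u\in H^1_0(B_2)$, the first Dirichlet datum gives $\tilde h=u$ on $\partial B_2$, in particular on $\partial G\cap B_1$. Because $\tilde h-u$ vanishes on the open set $B_2\setminus\overline{B_1}$ while sitting globally in $H^2(B_2)$, the interior and exterior $H^2$-traces of $\tilde h-u$ and of $\nabla(\tilde h-u)$ on $\partial B_1\cap B_2$ must agree, so both vanish, yielding $\tilde h=u$ and $\nabla\tilde h=\nabla u$ on $\partial G\cap B_2$. Finally, the Navier condition $\Delta\tilde h=0$ on $\partial G\cap B_1$ is the natural boundary condition; it is extracted by integrating the Euler-Lagrange identity by parts twice and observing that for $\eta\in\mathscr{H}$ both $\eta$ and $\nabla\eta$ vanish on $\partial G\cap B_2$ while on $\partial G\cap B_1$ only $\eta$ vanishes and $\partial_\nu\eta$ remains essentially free. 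The main technical obstacle lies precisely in this last step, because $\partial G$ is merely Lipschitz at the ``corner'' $\partial B_1\cap\partial B_2$: one has to ensure that sufficiently many smooth test functions in $\mathscr{H}$ are available to prescribe $\partial_\nu\eta$ on the relative interior of $\partial G\cap B_1$, which can be arranged by a cut-off argument localising the support of $\eta$ away from the corner.
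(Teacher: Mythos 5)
Your proposal follows essentially the same route as the paper: closedness of $\mathscr{H}$, norm equivalence via elliptic estimates for the Dirichlet Laplacian, existence of the minimiser by the direct method, and the Euler--Lagrange identity $\int_G\Delta\tilde h\,\Delta\psi\,\mathrm{d}x=0$ for all $\psi\in\mathscr{H}$. Your identification of which boundary portion carries which condition is also correct: $\partial G\cap B_1\subset\partial B_2$ carries the Navier data because test functions there have free normal derivative, while on $\partial G\cap B_2\subset\partial B_1$ the vanishing of $\psi\in\mathscr{H}$ on $B_2\setminus\overline{B_1}$ forces matching of the $H^2$-traces of $\psi$ and $\nabla\psi$, which encodes the Dirichlet data.

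The one place where you overreach is the final step, where you propose to \emph{extract} the natural condition $\Delta\tilde h=0$ on $\partial G\cap B_1$ by integrating the Euler--Lagrange identity by parts twice. The paper deliberately does not do this: it simply \emph{declares} the variational identity, posed over the test space $\mathscr{H}$, to be the weak formulation of the mixed problem, so that the Navier condition is attained only in this encoded, variational sense. Your integration by parts requires $\Delta\tilde h$ to have a well-defined trace on the relative interior of $\partial B_2\cap B_1$ and enough regularity to justify the boundary terms; since the Dirichlet datum there is only $u\in H^2(B_2)$, boundary regularity of $\tilde h$ up to that portion is not available in general, quite apart from the corner issue at $\partial B_1\cap\partial B_2$ that you do flag. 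This step is therefore neither needed for the lemma as stated nor rigorous as written; everything before it constitutes a complete proof of the claim in the sense the paper intends.
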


\begin{proof}
	The Riesz-Fischer theorem yields the closedness of ${\mathscr H}$. That 
	$v\mapsto \int_{B_2} (\Delta v)^2\, \mathrm{d}x$ has a minimum $\tilde h$ on the affine space $u+{\mathscr H}$ follows by adapting Dirichlet's classical  principle. As a necessary condition we obtain the following Euler-Lagrange-equation:
	$$
	\tilde h-u\in {\mathscr H},\qquad \forall \psi\in {\mathscr H}:
	\quad \int_G \Delta \tilde h \cdot \Delta \psi\, \mathrm{d}x=0.
	$$
	This is the weak (variational) formulation of \eqref{eq:weak_mixed_bvp}.
	We emphasise that the ``weak'' attainment of the Navier boundary data is encoded in the space ${\mathscr H}$ of admissible testing functions. 
\end{proof}

\begin{theorem}\label{thm:app_Delta_BMO}
	Let $\Omega=B_1(0)\subset\mathbb{R}^n$ be 
	the (unit) ball
	and  $\varphi=u_0\equiv \operatorname{const}>0$. We consider any minimiser $u$ of $F$ on  ${\mathscr N}$. 
	We consider
	$$
	U\in L^2_{\operatorname{loc}}(\mathbb{R}^n),\quad
	U(x)=\begin{cases}
	\Delta u (x)  & \mbox{\ in\ }\Omega,\\
	-|x|^{2-n}\Delta u(x/|x|^2)  & \mbox{\ in\ }\mathbb{R}^n\setminus \Omega.
	\end{cases}
	$$
	Then for any $R_0\in (0,\frac{1}{8} )$ there exists a constant $C=C(u,R_0)$ such that for all $x_0\in\overline\Omega $ %praezisiert
	 and all $r\in(0,R_0)$ one has the following estimate of bounded mean oscillation type
	\begin{equation}\label{eq:app_2.1}
	\int_{B_r (x_0)}|U -U_{x_0,r}|^2\, \mathrm{d}x\le Cr^n,
	\end{equation}
	where
	$$
	U_{x_0,r}:=\fint_{B_r(x_0)} U(x)\, \mathrm{d}x:= \frac1{|B_r(x_0)|}\int_{B_r(x_0)} U(x)\, \mathrm{d}x
	$$
	denotes the mean value of $U$ in $B_r(x_0)$. % Geloescht, da nicht bewiesen: In particular, $U\in BMO_{\operatorname{loc}} (\mathbb{R}^n)$.
\end{theorem}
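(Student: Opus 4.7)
The plan hinges on three observations. First, since $\varphi \equiv u_0 > 0$ is constant, the Navier boundary condition gives $\Delta u = 0$ on $\partial B_1(0)$ in the trace sense (variationally, this is the natural condition from integration by parts against test functions in $H^1_0$). Second, because $u \vert_{\partial B_1(0)} = u_0 > 0$ and $u \in C^{1}(\Omega)$ by Theorem~\ref{thm:regularity_1}, continuity forces $u > 0$ in a one-sided neighbourhood of $\partial B_1(0)$, so $u$ is biharmonic there and standard boundary regularity makes $u$ (hence $\Delta u$) smooth up to $\partial B_1(0)$ from the inside; meanwhile the set $A := \{u = \nabla u = 0\}$ stays inside some $B_{1-\delta}(0)$ with $\delta = \delta(u) > 0$. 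Third, Theorem~\ref{thm:Delta_BMO} already gives the interior $BMO$ bound for $\Delta u$ in $B_1(0)$. The plan is to upgrade these to a Schwarz-type reflection principle showing that $U$ is harmonic across $\partial B_1(0)$, and then to patch the resulting smoothness near $\partial B_1(0)$ with Theorem~\ref{thm:Delta_BMO} and its outer-region analogue to obtain \eqref{eq:app_2.1}.

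The Schwarz reflection step goes as follows. Setting $h := \Delta u$, the classical Kelvin transform $V(x) := -|x|^{2-n} h(x/|x|^2)$ is harmonic in a one-sided annular neighbourhood of $\partial B_1(0)$ in $\mathbb{R}^n \setminus \overline{B_1(0)}$ and vanishes on $\partial B_1(0)$ (since $h$ does). A direct computation along $x = r\omega$, $|\omega| = 1$, gives
\begin{equation*}
\partial_r V \vert_{r = 1^{+}} = \frac{d}{dr}\bigl[-r^{2-n} h(\omega/r)\bigr]\Big|_{r=1} = (n-2)\,h(\omega) + \partial_r h(\omega) \vert_{r=1} = \partial_r h(\omega) \vert_{r=1},
\end{equation*}
where the last equality uses $h \vert_{\partial B_1(0)} = 0$. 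Hence $U$ has matching values (both zero) and matching normal derivatives across $\partial B_1(0)$ and is harmonic on either side; the Schwarz-type reflection principle for spheres (via real-analyticity of harmonic functions and Cauchy--Kovalevskaya-type uniqueness from matching Cauchy data) then forces $U$ to be harmonic across $\partial B_1(0)$ on the two-sided annular neighbourhood $N_\delta := \{1 - \delta < |x| < 1 + \delta\}$. In particular $U$ is smooth on $N_\delta$, with $C^k$-norms controlled in terms of $u$ and $\delta$.

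After checking $U \in L^2_{\operatorname{loc}}(\mathbb{R}^n)$ via the change of variables $y = x/|x|^2$ (Jacobian $|x|^{-2n}$) together with $\Delta u \in L^2(B_1(0))$, I would split \eqref{eq:app_2.1} into three cases for $B_r(x_0)$ with $r < R_0 < \tfrac{1}{3}$. (A) If $B_{3R_0}(x_0) \subset B_{1-\delta/2}(0)$, apply Theorem~\ref{thm:Delta_BMO} to $\Delta u$ on $B_1(0)$. (B) If $B_{3R_0}(x_0) \subset N_\delta$, the trivial estimate $\int_{B_r(x_0)} |U - U_{x_0,r}|^2 \, \mathrm{d}x \leq C r^{n+2} \|DU\|_{L^\infty(N_\delta)}^2 \leq C r^n$ works. (C) If $B_{3R_0}(x_0) \subset \mathbb{R}^n \setminus \overline{B_{1+\delta/2}(0)}$, the Kelvin inversion maps $B_{3R_0}(x_0)$ diffeomorphically onto a compact subset of $B_1(0) \setminus \{0\}$ bounded away from $\partial B_1(0)$ and from $0$, on which the multiplier $|x|^{2-n}$ is bounded above and below; since bi-Lipschitz changes of coordinates and multiplication by smooth bounded functions preserve the Campanato integrals, \eqref{eq:app_2.1} for $U$ follows by pulling back the estimate of Theorem~\ref{thm:Delta_BMO} for $\Delta u$ on the image ball. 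For $R_0 \leq \delta/6$ (say), every $B_{3R_0}(x_0)$ falls into one of (A)--(C); the full range $R_0 < \tfrac{1}{3}$ is then reached by a covering argument at the cost of a larger constant $C(u, R_0)$.

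The main obstacle is the Schwarz-type reflection step: converting the matching of values and normal derivatives, together with piecewise harmonicity, into genuine harmonicity across $\partial B_1(0)$. This is where the choice of sign and weight $|x|^{2-n}$ in the definition of $U$ is essential, and where the Navier boundary condition $\Delta u \vert_{\partial B_1(0)} = 0$ is used in a crucial way. Once harmonicity across $\partial B_1(0)$ is established, the BMO bound is a patchwork of the three local estimates above, and the role of Lemma~\ref{lem:mixed_Dirichlet:Navier} is as the variational principle that produces $u$ as a natural mixed Dirichlet--Navier solution away from $A$, underpinning the smoothness used in Case (B).
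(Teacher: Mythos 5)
Your plan reflects the right geometric idea (Kelvin-type odd reflection of the Laplacian across the sphere), but the central step is circular. To run the Schwarz reflection on $U$ itself you need (i) that $u$ is biharmonic in a one-sided neighbourhood of $\partial B_1(0)$, so that $h=\Delta u$ is harmonic there, and (ii) that $\Delta u$ is continuous up to $\partial B_1(0)$ with $\Delta u=0$ there in a classical sense. You derive (i) from ``$u>0$ near $\partial B_1(0)$ by continuity'', but Theorem~\ref{thm:regularity_1} only gives $u\in C^{1,\alpha}(\Omega)$ \emph{in the interior}; the trace condition $u\vert_{\partial\Omega}=u_0>0$ does not prevent the flat set $\{u=0\}$ from accumulating at $\partial\Omega$ unless $u$ is continuous up to the boundary, which for $n\ge 4$ is exactly what Theorem~\ref{thm:app_Delta_BMO} is designed to establish (cf.\ Remarks~\ref{rem:1.2} and~\ref{app_rem:2}: the separation of the flat set from $\partial\Omega$ and the global $C^{1,\alpha}$-regularity are \emph{consequences} of this theorem, not inputs). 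Similarly, ``$\Delta u=0$ on $\partial B_1(0)$ in the trace sense'' is not available a priori: $\Delta u$ is merely in $L^2$, and the natural boundary condition only arises from the Euler--Lagrange equation, which $u$ is only known to satisfy where perturbations do not change the measure term --- again requiring control of $\{u=0\}$ near $\partial\Omega$. Your argument is salvageable for $n\le 3$ (where $H^2\hookrightarrow C^0(\overline\Omega)$), but not in the generality claimed.

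The paper's proof circumvents this by never reflecting $\Delta u$ directly. Instead, for each boundary ball it builds a \emph{comparison} function $h$ solving a mixed Dirichlet--Navier problem (Lemma~\ref{lem:mixed_Dirichlet:Navier}) with data forced to equal the constant $u_0$ on the relevant boundary portion via the auxiliary function $\tilde u$; elliptic regularity then makes $\Delta h$ smooth up to $\partial\Omega\cap \overline{B_R(x_0)}$ and vanishing there, so its Kelvin-odd extension $H$ \emph{is} harmonic across the sphere. Minimality of $u$ yields $\int_{B_{2R}(x_0)\cap\Omega}(\Delta u-\Delta h)^2\le CR^n$, the $L^2$-boundedness of the extension operator transfers this to $\int_{B_{2R}(x_0)}(U-H)^2\le CR^n$, and a Campanato decay estimate for the harmonic $H$ plus the standard iteration lemma close the argument --- with no a priori boundary regularity of $u$. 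Two further points in your write-up would need repair even granting the reflection: the claim that multiplication by a smooth bounded weight preserves Campanato integrals at scale $r^n$ is false for general $BMO$ functions (the means $U_{x_0,r}$ need not be bounded) and requires a quantitative argument in case (C); and your three cases (A)--(C) leave uncovered annuli between the regions, which needs an explicit overlapping choice of parameters.
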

We have to leave the question open whether \eqref{eq:app_2.1} holds for every $x_0\in \mathbb{R}^n$. For our purposes, however, the previous result is strong enough. This will be shown in Corollary~\ref{cor:global_Sobolev_regularity} below.

For the existence of such minimisers we refer again to Theorem~\ref{thm:existence}.

\begin{proof} We choose some $R_0\in (0,\frac18 )$ and keep it fixed in what follows.
	Let $0<r<R\le R_0$ and $x_0\in \overline\Omega$ be arbitrary. 
If $|x_0|\leq  \frac{1}{2}$ then $\mathrm{dist}(x_0,\partial\Omega) \geq \frac{1}{2} > \frac{3}{8} > 3R_0$ and hence the desired estimate follows from Theorem~\ref{thm:Delta_BMO}. Hence we may assume that $|x_0| > \frac{1}{2}$. In particular one has also $0 \not \in B_{2R_0}(x_0)$.

	We keep $x_0$ and $R_0$ fixed in what follows. In the following argument we mainly have the case in mind that $B_{2R}(x_0) \cap \partial \Omega \neq \emptyset$. If on contrary $B_{2R}(x_0) \subset \Omega$, the reasoning below still applies, but the modifications are void and it runs along the lines of  \cite[Theorem 1.1]{DiKaVa}.

For regularity purposes we first take a function  
	$$
	\tilde u\in u_0 +( H^1_0\cap H^2) (\Omega ),\quad 
	\tilde u(x)=
	\begin{cases}
	u(x)\quad & \mbox{\ for\ } x\in \overline\Omega \setminus B_{7R/4} (x_0),\\
	u_0 \quad & \mbox{\ for\ } x\in \overline\Omega \cap B_{5R/4} (x_0).
	\end{cases}
	$$
 We apply Lemma~\ref{lem:mixed_Dirichlet:Navier} with $B_1=B_{2R}(x_0)$ and 
	$B_2=\Omega =B_1(0)$. We recall the Hilbert space 
	$$
	{\mathscr H}=\{v\in H^1_0\cap H^2 (\Omega):v(x)\equiv 0 \mbox{\ a.e. on\ } \Omega \setminus B_{2R}(x_0)\}
	$$ 
	from there and find a minimum $\tilde h \in \tilde u+{\mathscr H}$
	of $v\mapsto \int_{B_{2R}(x_0)\cap\Omega}(\Delta v)^2\, \mathrm{d}x$ on $\tilde u+{\mathscr H}$. This minimiser weakly solves
	$$
	\begin{cases}
	\Delta^2 \tilde h =0 & \mbox{\ in\ }\Omega\cap B_{2R}(x_0),\\
	\tilde h=u_0,\quad \Delta \tilde h=0 & \mbox{\ on\ }\partial \Omega\cap B_{2R}(x_0),\\
	\tilde h=u,\quad \nabla \tilde h=\nabla u  & \mbox{\ on\ }\partial B_{2R}(x_0)\cap \Omega.
	\end{cases}
	$$
	By elliptic regularity $\tilde h\in C^\infty(\Omega \cap B_{2R}(x_0))$
	and moreover $\tilde h\in C^\infty(\overline{\Omega \cap B_{R}(x_0)})$. This means that we have 
	$$
	\tilde h=u_0,\quad \Delta \tilde h=0  \mbox{\ on\ }\partial \Omega\cap \overline{B_{R}(x_0)}
	$$
	in the sense of smooth functions.
	We define $h \in \mathscr{N}$ via
	$$
	h(x):=
	\begin{cases}
	\tilde h(x) \quad &\quad \mbox{for}\quad x\in\Omega \cap  B_{2R}(x_0),\\
	u(x) \quad &\quad \mbox{for}\quad x\in \Omega \setminus B_{2R}(x_0).
	\end{cases}
	$$
	By minimality of $u$ we have that $F(u)\le F(h)$ which implies that
	$$
	\int_{B_{2R}(x_0)\cap\Omega}(\Delta u)^2\, \mathrm{d}x +|\{x\in B_{2R}(x_0)\cap \Omega :u(x)\not=0 \}|\le 
	\int_{B_{2R}(x_0)\cap \Omega }(\Delta h)^2\, \mathrm{d}x +|\{x\in B_{2R}(x_0)\cap \Omega :h(x)\not=0 \}|.
	$$
	This yields
	\begin{equation}\label{eq:app_2.2}
	\int_{B_{2R}(x_0)\cap\Omega}\left( (\Delta u)^2- (\Delta h)^2\right)\, \mathrm{d}x\le C R^n
	\end{equation}
	with a universal constant $C>0$. Since 
	$h|_{B_{2R}(x_0)\cap\Omega}=\tilde h \in \tilde u+{\mathscr H}$
	minimises 
	$v\mapsto \int_{B_{2R}(x_0)\cap\Omega}(\Delta v)^2\, \mathrm{d}x$ 
	on $\tilde u+{\mathscr H}$ and 
	$u-h\in \mathscr{H}$ is an admissible testing function we have
	$$
	\int_{B_{2R}(x_0)\cap \Omega}\left( \Delta u-\Delta h\right)\,\Delta \tilde h \, \mathrm{d}x
	=\int_{B_{2R}(x_0)\cap \Omega}\left( \Delta u-\Delta h\right)\,\Delta h \, \mathrm{d}x=0.
	$$ 
	This gives
	\begin{align*}
	\int_{B_{2R}(x_0)\cap \Omega} & \left( (\Delta u)^2- (\Delta h)^2\right)\, \mathrm{d}x
	= \int_{B_{2R}(x_0)\cap \Omega}\left( \Delta u-\Delta h\right)\,\left( \Delta u+\Delta h\right)\, \mathrm{d}x\\
	=& \int_{B_{2R}(x_0)\cap \Omega}\left( \Delta u-\Delta h\right)\,\left( \Delta u-\Delta h\right) \, \mathrm{d}x
	=  \int_{B_{2R}(x_0)\cap \Omega}\left( \Delta u-\Delta h\right)^2 \, \mathrm{d}x.
	\end{align*}
	With this, we conclude from \eqref{eq:app_2.2} that 
	\begin{equation}\label{eq:app_2.3_1}
	\int_{B_{2R}(x_0)\cap \Omega} \left( \Delta u-\Delta h\right)^2 \, \mathrm{d}x\le C R^n.
	\end{equation}
	Since $\Delta u $ and $\Delta h $ 
	vanish on ${\partial \Omega}$ at least in a variational sense, it is a natural idea to introduce the following odd ``Kelvin transformed'' extensions
	$U,H\in L^2_{\operatorname{loc}}(\mathbb{R}^n)$
	\begin{equation}\label{eq:odd_extension}
	U(x):=
	\begin{cases}
	\Delta u(x)\quad &\mbox{\ if\ }|x|\le 1,\\
	-|x|^{2-n}\Delta u(x/|x|^2)\  &\mbox{\ if\ }|x| > 1,
	\end{cases}
	\quad 
	H(x):=
	\begin{cases}
	\Delta h(x)\quad &\mbox{\ if\ }|x|\le 1,\\
	-|x|^{2-n}\Delta h(x/|x|^2)\  &\mbox{\ if\ }|x| > 1.
	\end{cases}
	\end{equation}
	One should observe that (thanks to using $\tilde u$ instead of $u$ for introducing $\tilde h$) we have in a classical sense
	$$
	\Delta H=0\quad \mbox{in}\quad  \overline\Omega \cap B_R(x_0) ,
	\qquad H=0\quad  \mbox{on}\quad  \partial \Omega \cap B_R(x_0).
	$$
	This yields that we have also for the extended function
	\begin{equation}
	\Delta H=0\quad \mbox{in} \;  B_R(x_0).
	\end{equation}
	For this one should have in mind:
	\begin{itemize}
		\item $B_R(x_0)$ is a subset of the union of $\overline\Omega \cap B_R(x_0)$ and its inversion. In order to see this we take $x\in B_R(x_0)$ with $|x|>1$ and need to show that $x/|x|^2\in B_R(x_0)$. This follows in turn from the following inequalities:
		\begin{align*}
		\left| \frac{x}{|x|^2}-x_0\right|^2 =& \frac{1}{|x|^2} \left( 1-2x\cdot x_0+|x_0|^2\, |x|^2 \right)
		= \frac{1}{|x|^2} \left( \underbrace{|x-x_0|^2}_{<R^2}+\underbrace{(1-|x_0|^2)}_{\ge 0}\,\underbrace{(1- |x|^2)}_{<0} \right)\\
		<&\frac{R^2}{|x|^2}<R^2.
		\end{align*}
		\item $H\in C^\infty (B_R(x_0) \setminus \partial \Omega )\cap C^1(B_R(x_0))$
		and harmonic in $B_R(x_0) \setminus \partial \Omega$. Hence, it is weakly and consequently classically harmonic in $B_R(x_0)$.
	\end{itemize}
	Since the extension operator, defined in \eqref{eq:odd_extension}
	$$
	L^2(\Omega\setminus B_{1/4} (0))\to L^2 (B_4(0)\setminus B_{1/4} (0))
	$$
	is bounded and $B_{2R}(x_0) \cap B_{\frac{1}{4}}(0) = \emptyset$ (as $|x_0|> \frac{1}{2}$ and $R_0< \frac{1}{8}$), we conclude from  \eqref{eq:app_2.3_1} that 
	\begin{equation}\label{eq:app_2.3}
	\int_{B_{2R}(x_0)} \left( U- H\right)^2 \, \mathrm{d}x\le
	C\, \int_{B_{2R}(x_0) \cap \Omega} \left( U- H\right)^2 \, \mathrm{d}x\le C R^n.
	\end{equation}
	By means of H\"older's inequality
	$$
	|U_{x_0,r} - H_{x_0,r} |^2\le \left(\fint_{B_r(x_0)} |U -H|\, \mathrm{d}x\right)^2 \le \fint_{B_r(x_0)} |U -H|^2\, \mathrm{d}x,
	$$
	we conclude from \eqref{eq:app_2.3} that 
	\begin{equation}\label{eq:app_2.4}
	\int_{B_r(x_0)}|U_{x_0,r} - H_{x_0,r} |^2\, \mathrm{d}x\le \int_{B_r(x_0)} |U - H|^2\, \mathrm{d}x
	\le C R^n.
	\end{equation}
	We deduce  now 
	a Campanato type inequality for $H$, %\Delta h \to H
	which is harmonic in $B_{R}(x_0)$.  
	
	We fix an arbitrary $\alpha\in (0,1)$.
	For any harmonic function $f$ on $B_{R}(x_0)$ local elliptic estimates yield {(with $[f]_{\beta,G}$ denoting the Hölder seminorm in $C^{0,\beta}(G)$)}
	$$
	[f]_{\alpha/2,B_{R/4}(x_0) }\le C\, R^{-(\alpha+n)/2}\| f\|_{L^2(B_{R/2}(x_0))}
	\quad\mbox{with a constant}\quad C=C(n,\alpha).
	$$
	If we assume further that $f(x_0)=0$ we conclude that $\forall r\in (0,\frac{R}{4})$:
	$$
	\int_{B_r(x_0)} |f(x)|^2\, \mathrm{d}x\le[f]_{\alpha/2,B_{R/4}(x_0) }^2\int_{B_r(x_0)} |x-x_0|^\alpha \, \mathrm{d}x\le C\left(\frac{r}{R} \right)^{n+\alpha}\int_{B_R(x_0)} |f(x)|^2\, \mathrm{d}x.
	$$ %Corrected: x-x_0
	For $r\in (\frac{R}{4},R)$, $\frac{r}{R}$ is bounded from below and we directly 
	see that
	$$
	\int_{B_r(x_0)} |f(x)|^2\, \mathrm{d}x\le\int_{B_R(x_0)} |f(x)|^2\, \mathrm{d}x\le C\left(\frac{r}{R} \right)^{n+\alpha}\int_{B_R(x_0)} |f(x)|^2\, \mathrm{d}x.
	$$ 
	All in all we have 
	$$
	\forall r\in (0,R):\quad 
	\int_{B_r(x_0)} |f(x)|^2\, \mathrm{d}x\le  C\left(\frac{r}{R} \right)^{n+\alpha}\int_{B_R(x_0)} |f(x)|^2\, \mathrm{d}x.
	$$
	with a constant $C=C(n,\alpha)$.
	We apply this to the harmonic function (making use of its mean value property in $B_R(x_0)$)
	$$
	f(x)=H(x)-H(x_0)=H(x)-H_{x_0,r}
	=H(x)-H_{x_0,R}
	$$
	and find with a constant $C=C(n,\alpha)$  that
	\begin{equation}\label{eq:app_2.5}
	\int_{B_r(x_0)}|H - H_{x_0,r} |^2\, \mathrm{d}x\le 
	C\left(\frac{r}{R} \right)^{n+\alpha} \int_{B_R(x_0)}|H - H_{x_0,R} |^2\, \mathrm{d}x.
	\end{equation}
	Putting all the estimates together we find the following with constants $C=C(n,\alpha)$:
	\begin{align*}
	\int_{B_r (x_0)}  & |U -U_{x_0,r}|^2\, \mathrm{d}x
	= \int_{B_r (x_0)}|U -H + H - H_{x_0,r} + H_{x_0,r}- U_{x_0,r}|^2\, \mathrm{d}x\\
	\le& C\left( \int_{B_r (x_0)}|U -H|^2 \, \mathrm{d}x + \int_{B_r (x_0)} |H - H_{x_0,r}|^2 \, \mathrm{d}x + \int_{B_r (x_0)} |H_{x_0,r}- U_{x_0,r}|^2\, \mathrm{d}x \right) \\
	\stackrel{\mbox{\scriptsize \eqref{eq:app_2.3},\eqref{eq:app_2.4},\eqref{eq:app_2.5}}}{\le}&
	C\left( R^n +  \left(\frac{r}{R} \right)^{n+\alpha} \int_{B_R(x_0)}|H - H_{x_0,R} |^2\, \mathrm{d}x  \right)\\
	\le &C\Bigg( R^n + \left(\frac{r}{R} \right)^{n+\alpha} \\
	&\cdot \left(  \int_{B_R (x_0)}|H - U|^2 \, \mathrm{d}x + \int_{B_R (x_0)} | U - U_{x_0,R}|^2 \, \mathrm{d}x
	+ \int_{B_R (x_0)} | U_{x_0,R}- H_{x_0,R}|^2\, \mathrm{d}x\right) \Bigg)\\
	\stackrel{\mbox{\scriptsize \eqref{eq:app_2.3},\eqref{eq:app_2.4}}}{\le}
	&C\left( R^n + \left(\frac{r}{R} \right)^{n+\alpha} 
	\cdot \left(  R^n + \int_{B_R (x_0)} | U - U_{x_0,R}|^2 \, \mathrm{d}x \right) \right)\\
	\le& C\left( R^n + \left(\frac{r}{R} \right)^{n+\alpha} 
	\cdot  \int_{B_R (x_0)} | U - U_{x_0,R}|^2 \, \mathrm{d}x  \right).
	\end{align*}
	Introducing the notation
	$$
	\Phi(r):= \int_{B_r(x_0)}| U - U_{x_0,r} |^2\, \mathrm{d}x
	$$
	this estimate rewrites as
	\begin{equation*}
	\forall 0 <r\le R\le R_0:\quad \Phi(r)\le C \left(  R^n + \left(\frac{r}{R} \right)^{n+\alpha}\Phi(R)\right) .
	\end{equation*}
	In order to proceed we need to introduce the following increasing variant of $\Phi$:
	$$
	\widetilde{\Phi} (r) := \sup_{\rho\in (0,r]}\Phi(\rho),
	$$
	which obeys the same relation as $\Phi$:
	\begin{equation}\label{eq:app_2.6}
	\forall 0 <r\le R\le R_0:\quad \widetilde{\Phi}(r)\le C \left(  R^n + \left(\frac{r}{R} \right)^{n+\alpha}\widetilde{\Phi}(R)\right) .
	\end{equation}
	Lemma 2.1 from \cite[Chapter III]{Giaquinta} yields that then with $C=C(n,\alpha)$
	$$
	\forall 0 <r\le R\le R_0:\quad \widetilde{\Phi}(r)\le C \left(  r^n + \left(\frac{r}{R} \right)^{n}\widetilde{\Phi}(R)\right) 
	$$
	and in particular that
	\begin{align*}
	\forall 0 <r\le R_0:\quad \Phi(r)\le &\widetilde{\Phi} (r)
	\le  C r^n \left(  1 + \frac{1}{R_0^n} \widetilde{\Phi}(R_0)\right) 
	\le C r^n \left(  1 + \frac{1}{R_0^n} F(u)\right)\\
	\le& C(n,\alpha, F(u), R_0)r^n.
	\end{align*}
	(Recall again the local boundedness of the extension operator in \eqref{eq:odd_extension}.)
	Since we may e.g. fix $\alpha=1/2$ we end up with
	$$
	\forall 0 <r\le R_0:\quad \int_{B_r(x_0)}|U - U_{x_0,r} |^2\, \mathrm{d}x\le C r^n
	$$
	with a constant $C=C(n,R_0,F(u))$.
	This proves \eqref{eq:app_2.1}. %The last claim is then immediate by applying the Cauchy-Schwarz inequality.
\end{proof}

%Das Folgende ist neu, entspricht aber meiner E-Mail von vor Weihnachten.
\begin{corollary}\label{cor:global_Sobolev_regularity}
	Let $\Omega=B_1(0)\subset\mathbb{R}^n$ be 
	the (unit) ball
	and  $\varphi=u_0\equiv \operatorname{const}>0$. We consider any minimiser $u$ of $F$ on  ${\mathscr N}$. 	Then we have that $u \in W^{2,q} (B_1(0))$ for all $q \in [1,\infty)$ and that  $u \in C^{1,\alpha}(\overline{B_1(0)})$ for each $\alpha \in (0,1)$.
\end{corollary}

\begin{proof}
	According to \cite[Chapter IV, Remark 1.1.1]{Stein} the definition of $BMO$ does not depend on whether one is working with balls or with cubes. Hence the previous Theorem~\ref{thm:app_Delta_BMO} shows that $\Delta u \in BMO(B_1(0))$ in the sense of Jones \cite[p. 41]{Jones}. According to the remarks between 
	Theorem 2 and Theorem 3 in Jones' work, his Theorem 1 (where for the sufficiency part he gives strong credits to Reimann~\cite{Reimann}) applies in particular to the ball $B_1(0)$. This yields that $\Delta u $ has an extension $\tilde U\in BMO (\mathbb{R}^n)$. Applying  \cite[Corollary in Chapter IV.1.3]{Stein} shows then that $\tilde U\in L^q_{\operatorname{loc}} (\mathbb{R}^n)$ for any $q\in (1,\infty)$. Hence $\Delta u \in L^q(B_1(0))$
	and elliptic regularity yields that $u \in W^{2,q} (B_1(0))$.
	By Sobolev embedding the H\"older regularity follows.
\end{proof}

\begin{remark}\label{app_rem:2}
 	We know from Remark~\ref{rem:1.2} that in dimensions $n\in\{1,2,3\}$ the previous boundary regularity result can be extended to general sufficiently smooth strictly positive %Flache Menge und Rand trennen!
 	boundary data, to general sufficiently smooth domains, and to the case of Dirichlet boundary conditions.
	
	We have to leave the %obvious
 question open whether such generalisations are available in dimensions $n\ge 4$.
\end{remark}
 
%===================================================================

\section{{Radial minimisers}} \label{sec:radial}

In Remark \ref{rem:radial} we have seen that minimisers in $\mathscr{D}_{\operatorname{rad}}$ and $\mathscr{N}_{\operatorname{rad}}$  can be found.  In the sequel we want to compute these minimisers explicitly and study their properties on balls $\Omega=B_R(0)$ with $\varphi\equiv \operatorname{const} =: u_0\in (0,\infty)$. 

Recall that each radial biharmonic function is given by 
\begin{equation*}
f(r) = C_1 r^{4-n} + C_2 r^{2-n} + C_3 r^2 + C_4
\end{equation*}
if $n \neq 2,n\neq 4$,
\begin{equation*}
f(r) = C_1 \log r + C_2 r^{-2}  + C_3 r^2 + C_4
\end{equation*}
if $n = 4$ and 
\begin{equation*}
f(r) = C_1 r^2 \log r + C_2 \log r + C_3 r^2 + C_4
\end{equation*}
if $n = 2$. Hence each radial minimiser must be either of the form $u \equiv u_0$ or 
\begin{equation}\label{eq:sonne}
u(x) = u(\rho,C_1,C_2,C_3,C_4)(x) = \begin{cases} 0 & |x| \leq \rho, \\ f(|x|) & |x| \geq \rho.  \end{cases}
\end{equation}
 We remark that $C_1,C_2,C_3,C_4$ can be uniquely determined by $\rho$ via the boundary conditions. 

\begin{example}\label{eq:Naviern=2}
 Here we look at Navier boundary values in dimension $n = 2$ on $\Omega =B_1(0)$ with $\varphi(x)\equiv u_0\in (0,\infty)$.
 We also make a slight modification of the functional, namely we seek to minimise 
\begin{equation}
F_\lambda(u) = \int_\Omega (\Delta u)^2 \; \mathrm{d}x + \lambda |\{ u \neq 0 \}|,
\end{equation}
for some parameter $\lambda > 0$.  One either has $u \equiv u_0$, i.e. $F(u) = \lambda \pi$ or $u = u(\rho,C_1,C_2,C_3,C_4)$ for some $\rho \in (0,1)$ and $C_1,C_2,C_3,C_4\in \mathbb{R}$, cf. \eqref{eq:sonne}. 
If $u = u(\rho, C_1,C_2,C_3,C_4)$ is as above then 
\begin{equation}\label{eq:19}
\Delta u(x) = C_1 ( 4 \log r + 4 ) + 4C_3. 
\end{equation}
The Navier boundary conditions imply then $4C_1 + 4C_3 = 0$ and $C_3 + C_4 = u_0$. In particular $C_1= -C_3 = C_4-u_0$. The conditions $u = \partial_r u = 0$ on $\partial B_\rho(0)$ yield 
\begin{equation}\label{eq:21}
 0  = C_1 \rho^2 \log \rho  + C_2 \log \rho + C_3 \rho^2 + C_4
\end{equation}
and 
\begin{equation*}
0 =  C_1 \left( \rho + 2\rho \log \rho \right) + C_2 \frac{1}{\rho} + 2 C_3 \rho. 
\end{equation*}
Using $C_3 = - C_1$ the last equation simplifies to 
\begin{equation*}
0  = C_1 (- \rho + 2 \rho \log \rho ) + C_2 \frac{1}{\rho},
\end{equation*}
i.e. 
\begin{equation*}
C_2 = (\rho^2 - 2 \rho^2 \log \rho) C_1. 
\end{equation*}
Now \eqref{eq:21} yields 
\begin{equation*}
0 = C_1 \rho^2 \log \rho + \log \rho (\rho^2 - 2\rho^2 \log \rho) C_1 - C_1 \rho^2 + (C_1+ u_0). 
\end{equation*}
Therefore 
\begin{equation*}
C_1 = - \frac{u_0}{2\rho^2\log \rho - 2\rho^2 \log^2 \rho - \rho^2 + 1 }.
\end{equation*}
Going back to \eqref{eq:19} we find $\Delta u(x) =  4C_1 \log r$ and hence 
\begin{align*}
F_\lambda(u) =& \lambda \pi ( 1- \rho^2 ) 
+  32\pi C_1^2\int_{\rho}^1 r \log^2r {\; \mathrm{d}r} = \lambda\pi ( 1- \rho^2 ) + 8\pi  C_1^2 ( 1- \rho^2 -2\rho^2 \log^2 \rho + 2 \rho^2 \log \rho)\\
=&\lambda \pi ( 1- \rho^2 ) + \frac{8\pi  u_0^2 ( 1- \rho^2 -2\rho^2 \log^2 \rho + 2 \rho^2 \log \rho)}{(2\rho^2\log \rho - 2\rho^2 \log^2 \rho - \rho^2 + 1)^2},  
\end{align*}
i.e. 
\begin{equation*}
F_\lambda(u) =  f_\lambda(u_0,\rho),
\end{equation*}
where
$$
f_\lambda(u_0,\rho):=  \lambda \pi ( 1- \rho^2 ) + \frac{8\pi  u_0^2 }{2\rho^2\log \rho - 2\rho^2 \log^2 \rho - \rho^2 + 1}.
$$
In particular we find 
\begin{equation}\label{eq:infvalueNrad}
\inf_{v \in \mathscr{N}_{\operatorname{rad}}} F_\lambda(v) = \min \left\lbrace
 \lambda \pi , \inf_{\rho \in (0,1)} f_\lambda(u_0,\rho)\right\rbrace.
\end{equation}

One observes that $f_\lambda(u_0,0)=\lambda\pi+8\pi\, u_0^2$ and $\lim_{\rho\uparrow 1}f_\lambda(u_0,\rho)=\infty$.
Moreover, $f_\lambda (u_0,\rho)=\lambda f_1(u_0/\sqrt{\lambda},\rho)$, i.e., enlarging $\lambda$ has the same effect as making $u_0$ smaller.
The infimum of $f_\lambda(u_0,\,.\,)$ can be studied  computer-assistedly. For some plots in the case of $\lambda = 1$ one may see Figure~\ref{fig:radial_minima}.
\begin{figure}[h] 
	\centering 
	\includegraphics[width=.3\textwidth]{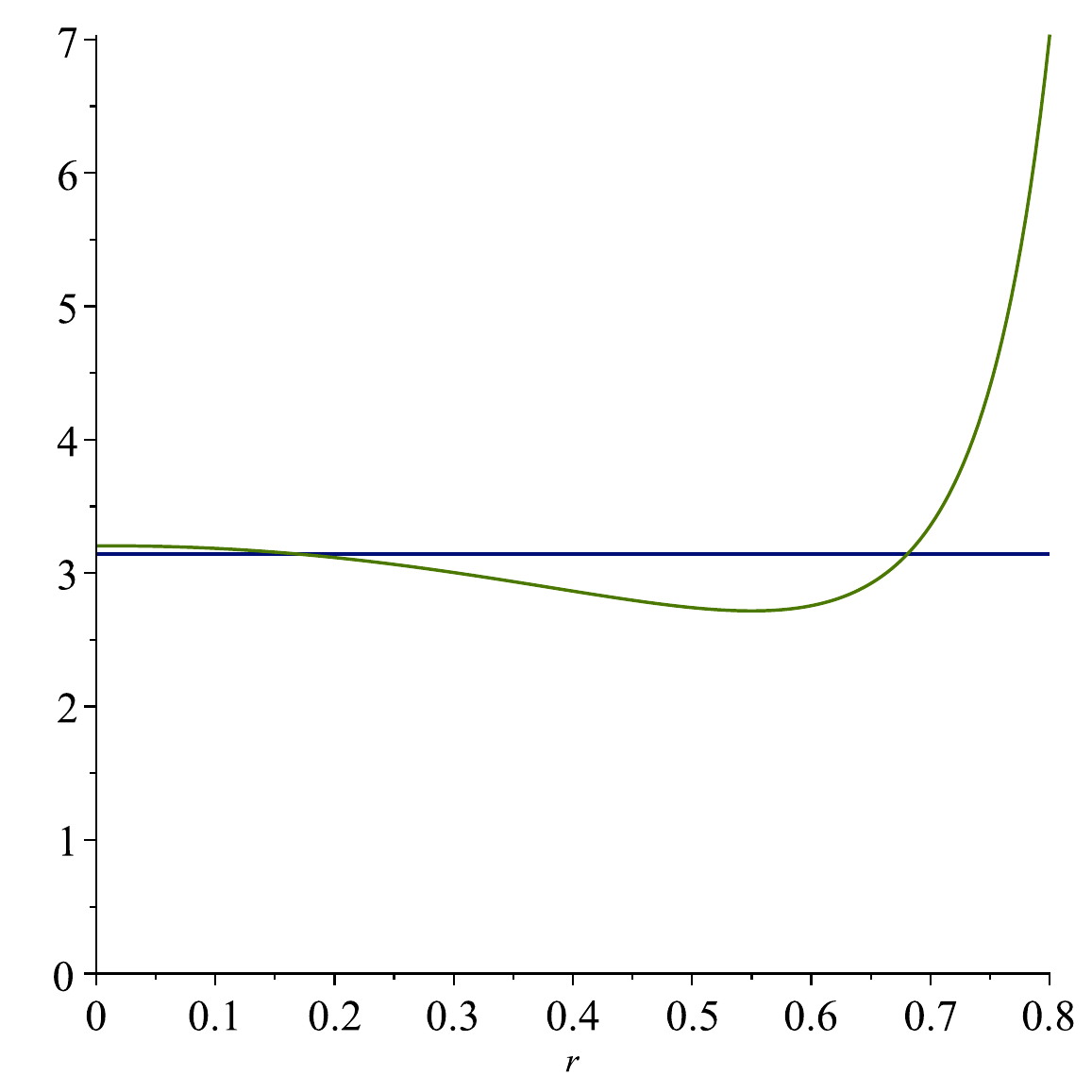}\hspace{0.3cm}
	\includegraphics[width=.3\textwidth]{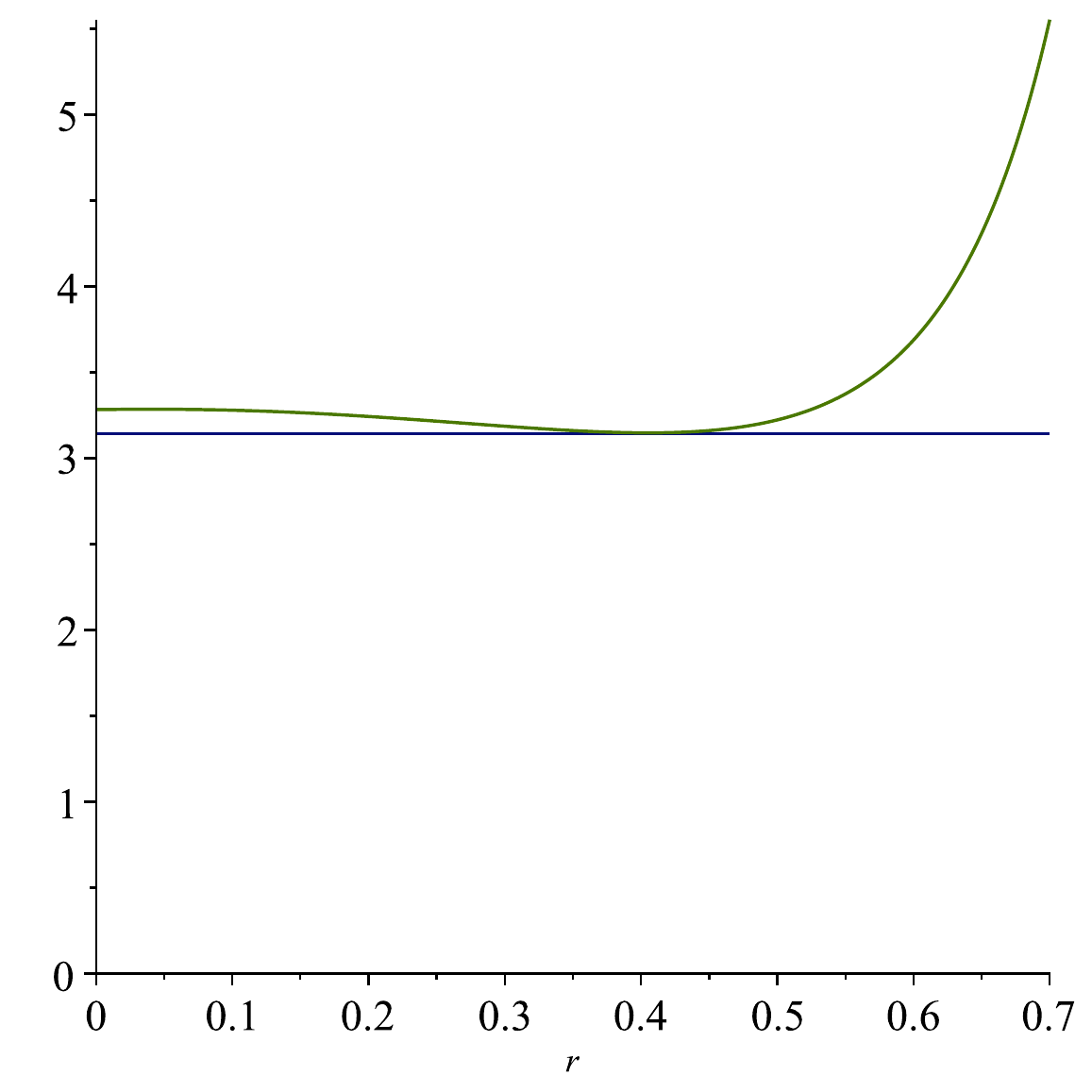}\hspace{0.3cm}
	\includegraphics[width=.3\textwidth]{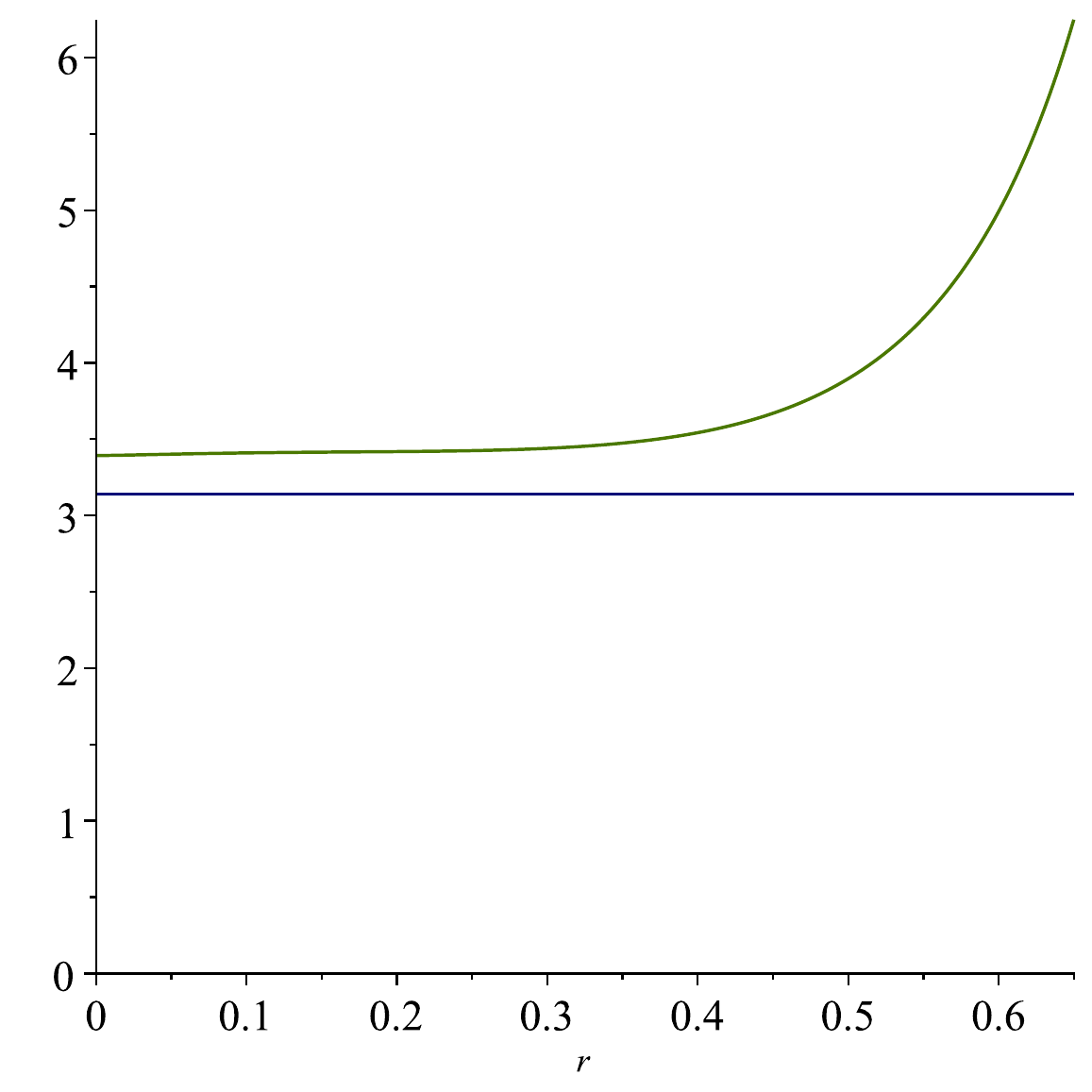} 
	\caption[] 
	{Plots of $f_1(0.05,\,.\,)$, $f_1(0.075,\,.\,)$, and $f_1(0.1,\,.\,)$
		(left to right). The straight line marks the threshold energy level $\pi$.}
	\label{fig:radial_minima} 
\end{figure}

This shows that for small enough $u_0$ there is a ``flat'' set, while for large $u_0$ there is none.

\end{example}

\begin{remark}\label{rem:4}
We have seen that for radial minimisers in $\mathscr{N}_{\operatorname{rad}}, n=2$ one has $\Delta u \not \in C^0(B_1(0))$ unless $u \equiv \operatorname{const}$. Indeed, one infers from the previous example that if the infiumum in
\eqref{eq:infvalueNrad} is smaller than $\lambda \pi$ one has a minimiser $u \in \mathscr{N}_{\operatorname{rad}}$ with 
\begin{equation}
    \Delta u (x) = \begin{cases}
         0, & |x| \leq \rho_{\min}, \\ - \frac{4u_0}{2\rho_{\min}^2\log(\rho_{\min})- 2 \rho_{\min}^2 \log^2(\rho_{\min}) - \rho_{\min}^2 + 1} \log|x|, & |x| > \rho_{\min},
    \end{cases}
\end{equation}
where $\rho_{\min} \in (0,1)$ is some value where the infimum in \eqref{eq:infvalueNrad} is attained.  
This discontinuity phenomenon also occurs in arbitrary dimension. Indeed, here we show the following 
\\
\textbf{Claim.} Suppose that $u \in \mathscr{N}_{\operatorname{rad}}$ is a minimiser with $|\{u= 0 \}| >0$. Then $\Delta u \not \in C^0(B_1(0))$. In particular $u \not \in C^2(B_1(0))$. 
\\
\textit{Proof of the claim.} Assume that $u \in \mathscr{N}_{\operatorname{rad}}$ is as in the claim and satisfies $\Delta u \in C^0(B_1(0))$. By Remark \ref{rem:1.2} we also have that $\Delta u \in C^0(\overline{B_1(0)})$ and $\Delta u \vert_{\partial B_1(0) } = 0$.
Then $\{ u = 0 \} = \overline{B_\rho(0)}$ for some $\rho \in (0,1)$. Let $A_{1,\rho} := B_1(0) \setminus \overline{B_\rho(0)}$. 
Now $\Delta u$ lies in $C^2(A_{1,\rho}) \cap C^0( \overline{A_{1,\rho}}) $ and is harmonic in $A_{1,\rho}$. The classical maximum principle now yields 
\begin{equation}\label{eq:maxpr}
\|\Delta u \|_{L^\infty(A_{1,\rho})} \leq \|\Delta u \|_{L^\infty( \partial A_{1,\rho} ) }. 
\end{equation}
However, note that $\Delta u= 0 $ on $\partial B_1(0)$ and $\Delta u = 0 $ on $\partial B_\rho(0)$ since $u = 0$ on $B_\rho(0)$ and $\Delta u \in C^0$. As a consequence $\Delta u \equiv 0$ on $\partial A_{1,\rho}$ and \eqref{eq:maxpr} yields $\Delta u = 0$ on $A_{1,\rho}$.  Since $\Delta u = 0$ also on $B_\rho(0)$ we conclude that $\Delta u = 0$ on the whole of $B_1(0)$. This  {yields} that $u$ is harmonic on $B_1(0)$ and thereupon the maximum principle implies that $u(x)\equiv u_0 > 0$ (as $u\vert_{\partial \Omega} = u_0 > 0)$. This contradicts $|\{ u = 0 \}| > 0$ and the claim follows. 
\end{remark}

\begin{remark}\label{rem:strictlyincr}
The expression in \eqref{eq:infvalueNrad} is strictly increasing in $u_0$ until it reaches the value $\lambda \pi$. {From} there on it is constant. Also this is true in any arbitrary dimension, as we shall prove here in the case of $\lambda =1$.

\medskip\noindent
\textbf{Claim.}  
If (for $B_1(0) \subset \mathbb{R}^n$) we set
\begin{equation*}
    \mathscr{N}_{\operatorname{rad}}(u_0) := \{ u \in H^2(B_1(0)) : u -u_0 \in H_0^1(B_1(0)) \},  
\end{equation*}
then the function $f: (0, \infty) \rightarrow (0, \infty)$
\begin{equation*}
    f(u_0) := \inf_{\psi \in \mathscr{N}_{\operatorname{rad}}(u_0)} F(\psi) 
\end{equation*}
is of the form $f(z) = \min\{ \eta(z) , |B_1(0)| \}$, where $\eta$ is a strictly increasing function. More precisely, if $\hat{u}_0$ permits a nonconstant radial minimiser then $f$ is strictly increasing on $[0,\hat{u}_0]$.

\medskip\noindent
{\emph{Proof of the claim.}}  {Let} $0 <u_1 < u_0$ be arbitrary constant boundary data. Then there exists $\alpha \in (0,1)$ such that $u_1 = \alpha u_0$. Let $u \in \mathscr{N}_{\operatorname{rad}}(u_0)$ be a minimiser. The rescaled function $u_\alpha := \alpha u$ lies in $\mathscr{N}_{\operatorname{rad}}(u_1)$ and satisfies 
\begin{align}
    \inf_{\psi \in \mathscr{N}_{\operatorname{rad}}(u_1)} F(\psi) & \leq F(\alpha u)  = \alpha^2 \int_\Omega ( \Delta u ) ^2 \; \mathrm{d}x + |\{ \alpha u \neq 0 \} | = \alpha^2 \int_\Omega ( \Delta u ) ^2 \; \mathrm{d}x + |\{  u \neq 0 \} |\nonumber
    \\ 
    & \leq \int_\Omega ( \Delta u ) ^2 \; \mathrm{d}x + |\{  u \neq 0 \} | = F(u) = \inf_{\psi \in \mathscr{N}_{\operatorname{rad}}(u_0)} F(\psi).\label{eq:29}
\end{align} 
The inequality in the first estimate \eqref{eq:29} is strict iff $\int_\Omega (\Delta u)^2 \; \mathrm{d}x \neq 0$. Notice that this integal vanishes iff $u$ is harmonic in $\Omega$. Uniqueness of solutions for the harmonic Dirichlet problem yields that in this case $u \equiv u_0 \equiv \mathrm{const}$. Hence
\begin{equation*}
    \inf_{\psi \in \mathscr{N}_{\operatorname{rad}}(u_0)} F(\psi) = F(u) = F(u_0) = |B_1(0)|.
\end{equation*}
This and \eqref{eq:29} imply that 
\begin{equation*}
   f(u_1) \leq f(u_0) \quad \textrm{where the inequality is strict, when $u_0$ admits a nonconstant minimiser.} 
\end{equation*}
The assertion follows.
\end{remark}

\begin{example}\label{ex:ridofR} 
 We  minimise $F$ in $\mathscr{N}_{\operatorname{rad}}$ again in dimension $n = 2$, but this time with $\Omega = B_R(0)$ for any arbitrary $R \in (0,\infty)$ and constant boundary value $u_0$. We write $F(\cdot \; |  \; B_R(0))$ to distinguish between the minimisation problems for different values of $R$. One readily checks that for each $u \in \mathscr{N}_{\operatorname{rad}}$ one can define $u_R :B_1(0) \rightarrow \mathbb{R}$ via $u_R(x) := u(Rx)$ and obtains
 \begin{equation*}
 F(u \; | \; B_R(0)) = R^{n-4} \left( \int_{B_1(0)} ( \Delta u_R)^2 \; \mathrm{d}x + R^4 |\{ x \in B_1(0) : u_R \neq 0\}| \right) 
 \end{equation*}
 This yields that for $\lambda =R^4$ one has 
 \begin{equation*}
 F(u \; | \; B_R(0)) = R^{n-4} F_\lambda (u_R \; | \; B_1(0)), 
 \end{equation*}
 where $F_\lambda$ is as in Example \ref{eq:Naviern=2}.
 Using this and the fact that $u\vert_{{\partial B_R(0)}} = (u_R)\vert_{{\partial B_1(0)}}  = u_0$  we obtain from \eqref{eq:infvalueNrad}
 \begin{align*}
 \inf_{u \in \mathscr{N}_{\operatorname{rad}}} F(u \; | \; B_R(0) ) & = R^{n-4}  \inf_{u \in \mathscr{N}_{\operatorname{rad}}} F_{R^4}(u \; | \; B_1(0) )
 \\ & = R^{n-4}   \min \left\lbrace
 R^4 \pi , \inf_{\rho \in (0,1)} f_{R^4}(u_0,\rho) \right\rbrace.
 \end{align*}
 This can again be studied computer assistedly.
 \end{example}

\begin{example}
We now minimise $F$ defined in $\mathscr{D}_{\operatorname{rad}}$ for $n = 2$ on $\Omega = B_1(0)$. In particular we prescribe  $u - u_0 \in H^2_0(\Omega)$ for some constant function $u_0>0$. Again either $u \equiv u_0$ or $u = u(C_1,C_2,C_3,C_4,\rho)$ for suitable values of $C_1,C_2,C_3,C_4,\rho$, cf. \eqref{eq:sonne}. 
 The conditions we have to ensure are $u = u_0, \partial_r u = 0$ on $\partial B_1(0)$ and $u= \partial_r u = 0$ on $\partial B_\rho(0)$. We compute on $[\rho,1]$:
\begin{equation*}
\partial_r u(C_1,C_2,C_3,C_4, \rho)(r)= C_1(2r\log r + r) + C_2 \frac{1}{r} + 2C_3 r. 
\end{equation*}
Evaluated at $ r = 1$  and $r  =\rho$ this amounts to 
\begin{align}
0 & = C_1 + C_2+ 2C_3 \label{eq:firstr=1},\\
 0 &  = C_1(2\rho \log \rho + \rho) + C_2 \frac{1}{\rho} + 2C_3 \rho.  \nonumber
 \end{align}
 Multiplying the first equation with $\rho$ and subtracting the second one we find 
 \begin{equation*}
 0 = - C_1(2 \rho \log \rho) + C_2 \left( \rho - \frac{1}{\rho} \right), 
 \end{equation*}
 which yields 
 \begin{equation*}
 C_2 = C_1 \frac{2\rho \log \rho}{\rho- \frac{1}{\rho}} = C_1 \frac{2\rho^2 \log \rho}{\rho^2- 1}.
 \end{equation*}
 Reinserting this into \eqref{eq:firstr=1} yields
 \begin{equation*}
 0 = C_1 \left( 1 + \frac{2\rho^2 \log \rho}{\rho^2 - 1} \right) + 2C_3 \qquad \Rightarrow \; \; C_3 = -C_1 \left( \frac{1}{2}+ \frac{\rho^2 \log \rho}{\rho^2 - 1} \right). 
 \end{equation*}
 Next we use that $u \equiv u_0$ on $\partial B_1(0)$ yields $C_3 + C_4 = u_0$. Therefore 
 \begin{align*}
 0 & = u |_{{\partial B_\rho (0) }} = C_1 \rho^2 \log \rho + C_2 \log \rho + C_3 \rho^2 + C_4 
 \\ & = u_0 + C_1 \rho^2 \log \rho + C_2 \log \rho + C_3 (\rho^2 - 1) \\ &  = u_0 + C_1 \left( \rho^2 \log \rho + \log \rho \left( \frac{2\rho^2\log \rho}{\rho^2 - 1} \right) - (\rho^2 - 1)  \left(  \frac{1}{2} + \frac{\rho^2 \log \rho}{\rho^2 - 1} \right) \right).
 \end{align*}
 This yields an explicit fomula for $C_1$, namely 
 \begin{equation*}
 C_1 = C_1(u_0 ,\rho) :=  \frac{-u_0}{\frac{2\rho^2 \log^2 \rho}{\rho^2-1} - \frac{1}{2}(\rho^2-1) } = \frac{u_0}{\frac{1}{2} ( \rho^2 - 1) - \frac{2\rho^2 \log^2\rho}{\rho^2-1}}
 =u_0\frac{2(1-\rho^2)}{4\rho^2\log^2\rho -(1-\rho^2)^2}.
 \end{equation*}
 Therefore one has also 
 \begin{align}
 C_3 = C_3(u_0, \rho) =& -\left( \frac{1}{2} + \frac{\rho^2 \log \rho}{\rho^2-1}\right) C_1(u_0, \rho) =- \left( \frac{1}{2} + \frac{\rho^2 \log \rho}{\rho^2-1}\right) \frac{u_0}{\frac{1}{2}(\rho^2-1) - \frac{2\rho^2 \log^2\rho}{\rho^2-1} } \nonumber \\
 =&u_0\frac{2\rho^2\log\rho -(1-\rho^2)}{4\rho^2\log^2\rho -(1-\rho^2)^2}. \label{eq:C_drei}
 \end{align}
 Next we use that for $u = u(C_1,C_2,C_3,C_4, \rho)$ we have 
 \begin{equation*}
 \Delta u (x) = C_1 ( 4 \log r + 4) + 4C_3. 
 \end{equation*}
Using that $C_1= C_1(u_0,\rho)$ and $C_3 = C_3(u_0, \rho)$ one can therefore determine the $F(u)$ only in terms of $u_0,\rho$. 
An explicit formula for the infimum can therefore be found. 
Finally, we can compute the energy of each admissible $u = u(C_1,C_2,C_3,C_4,\rho)$ in terms of $u_0, \rho$ and may minimise  computer-assistedly. 
\begin{align*}
F(u) & = \int_{B_1(0) \setminus B_\rho(0)} [ C_1 (4 \log|x| + 4) + 4 C_3]^2 \mathrm{d}x + \pi (1- \rho^2)
\\ & = 2\pi C_1(u_0,\rho)^2 \int_\rho^1 \left( 4 \log r + 4 - 4 \left( \frac{1}{2} + \frac{\rho^2 \log \rho}{\rho^2-1} \right) \right)^2 r \; \mathrm{d}r + \pi (1- \rho^2) 
\\ & =  2 \pi C_1(u_0,\rho)^2 \int_\rho^1 \left( 4 \log r + D(\rho) \right)^2 r \; \mathrm{d}r + \pi ( 1- \rho^2) 
\\ & = 2\pi C_1(u_0,\rho)^2 \int_\rho^1 (16 r \log^2 r + 8 D(\rho) r \log r + D(\rho)^2 r)  \; \mathrm{d}r + \pi (1-\rho^2) 
 \end{align*}
 where $D( \rho) = 2 -  4 \frac{\rho^2 \log \rho}{\rho^2 -1 }$. Using 
 \begin{equation*}
 \int_{\rho}^1 r \log ^2 r \; \mathrm{d}r  = - \frac{\rho^2}{2} \log^2 \rho + \frac{\rho^2}{2}\log \rho + \frac{1}{4}(1- \rho^2) , \quad 
 \int_{\rho}^1 r \log r \; \mathrm{d}r = - \frac{\rho^2}{2} \log \rho - \frac{1-\rho^2}{4},
 \end{equation*}
 we infer 
 \begin{align*}
 F(u) & =g(u_0, \rho), 
 \end{align*}
 where  
 \begin{align*}
 g(u_0, \rho) & := 2 \pi C_1(u_0,\rho)^2 \left( -8 \rho^2 \log^2 \rho + 8\rho^2 \log \rho + 4(1-\rho^2) \right) \\   & \quad  + 2 \pi C_1(u_0, \rho)^2 \left( - 4 \rho^2 D(\rho) \log \rho - 2 D(\rho) ( 1- \rho^2)  + \frac{1}{2}D(\rho)^2 (1- \rho^2) \right) + \pi (1- \rho^2), 
 \end{align*} 
 with 
 \begin{equation*}
 C_1(u_0, \rho) =u_0\frac{2(1-\rho^2)}{4\rho^2\log^2\rho -(1-\rho^2)^2}, \quad  D(\rho) = 2 - 4 \frac{\rho^2 \log\rho}{\rho^2-1}.
 \end{equation*}
 In particular 
 \begin{equation}\label{eq:inifiDrad}
 \inf_{ u \in \mathscr{D}_{\operatorname{rad}} } F(u)  = \min \left\lbrace \pi , \inf_{\rho \in (0, 1)} g(u_0,\rho) \right\rbrace.
\end{equation}
\end{example}

\begin{figure}[h] 
	\centering 
	\includegraphics[width=.3\textwidth]{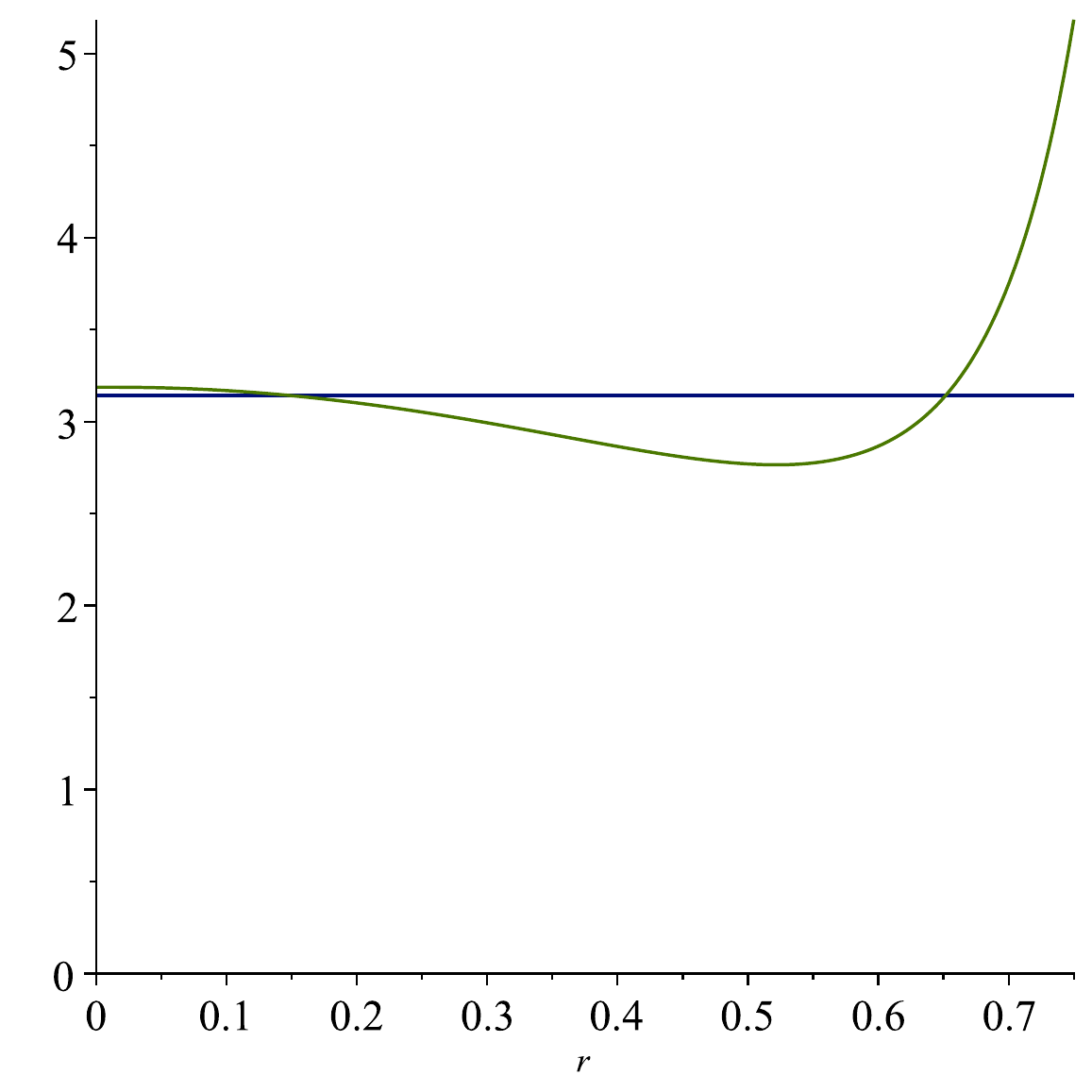}\hspace{0.3cm}
	\includegraphics[width=.3\textwidth]{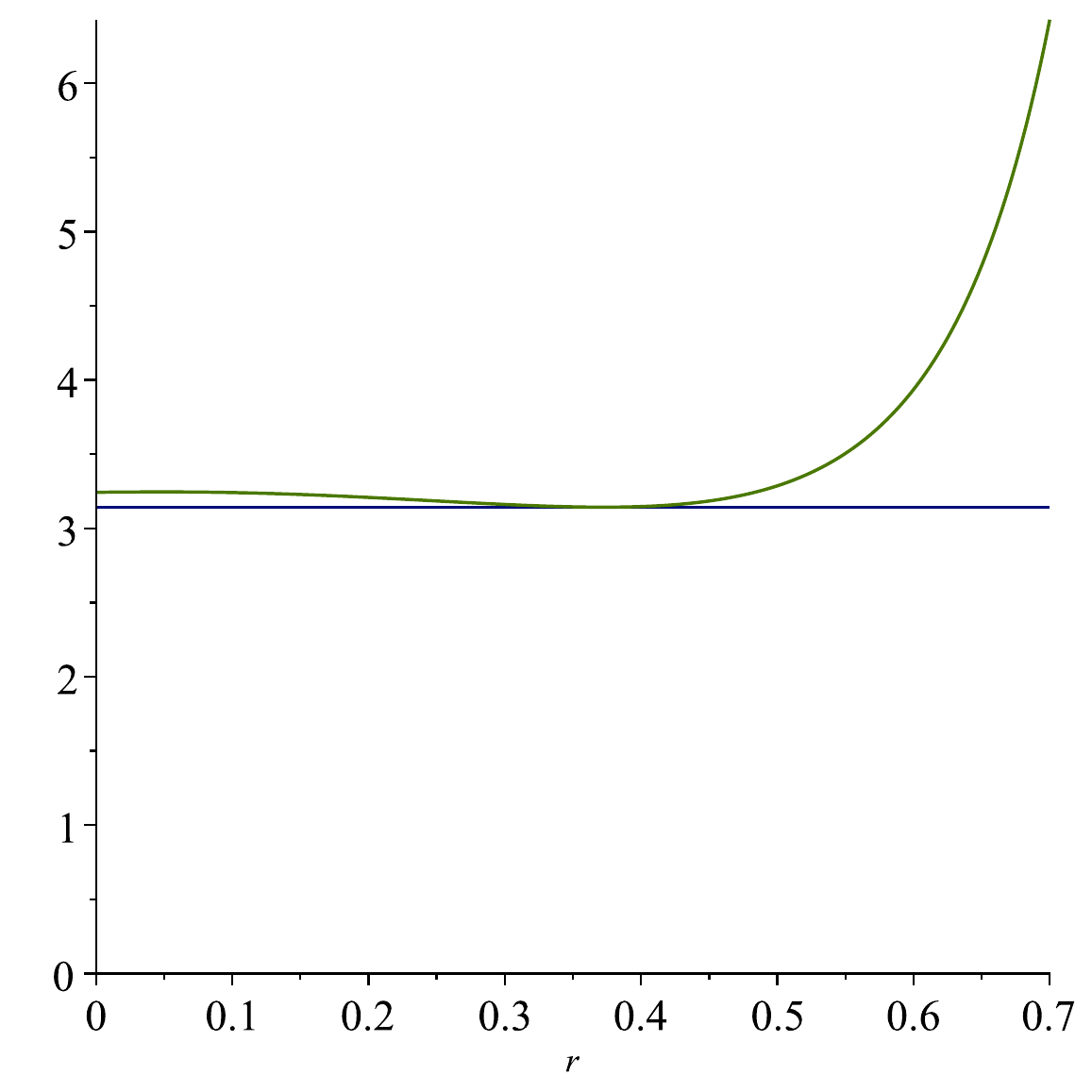}\hspace{0.3cm}
	\includegraphics[width=.3\textwidth]{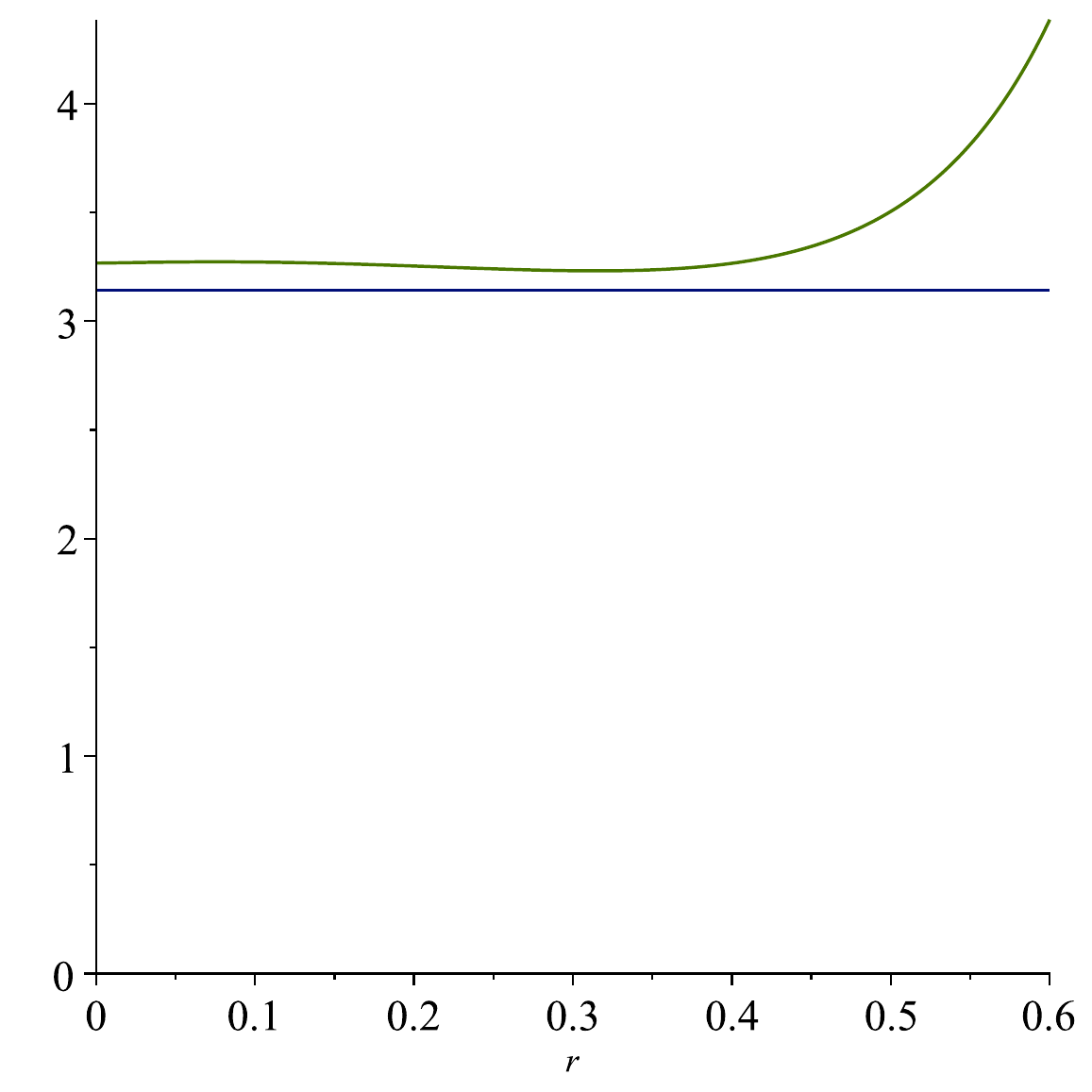} 
	\caption[] 
	{Plots of $g(0.03,\,.\,)$, $g(0.045,\,.\,)$, and $g(0.05,\,.\,)$
		(left to right). The straight line marks the threshold energy level $\pi$.}
	\label{fig:radial_minima_Dirichlet} 
\end{figure}

\begin{remark} 
	The nonconstant minimisers in the previous example display some qualitative properties, which we can show in any dimension.
Let  $u\in \mathscr{D}_{\operatorname{rad}}$ be a nonconstant radial minimiser on $B_1(0)\subset \mathbb{R}^n$, which exists for sufficiently small boundary data $u_0\equiv \textrm{const}>0$. %Ergaenzt:
[The existence can be seen as follows: Take a function $v\in C^\infty (B_1(0))$ with $v(x)\equiv 1$ close to $\partial B_1(0)$ and $|\operatorname{supp} (v)|< \frac{e_n}{2}$. Then $\lim_{u_0 \downarrow 0}F(u_0\cdot v)\le\frac{e_n}{2} $, while $F(x\mapsto u_0)\equiv e_n$].

We introduce now $\overline{B_\rho(0)}:= \{u=|\nabla u|=0\}$, $\Delta_\rho:=\lim_{|x|\downarrow \rho}\Delta u(x)$,  $\Delta_1:=\lim_{|x|\uparrow 1}\Delta u(x)$. Notice that these limits are well defined as $\Delta u \vert_{B_1(0)\setminus \overline{B_\rho(0)}}$ is a radial harmonic function.
We prove the following \\
\textbf{Claim.}
\begin{equation}\label{eq:rem10.1}
\Delta_\rho \cdot \Delta_1 <0.
\end{equation}
\textit{Proof of the claim.} Assume by contradiction that $\Delta_\rho \cdot \Delta_1\ge 0$. 
\\
We consider first the case, where $\Delta_\rho\ge 0$ and $ \Delta_1\ge 0$. 
Then $\Delta u\ge 0$ (i.e., $u$ is subharmonic) on $B_1(0)\setminus\overline{B_\rho(0)}$. First, the strong maximum principle shows that $ u < u_0$ on this annulus, and then Hopf's boundary point lemma (cf. \cite[Lemma 3.4]{GilbargTrudinger}) yields that
$\lim_{|x|\uparrow 1}\frac{\partial u}{\partial r} (x)>0$, a contradiction.\\
We consider next the case, where $\Delta_\rho\le 0$ and $ \Delta_1\le 0$. 
Then $\Delta u\le 0$ (i.e., $u$ is superharmonic) on $B_1(0)\setminus\overline{B_\rho(0)}$. First, the strong maximum principle shows that $ u > 0$ on this annulus, and then Hopf's boundary point lemma yields that
$\lim_{|x|\downarrow \rho}\frac{\partial u}{\partial r} (x)>0$, again a contradiction.
The claim is proved.

\medskip
\noindent
As a consequence we have in particular for nonconstant minimisers in $ \mathscr{D}_{\operatorname{rad}}$:
\begin{enumerate}
    \item \emph{{Discontinuity of the Laplacian.}} 
    $\lim_{|x|\uparrow \rho}\Delta u(x)=0 \not=\Delta_\rho=\lim_{|x|\downarrow \rho}\Delta u(x)$.
\item \emph{{Sign-change of the Laplacian}.} 
It is immediate from \eqref{eq:rem10.1} that $\Delta u$ changes sign on $B_1(0) \setminus \overline{B_\rho(0)}$.
This behaviour is in contrast 
to the situation of Navier boundary conditions, cf. Example \ref{eq:Naviern=2}.
\end{enumerate}
Harmonicity of $\Delta u$ shows further that it has in the annulus $B_1(0) \setminus \overline{B_\rho(0)}$ exactly one nodal hypersurface 
$\{|x|=\rho_0\}=\{x\in B_1(0) \setminus \overline{B_\rho(0)}: \Delta u(x) =0\}$
 for some suitable $\rho_0\in (\rho,1)$. In view of the 
 of the polar form of $\Delta$ for radial functions we find %Geaendert entsprechend dem Gutachten
 $$
 \forall r\in (\rho,1):\quad u(x)|_{|x|=r} =\int^r_\rho s^{1-n}\int_\rho^s\sigma^{n-1} \Delta u(\sigma) \, \operatorname{d}\sigma\, \operatorname{d}s.
 $$
 Hopf's lemma yields then  as above  that $\Delta u >0$ on $B_{\rho_0}(0) \setminus \overline{B_\rho(0)}$ and $\Delta u <0 $ on $B_1(0) \setminus \overline{B_{\rho_0}(0)}$. Making again use of the formula above 
 we conclude that $u>0$ on $B_1(0) \setminus \overline{B_\rho(0)}$.
\end{remark}

%========================================================================

\section{Radiality of Navier minimisers}
\label{sec:naviersym}

In this section we investigate radiality of Navier minimisers in $\mathscr{N}$ in the case that $\Omega = B_1(0)$  and $\varphi = u_0 \equiv \mathrm{const}$. Showing radiality of minimisers means that minimisers in $\mathscr{N}_{\operatorname{rad}}$ coincide with minimisers in $\mathscr{N}.$ This means in particular that the observations we have made for radial Navier minimisers (e.g. that their Laplace is never continuous on $B_1(0)$, cf. Remark \ref{rem:4}) are also observations about minimisers in $\mathscr{N}$.

In this section $\mathscr{N}(u_0)$ denotes the set $\{ u \in H^2(\Omega) : u-u_0 \in H_0^1(\Omega) \}$ for $u_0 > 0$.  We recall the notation $e_n = |B_1(0)|$ for the unit ball in $\mathbb{R}^n$. 

\begin{remark}\label{rem:5.1}
We want to investigate radiality of minimisers in the case of $\Omega = B_1(0)$ and $u_0 \equiv \mathrm{const}> 0$. In Lemma \ref{lem:energinfi} we have already shown that 
\begin{equation*}
    \inf_{\psi \in \mathscr{N}(u_0)} F(\psi) \leq e_n.
\end{equation*}
In the case that equality holds one already obtains trivially a radial minimiser, namely the constant solution $u (x)\equiv u_0$. Indeed, if equality holds then one computes
\begin{equation}
    F(u_0) = \int_{B_1(0)} ( \Delta u_0)^2 \; \mathrm{d}x + |\{ u_0 \neq 0 \}| = e_n = \inf_{\psi \in \mathscr{N}(u_0)} F(\psi).
\end{equation}
In this case it is an obvious guess that any minimiser should be radially symmetric, although according to Example~\ref{ex:1.1} there may also be nonconstant minimisers. However, one may wonder whether also in the generic case, where
$\inf_{\psi \in \mathscr{N}(u_0)} F(\psi) < e_n$, minimisers were always radial.

The following theorem answers this question in the affirmative.
\end{remark}

\begin{theorem}\label{thm:symmetry}
Let $\Omega = B_1(0)$, $u_0 \equiv \mathrm{const}> 0$ be a constant boundary datum 
and $u \in  \mathscr{N}(u_0)$ be any minimiser. 
Then $u$ is radially symmetric.
\end{theorem}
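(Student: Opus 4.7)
The plan is to produce, from any minimizer $u \in \mathscr{N}(u_0)$, a radially symmetric competitor $u^\star \in \mathscr{N}_{\operatorname{rad}}(u_0)$ with $F(u^\star) \leq F(u)$. By minimality of $u$, equality must hold, and the rigidity case of the underlying Talenti-type inequality will identify $u$ with $u^\star$, which is radial.

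By Theorem \ref{thm:app_Delta_BMO} and Remark \ref{app_rem:2}, $u$ is smooth on $\overline{B_1(0)}$ away from $\{u = \nabla u = 0\}$, and the Navier condition $\Delta u = 0$ on $\partial B_1(0)$ is attained classically. Set $A := \{u = 0\}$, $E := B_1(0) \setminus A$, and choose $\rho \in [0,1)$ with $|A| = e_n \rho^n$; set $A^\star := \overline{B_\rho(0)}$ and $E^\star := B_1(0) \setminus A^\star$. The radial competitor $u^\star$ is defined on $E^\star$ as the (unique, by strict convexity) minimizer of $v \mapsto \int_{E^\star}(\Delta v)^2 \, \mathrm{d}x$ provided by Lemma \ref{lem:mixed_Dirichlet:Navier}, among $v \in H^2(E^\star)$ with $v = u_0$ on $\partial B_1(0)$ and $v = |\nabla v| = 0$ on $\partial B_\rho(0)$, the Navier condition $\Delta v = 0$ on $\partial B_1(0)$ arising naturally. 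Rotational invariance combined with uniqueness forces $u^\star$ to be radial, and its extension by zero to $A^\star$ lies in $\mathscr{N}_{\operatorname{rad}}(u_0)$ with $|\{u^\star \neq 0\}| = e_n(1 - \rho^n) = |\{u \neq 0\}|$.

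The heart of the proof is the Talenti-type comparison
\begin{equation}\label{eq:TalentiComp}
    \int_{E^\star}(\Delta u^\star)^2 \, \mathrm{d}x \; \leq \; \int_{B_1(0)}(\Delta u)^2 \, \mathrm{d}x,
\end{equation}
for which we adapt Talenti's classical symmetrization method, applied to $|\Delta u|$, to the annular setting (cf. Appendix \ref{app:rearrangement}). In outline: set $w := -\Delta u \in L^2(B_1(0))$; then $w = 0$ on $\partial B_1(0)$ by the Navier condition and $w$ is harmonic on $E$ (since $\Delta^2 u = 0$ there by Theorem \ref{thm:regularity_1}), while $w = 0$ a.e.\ on the interior of $A$. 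Thus $u - u_0 \in H^1_0(B_1(0))$ solves the Poisson problem $-\Delta(u - u_0) = w$ in $B_1(0)$ with zero Dirichlet data. Rearranging $|w|$ into its radial decreasing rearrangement and combining Talenti's pointwise comparison with the natural identification of the rearranged Dirichlet solution via Lemma \ref{lem:mixed_Dirichlet:Navier}, one sees that $u^\star - u_0$ is precisely the rearranged Dirichlet solution, so that passing to $L^2$-norms (which are preserved by the rearrangement of $|w|$) gives \eqref{eq:TalentiComp}. Combined with $|\{u^\star \neq 0\}| = |\{u \neq 0\}|$ this yields $F(u^\star) \leq F(u)$; minimality of $u$ then forces equality in \eqref{eq:TalentiComp}, and the rigidity statement for Talenti's inequality implies that the level sets of $|\Delta u|$ are concentric balls centered at the origin, so that $u$ itself is radially symmetric.

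The principal obstacle is \eqref{eq:TalentiComp}: classical Talenti works on Dirichlet problems on balls with zero boundary data, whereas here one needs an annular version that simultaneously preserves the Navier condition on $\partial B_1(0)$, rearranges the free set $A$ into a centered ball $A^\star$, and correctly handles the possibly sign-changing Laplacian $\Delta u$. Setting up the appropriate annular rearrangement and extracting a rigidity statement sharp enough to identify $u$ with its radial rearrangement is the delicate technical core of the argument, presumably worked out in detail in Appendix \ref{app:rearrangement}.
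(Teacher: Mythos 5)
Your overall strategy---rearrange $|\Delta u|$ into an annulus so that the flat set is relocated to a centred ball, then run a Talenti-type comparison---is exactly the approach of the paper. However, the two decisive steps are not actually carried out, and the mechanism you propose for the first one is incorrect. You assert that $u^\star - u_0$ ``is precisely the rearranged Dirichlet solution''. It is not: the function $v$ obtained by solving the radial problem $\Delta v = |\Delta u|^*$ on $B_1(0)\setminus \overline{B_\rho(0)}$ with $v=|\nabla v|=0$ on $\partial B_\rho(0)$ has, by equimeasurability of the rearrangement, \emph{exactly the same} Dirichlet energy as $u$, and its boundary value $v|_{\partial B_1(0)}=w(1)$ is in general \emph{not} $u_0$; what the annular Talenti argument actually yields is the pointwise comparison $w(1)\ge u_0$. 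So $v$ is not admissible in $\mathscr{N}_{\operatorname{rad}}(u_0)$, and it is not your $u^\star$ either (the Laplacian of $u^\star$ is harmonic in the annulus, unlike $|\Delta u|^*$). Your claimed comparison $\int_{E^\star}(\Delta u^\star)^2\,\mathrm{d}x\le\int_{B_1(0)}(\Delta u)^2\,\mathrm{d}x$ is true, but only via a detour: one rescales $v$ by the factor $u_0/w(1)\le 1$ to make it admissible (or, as in the paper, invokes the strict monotonicity of $u_0\mapsto\inf_{\psi\in\mathscr{N}_{\operatorname{rad}}(u_0)}F(\psi)$ from Remark~\ref{rem:strictlyincr}), and the resulting inequality is strict exactly when $w(1)>u_0$. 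Hence the entire weight of the proof rests on showing $w(1)>u_0$ for nonradial $u$, which your outline does not address.

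That strictness is the rigidity step, and the appeal to ``the rigidity statement for Talenti's inequality'' is not available off the shelf in this annular, mixed Dirichlet--Navier setting. The paper's Claim~3 has to extract it by tracking equality through every estimate: equality in the isoperimetric inequality forces each sublevel set $\{u<t\}$ to be a ball; equality in the divergence-theorem step forces $\Delta u\ge 0$, whence Hopf's lemma makes every $t\in(0,u_0]$ a regular value; equality in Cauchy--Schwarz forces $|\nabla u|$ to be constant on each level set; and only then does an ODE argument along the normalised gradient flow show that all these balls are concentric. Note also that your intended conclusion---that the level sets of $|\Delta u|$ are concentric balls---would at best give radiality of $\Delta u$, from which radiality of $u$ still requires an argument; the rigidity analysis must be run on the level sets of $u$ itself. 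As written, the proposal is a correct road map of the paper's proof with its two essential bridges missing.
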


The following proof builds on the \emph{Talenti symmetrisation technique}, introduced by G. Talenti in \cite{Talenti}. This technique uses the \emph{symmetric decreasing rearrangement} of the function $\Delta u$, where $u$ is a minimiser. Our rearrangement works slightly differently, we want to rearrange functions so that they are defined on an \emph{annular region}. The reason for this is that the classical symmetric decreasing rearrangement would relocate the set $\{ \Delta u = 0 \}$ to the boundary of $B_1(0)$. However for minimisers this set is to be expected in the middle of $B_1(0)$, because it must contain (up to a null set) the flat set $\{ u = 0 \}$.    

Since we define different to \cite{Talenti} the rearrangement in annular regions, we have to argue in what way Talenti's symmetrisation results carry over. We therefore collect some properties of the used rearrangement procedure in Appendix \ref{app:rearrangement}. The proofs of these properties do not defer very much from the classical case. Nevertheless, we present them for the reader's convenience.

\begin{proof}[Proof of Theorem \ref{thm:symmetry}]
Let $u \in \mathscr{N}(u_0)$ be a minimiser. According to Corollary~\ref{cor:global_Sobolev_regularity}
we have $u\in C^{1,\alpha}(\overline{B_1(0)})$ and may conclude that
\begin{equation}\label{eq:main_assumption}
\Omega_0 := \{x\in \Omega : u(x) = 0 \} \subset \subset \Omega = B_1(0).
\end{equation}
Since $u$ is biharmonic in $B_1(0)\setminus \Omega_0$, elliptic boundary regularity yields that we even have 
$u\in C^{1,\alpha}(\overline{B_1(0)})\cap C^\infty (\overline{B_1(0)}\setminus \Omega_0)$.

Suppose that $u$ is nonradial. Let $r_0 \in [0,1]$ be such that $|\Omega_0| = e_n r_0^n$. We remark first that $r_0<1$ is obvious from  \eqref{eq:main_assumption}. We observe further that $r_0>0$. Assume by contradiction that $r_0=0$, i.e. $|\Omega_0| =0$ and hence $|\{u\not=0\}|=e_n$. In view of Lemma~\ref{lem:energinfi} (cf. also Remark~\ref{rem:5.1}) this implies that
$F(u)=e_n$ and hence that $\Delta u=0$ a.e. in ${B_1(0)}$. Since by assumption $u\in H^2({B_1(0)})$, elliptic regularity yields that $u$ is classically harmonic in ${B_1(0)}$ and $u(x)\equiv u_0$, a contradiction. To conclude, we have shown that
$$
r_0\in (0,1).
$$

 We now investigate the \emph{annular symmetric decreasing rearrangement} of $|\Delta u|\big\vert_{B_1(0) \setminus \Omega_0}$, i.e. the radial function $|\Delta u|^* : B_1(0) \setminus \overline{B_{r_0}(0)} \rightarrow [0,\infty)$, which is uniquely determined by 
    \begin{equation*}
    \forall t \geq 0:\quad 
        \{ x \in B_1(0) \setminus \overline{B_{r_0}(0)} : |\Delta u|^* > t \} = B_{r(t)}(0) \setminus \overline{B_{r_0}(0)} 
    \end{equation*}
    for some $r(t) \in [r_0, 1]$ chosen in such a way that   \begin{equation*}
    \forall t \geq 0: \quad 
    e_n\left(r(t)^n-r_0^n \right) = |\{ x \in B_1(0) \setminus \Omega_0 : |\Delta u| > t \}| .
    \end{equation*}
     For details on this rearrangement procedure we refer again to Appendix \ref{app:rearrangement}. In particular, we have
    \begin{equation*}
    \forall t \geq 0: \quad 
        |\{ x \in B_1(0) \setminus \overline{B_{r_0}(0)} : |\Delta u|^* > t \}| = |\{ x \in B_1(0) \setminus \Omega_0 : |\Delta u| > t \}| .
    \end{equation*}
    We define $f:= |\Delta u|^* \in L^2({B_1(0) {\setminus \overline{B_{r_0}(0)}} })$ and for $r \in (r_0,1)$ and for some arbitrary vector $\xi \in \mathbb{R}^n$ with $|\xi| \equiv 1$ we set $\tilde{f}(r) := f(r\xi)$. Notice that this definition is independent of $\xi$, because $f$ is radial. 
    Next we define a function $w :[0,1] \rightarrow \mathbb{R}$ by
    \begin{equation}\label{eq:definition_of_w}
    w(r):= \int^r_{r_0} \rho^{1-n}\int^\rho_{r_0}\sigma^{n-1}\tilde{f}(\sigma)\,\mathrm{d}\sigma\, \mathrm{d} \rho, 
    \end{equation}
    such that $w \big\vert_{[0,r_0)} \equiv 0$ and $w \big\vert_{[r_0,1]}$ is the unique solution of 
    \begin{equation}\label{eq:laplaceradial}
        \begin{cases}
            w''(r) + \frac{n-1}{r}w'(r) = \tilde{f}(r), & \quad r \in (r_0,1], \\
             w(r_0) = w'(r_0) = 0.
        \end{cases}
    \end{equation}
    Moreover we define $v(x) := w(|x|)$. Since the left- and right-sided limits of $w$ and of $w'$ match up at $r = r_0$ one checks readily that $v \in H^2(B_1(0))$ and therefore $v \in \mathscr{N}_{\operatorname{rad}}(w(1))$, i.e. it is admissible for the minimisation in $\mathscr{N}_{\operatorname{rad}}(w(1))$.
    Notice that $\Delta v = f $ a.e. in $B_1(0) \setminus \overline{B_{r_0}(0)}$ since the left hand side in \eqref{eq:laplaceradial} is exactly the Laplace operator for radial functions. 
    Therefore one has 
    \begin{equation}\label{eq:deltav}
        \Delta v(x) = \begin{cases}
            f(x), & \mbox{\ if\ } r_0 <|x|<1, \\ 
            0,  & \mbox{\ if\ } |x|\leq r_0 .
        \end{cases}
    \end{equation}
    The main ingredients of the proof will be the following claims. \\
    \textbf{Claim 1.} $F(v) = F(u)$. \\
    \textbf{Claim 2.} $u_0 \leq w(1)$. \\
    \textbf{Claim 3.} Since by assumption $u$ is nonradial, one even has $u_0 < w(1)$. 
    \\
    If Claims 1, 2 and 3 are shown one can finish the proof. Indeed, employing the (strict) monotonicity properties proven in Remark \ref{rem:strictlyincr} we find:
    \begin{equation*}
        F(v) \geq \inf_{ \psi \in \mathscr{N}_{\operatorname{rad}}(w(1)) } F(\psi) \stackrel{(*)}{\ge}
        \inf_{ \psi \in \mathscr{N}_{\operatorname{rad}}(u_0) }  F(\psi) \geq \inf_{ \psi \in \mathscr{N}(u_0) }  F(\psi) = F(u) = F(v).
    \end{equation*}
    This shows that $v$ minimises $F$ in $\mathscr{N}_{\operatorname{rad}}(w(1))$. Since $v$ is not a constant, the inequality sign $(*)$ is even strict, again according to Remark \ref{rem:strictlyincr}.
    This  contradiction shows that each minimiser must be radial.
    
     \bigskip
    \noindent
    \textit{Proof of Claim 1.} 
We recall that by Lemma \ref{lem:Stam} $\Delta u = 0$ a.e. on $\Omega_0=\{ u = 0\}$. Observing \eqref{eq:deltav} and making use of Lemma~\ref{lem:rearrineq}(a) we find
\begin{align}
    \int_{B_1(0)} ( \Delta u )^2 \; \mathrm{d}x  &
     = \int_{B_1(0) \setminus \Omega_0} |\Delta u|^2 \; \mathrm{d}x
     = \int_{B_1(0) \setminus \overline{B_{r_0}(0)}} f(x)^2 \; \mathrm{d}x \nonumber\\
     &= \int_{B_1(0) \setminus \overline{B_{r_0}(0)}} (\Delta v(x))^2 \; \mathrm{d}x = \int_{B_1(0)} (\Delta v)^2 \; \mathrm{d}x.\label{eq:UnglBiharmenerg}
\end{align}
Next notice that by the definition of $w$ (cf. \eqref{eq:laplaceradial}) one has 
$\{ v = 0 \} \supset \overline{B_{r_0}(0)}$ and therefore $\{v \neq 0 \} \subset B_1(0) \setminus \overline{B_{r_0}(0)}$. 
Moreover,  $f=|\Delta u|^*\not\equiv 0$ on $B_1(0)\setminus \overline{B_{r_0}(0)}$
because otherwise $u$ would be harmonic in $B_1(0)$ and so $u>0$ in view of $u_0>0$, which contradicts $|\Omega_0| >0$. % Hier wird schon ein Argument von Schritt 3 gebracht, welches wir in Schritt 2 brauchen.
 We conclude that 
 \begin{equation}\label{eq:f_strictly_positive}
  \tilde f\ge 0, \quad \mbox{and}\quad
 \tilde f >0\quad  \mbox{ at least on an interval } \quad (r_0,r_0+\varepsilon)
 \quad \mbox{  for some } \quad \varepsilon >0. 
 \end{equation} 
 {From} \eqref{eq:definition_of_w} we see that $w(r)>0$ for all $r\in (r_0,1]$. 
This yields that even 
$\{ v = 0 \} = \overline{B_{r_0}(0)}$ and therefore $\{v \neq 0 \} = B_1(0) \setminus \overline{B_{r_0}(0)}$. 
Hence one can compute 
\begin{equation*}
    |\{ v \neq 0 \}|= |B_1(0) \setminus \overline{B_{r_0}(0)}| = |B_1(0)| - e_n r_0^n  = |B_1(0)  |-  |\Omega_0| = |B_1(0) \setminus \Omega_0| = |\{ u \neq 0 \}|. 
\end{equation*}
This and \eqref{eq:UnglBiharmenerg} imply that $F(u) = F(v)$.

    \bigskip
    \noindent
    \textit{Proof of Claim 2.} We define for $t \geq 0$ the increasing function 
    \begin{equation*}
        \mu(t) := |\{ x \in B_1(0) : u(x) < t \}|. 
    \end{equation*}
    For short we also use the notation $\mu(t) = |\{ u < t \}|$. 
    Assume now that $t\in (0,u_0)$ is a regular value of $u$.
     In view of the smoothness $u\in C^\infty (\overline{B_1(0)}\setminus \Omega_0)$  this is true by Sard's theorem for a.e. $t$. Then $\{ u < t \} \subset \subset {B_1(0)}$ is a $C^1$-smooth domain with boundary $\{ u = t \}$ and outward unit normal $\nu = \frac{\nabla u}{|\nabla u|}$, see \cite[3.4.3]{Federer}. Hence the Gauss divergence theorem yields 
    \begin{equation}\label{eq:equal1}
        \int_{\{ u < t \}} |\Delta u| \; \mathrm{d}x \geq \int_{\{ u < t \} } (\Delta u ) \; \mathrm{d}x = \int_{\{ u = t \} } \nabla u \cdot \nu  \; \mathrm{d}\mathcal{H}^{n-1} = \int_{\{ u = t \} } |\nabla u | \; \mathrm{d} \mathcal{H}^{n-1}.
    \end{equation}
     According to \cite[Kap. VIII, \S 2, Satz 4]{Natanson}, 
     $\mu$ is differentiable in almost all $t$ because $\mu$ is increasing. For such $t$ we infer from the coarea formula (see e.g. \cite[Section 3.3.4, Proposition 3]{EvGar}) that 
    \begin{equation*}
        \mu'(t) = \int_{\{ u = t \} } \frac{1}{|\nabla u|} \; \mathrm{d}\mathcal{H}^{n-1}.
    \end{equation*}
    We infer from this, the Cauchy Schwarz inequality and the isoperimetric inequality that the following holds for a.e. $t \in (0, u_0)$:
    \begin{align}\label{eq:isop}
         \mu'(t) \int_{\{u < t \}} |\Delta u |\; \mathrm{d}x  & 
         \geq  \left( \int_{\{ u = t \} } \frac{1}{|\nabla u|} \; \mathrm{d}\mathcal{H}^{n-1}\right) \, \left(\int_{\{ u = t \} } |\nabla u | \; \mathrm{d} \mathcal{H}^{n-1}\right)\nonumber\\
       & \geq \left( \int_{\{ u = t \} }
 |\nabla u|^{\frac{1}{2}} \frac{1}{|\nabla u|^{\frac{1}{2}}} \; \mathrm{d}\mathcal{H}^{n-1} \right)^2       
  = \mathcal{H}^{n-1} (\{  u = t \})^2\nonumber\\
  & \geq ( n e_n^{1/n} |\{ u < t \}|^\frac{n-1}{n} )^2  = n^2 e_n^{2/n} \mu(t)^{(2n-2)/n}.
    \end{align}
    Now we estimate with Lemma \ref{lem:Stam}, the definition of $\Omega_0=\{u=0\}$ and Lemma \ref{lem:rearrineq}(b)
    %Im Folgenden musste ueberall u>0 entfernt werden
    \begin{align}\label{eq:sublevellap}
        \int_{ \{u < t \} } |\Delta u| \; \mathrm{d}x & = \int_{ \{ u < t \} \setminus \Omega_0} |\Delta u |\; \mathrm{d}x = \int_{B_1(0) \setminus \Omega_0} 1_{\{  u<t\} \setminus \Omega_0} |\Delta u | \; \mathrm{d}x \\ &  \leq \int_{B_1(0) \setminus \overline{B_{r_0}(0)}} (1_{\{u<t\}\setminus \Omega_0})^* |\Delta u|^* \; \mathrm{d}x = \int_{B_1(0) \setminus \overline{B_{r_0}(0)}} (1_{\{u<t\}\setminus \Omega_0})^* f \; \mathrm{d}x. \nonumber
    \end{align}
    By Example \ref{ex:charfuncstar} one has that 
    \begin{equation*}
        (1_{\{ u < t \} \setminus \Omega_0 })^* = 1_{B_{r(t)}(0) \setminus \overline{B_{r_0}(0)}},  \quad \textrm{where $r(t) \in [r_0,1]$ is such that}
    \end{equation*}
    \begin{equation*}
        e_n r(t)^n - e_n r_0^n = |\{ x \in B_1(0) \setminus \Omega_0 : u(x) < t \}| =|\{ x \in B_1(0)  : u(x) < t \}| -|\Omega_0| = \mu(t) - e_n r_0^n,
    \end{equation*}
    i.e. $r(t) = ( \frac{\mu(t)}{e_n} )^{1/n}$. Using this and layer cake integration in \eqref{eq:sublevellap} we find \begin{equation*}
        \int_{\{ u < t \}} |\Delta u | \; \mathrm{d}x \leq \int_{r_0}^{r(t)} n e_n r^{n-1} \tilde{f}(r) \; \mathrm{d}r = \int_{r_0}^{(\frac{\mu(t)}{e_n})^{1/n}} n e_n r^{n-1} \tilde{f}(r) \; \mathrm{d}r.
    \end{equation*} 
    Plugging this into \eqref{eq:isop} we infer that
    \begin{equation*}
        \mu'(t) \int_{r_0}^{(\frac{\mu(t)}{e_n})^{1/n}} n e_n r^{n-1} \tilde{f}(r) \; \mathrm{d}r  \geq n^2 e_n^{2/n} \mu(t)^{(2n-2)/n}.
    \end{equation*}
    As a consequence we have  for a.e. $t \in (0,u_0)$ that
    \begin{equation}\label{eq:48}
        h(\tau) := \frac{e_n^{1-2/n}}{n \tau^{(2n-2)/n}} \int_{r_0}^{(\tau/e_n)^{1/n}} r^{n-1} \tilde{f}(r) \; \mathrm{d}r  \quad \textrm{satisfies} \quad h(\mu(t)) \mu'(t) \geq 1  .
    \end{equation}
    Since $h$ is in view of \eqref{eq:f_strictly_positive} strictly positive and continuous on %Zweimal die untere Intervallgrenze angepasst. Strictly ergaenzt
    $(e_n r_0^n,\infty)$ it possesses an increasing primitive  on $(e_n r_0^n,\infty)$, which we call $H$. Integrating \eqref{eq:48} on $(\varepsilon,u_0)$ we find according to \cite[Kap. VIII, \S 2, Satz 5]{Natanson} that
    \begin{equation*}
        u_0 - \varepsilon \leq \int_{\varepsilon}^{u_0} h(\mu(t)) \mu'(t) \; \mathrm{d}t = \int_\varepsilon^{u_0} \frac{d}{dt} (H \circ \mu)(t) \; \mathrm{d}t  \leq (H \circ \mu)(u_0) - (H\circ \mu)(\varepsilon) = \int_{\mu(\varepsilon)}^{\mu(u_0)} h(\tau) \; \mathrm{d}\tau. 
    \end{equation*}
    Notice that the second inequality sign is due to the fact that $H\circ \mu$  is in general merely increasing and not necessarily absolutely continuous.  Using that $\mu(u_0) \leq e_n$, %Hier kommt die entscheidende Beobachtung
    $$\mu(\varepsilon) \geq e_n r_0^n+|\{u<0\}| \quad \mbox{ for all }\quad \varepsilon > 0
    $$ and letting $\varepsilon \rightarrow 0+$ we find 
    \begin{equation}\label{eq:equal2}
        u_0 \leq \int_{e_n r_0^n+|\{u<0\}| }^{\mu (u_0)} h(\tau) \; \mathrm{d}\tau
       \leq \int_{e_n r_0^n+|\{u<0\}|}^{e_n} h(\tau) \; \mathrm{d}\tau
        \leq \int_{e_n r_0^n}^{e_n} h(\tau) \; \mathrm{d}\tau.
    \end{equation}%Es folgt wieder eine Praezisierung
    In view of the strict positivity of $h$ this inequality is strict in case
    that
    \begin{equation}\label{eq:u_positive}
    |\{u<0\}|=0
    \end{equation}
    is violated.
    Employing the substitution $\tau = e_n \sigma^n$, using \eqref{eq:48} and observing that for all $r \in [r_0,1]$ one has (by \eqref{eq:laplaceradial}) that $\frac{d}{dr} (r^{n-1} w'(r)) = r^{n-1} \tilde{f}(r)$ and $w(r_0) =w'(r_0) =0$ we infer
    \begin{align*}
        u_0 & \leq \int_{r_0}^1 n e_n \sigma^{n-1} h(e_n \sigma^n) \; \mathrm{d}\sigma 
        = \int_{r_0}^1 \frac{1}{\sigma^{n-1}} \int_{r_0}^\sigma r^{n-1}\tilde{f}(r) \; \mathrm{d}r
        \\ & = \int_{r_0}^1 \frac{1}{\sigma^{n-1}} \int_{r_0}^\sigma \frac{d}{dr} (r^{n-1} w'(r)) \; \mathrm{d}r
        = \int_{r_0}^1 w'(\sigma) \; \mathrm{d}\sigma = w(1) - w(r_0) = w(1) .
    \end{align*}%Nochmals praezisieren
    Again the inequality is strict, if \eqref{eq:u_positive} is violated.
    
    We infer that $u_0 \leq w(1)$, where the inequality is strict if $|\{u<0\}|>0$.

\bigskip
\noindent
\textit{Proof of Claim 3.}   
We recall that we have $u\in C^{1,\alpha}(\overline{B_1(0)})\cap C^\infty (\overline{B_1(0)}\setminus \Omega_0)$. In case that $|\{u<0\}|>0$ we already proved that then $u_0<w(1)$ so that in what follows we may assume \eqref{eq:u_positive}, i.e. that
$$
 u\ge 0 \quad a.e. \mbox{\ in\ } B_1 (0).
$$
 Since Claim 2 is already shown in it suffices to prove that $u_0 = w(1)$ would already imply that $u$ is radial. 
 This part of the proof is based on the ideas in \cite{Lions}. In case of equality all estimates in the proof of Claim 2 turn out to be equalities (for a.e. $t \in (0,u_0]$). 
 
 We first claim that for all $t \in (0,u_0]$ one has that $\{ u < t \}$ is a ball. Equality (for a.e. $t$) in the isoperimetric inequality in \eqref{eq:isop} implies that $\{ u < t \}$ is a ball for every $t \in (0,u_0]\setminus N$ (for a null set $N$). For $t \in N$ one can however find an increasing sequence $(t_k)_{k \in \mathbb{N}} \subset (0,u_0] \setminus N$ such that $t_k \uparrow t$. We infer that 
    \begin{equation*}
        \{ u < t \} = \bigcup_{ k \in \mathbb{N}} \{ u < t_k \},
    \end{equation*}
    from which it follows that $\{ u < t \}$ is a ball, because the sequence of balls is increasing and 
    the sequences of centres and radii have convergent subsequences. However, we have not yet shown that these balls have the same centre.
    
    We next show that $\{ u < u_0 \} = B_1(0)$. Note that it suffices to show (since $\{u<u_0\}$ is a ball) that $|\{ u < u_0 \}| = |B_1(0)|$, i.e. $\mu(u_0) = e_n$.
    Equality in \eqref{eq:equal2} yields that $h \equiv 0$ on $(\mu(u_0),e_n)$.  If we assume now that $\mu(u_0)< e_n$ then $h \equiv 0$ on an open interval with upper bound $e_n$. This and \eqref{eq:48} would yield
    \begin{equation*}
       0 = \lim_{z \rightarrow e_n} h(z) = \frac{1}{ne_n} \int_{r_0}^1 r^{n-1} \tilde{f}(r) \; \mathrm{d}r,
    \end{equation*}
    which is impossible in view of \eqref{eq:f_strictly_positive}.%Hier kuerzer, weil das Argument schon oben kam.
    
     Equality in \eqref{eq:equal1} for a.e. $t\in (0,u_0]$ yields that $\Delta u \geq 0$ a.e. in $\{ u < u_0\}$. Hence, 
     $u$ is subharmonic on $B_1(0)$.
    Thereupon the Hopf boundary lemma for subharmonic functions  (cf. \cite[Lemma 3.4]{GilbargTrudinger} yields that for all $t \in (0,u_0]$ one has  $\nabla u \neq 0$ on $\partial \{ u < t \}$. Notice here that the special case of $t= u_0$ uses the fact that $u \in C^{1,\alpha}(\overline{B_1(0)})$.  
    We also remark that it is needed here that $\{ u < t \}$ is a ball, otherwise it would not necessarily satisfy the required interior ball condition. We infer that for all $t \in (0,u_0]$ one has $\partial \{ u < t \} = \{ u = t \}$ and $|\nabla u | > 0$ on $\{ u = t \}$. This makes each $t\in (0,u_0]$ a regular value of $u$ and in particular the computation in \eqref{eq:equal1} can be done for all $t \in (0,u_0]$ (and not just for a.e. $t$).
    However, notice that equality in \eqref{eq:isop} still may be violated for $t$ in a null set of $(0,u_0)$. Equality in the application of the Cauchy-Schwarz-inequality yields that for a.e. $t> 0$ one has $|\nabla u| \equiv \mathrm{const} > 0$ on $\{ u = t \}$. Since for each $t \in (0,u_0]$ the level set $\{  u = t \}$ is regular, it can be approximated by the level sets $(\{ u = s \})_{s \in (t-\varepsilon,t)}$ and therefore $|\nabla u | \equiv \mathrm{const}$ does not just hold on almost every level set $\{ u = t \}$ but on every level set.
    This information given we now show that all balls $\{ u < t \}$, $t \in (0,u_0]$ have the same centre.
    We can define a continuous function $\eta : (0,u_0] \rightarrow \mathbb{R}$ that satisfies $|\nabla u|(x) =  \eta(u(x))$. (Notice that the continuity of $\eta$ follows again from the fact that each value is regular). We also set for $x \in \overline{B_1(0)}\setminus \Omega_0 $ 
    \begin{equation*}
        \phi(x) := -\int^{u_0}_{u(x)} \frac{1}{\eta(\tau)} \; \mathrm{d}\tau.
    \end{equation*}
    Now for each $z \in \partial B_1(0)$ we look at a (a priori not necessarily unique) non-extendable solution $x_z:J_z\to \mathbb{R}^n\setminus \Omega_0$ of the differential equation 
    \begin{equation}\label{eq:diffeqx}
        \begin{cases}
             x_z'(s) =  \frac{\nabla u}{|\nabla u|}(x_z(s)) & (s\in J_z),  \\ \; \; \;   x_z(0) = z.
        \end{cases}
    \end{equation}
    Here $J_z \subset \mathbb{R}$ is the open maximal interval of existence and contains the point $0$.    The existence of such a solution follows from Peano's theorem and Zorn's lemma, see e.g. \cite[Theorem 3.22]{Philip}. 
   Outside  of $\overline{B_1(0)}$ we have to choose a Hölder continuous extension of $\frac{\nabla u}{|\nabla u|}$. The chosen extension will, however, not be relevant for our argument as we will only look at values $s$ such that $x_z(s) \in \overline{B_1(0)}$.  
   Notice that for all $s \leq 0$ one has that $x_z(s) \in B_1(0)$ since $B_1(0) = \{ u < u_0 \}$ and $\frac{d}{ds} u(x_z(s)) = |\nabla u|(x_z(s)) > 0$ for all $s \in J_z, s\leq 0$. {From} now on let $\tilde{J}_z = J_z \cap (-\infty,0]$.    
   We compute for $s \in \tilde{J}_z$ that
    \begin{equation}
        \frac{d}{ds} \phi(x_z(s)) = \frac{1}{\eta(u(x_z(s)))} \nabla u(x_z(s)) \cdot x_z'(s) = \frac{1}{|\nabla u|(x_z(s))} \nabla u(x_z(s)) \cdot x_z'(s)  = 1. 
    \end{equation}
    Notice that for all $s_1, s_2 \in \tilde{J}_z$  one has 
    \begin{align*}
        |x_z(s_1) - x_z(s_2)| & = \left\vert \int_{s_1}^{s_2} x_z'(s) \; \mathrm{d}s \right\vert \leq\left\vert \int_{s_1}^{s_2}|x_z'(s)| \; \mathrm{d}s\right\vert = |s_1 - s_2|   \\ & = \left\vert \int_{s_1}^{s_2} \frac{d}{ds} \phi(x_z(s)) \; \mathrm{d}s \right\vert  = |\phi(x_z(s_1)) - \phi(x_z(s_2))| \leq |x_z(s_1) - x_z(s_2)|,
    \end{align*}
    where we used in the last step that $|\nabla \phi(x)| = \frac{1}{\eta(u(x))}|\nabla u(x)| = 1$ for all $x \in B_1(0)\setminus \Omega_0$. We conclude that for all $s_1,s_2 \in \tilde{J}_z$ one has $|x_z(s_1)-x_z(s_2)| = |s_1 -s_2|$. By virtue of the triangle inequality  this implies that $x_z\vert_{\tilde{J}_z}$ is a straight line and is {moreover} parameterised with unit speed. Therefore there exists $v \in \mathbb{R}^n$ such that $|v| = 1$ and $x_z(t) = z + tv$ for all $t \in \tilde{J}_z$.
    Looking at \eqref{eq:diffeqx} at $s= 0$ we infer also that
    \begin{equation}\label{eq:46}
        v =  x_z'(0) \big\vert_{s = 0} = \frac{\nabla u}{|\nabla u|}(x_z(0)) =  \frac{\nabla u}{|\nabla u|}(z)   = \nu_{\{u= u_0\}}(z) = \nu_{\partial B_1(0)}(z) = z. 
    \end{equation}
    Hence $x_z(s) = (1+s)z$ and for all $s \in \tilde{J}_z$ and (since $|\nabla u|>0$ on $\{ u \neq 0 \}$) maximality of $\tilde{J}_z$ yields $\tilde{J}_z =  \{ s \in (-1,0) : u((1+s)z) \neq 0 \}$. 
For each $z \in \partial B_1(0)$ we define  $f_z: \tilde{J}_z \rightarrow \mathbb{R}, s \mapsto u((1+s) z)$. To prove radiality it suffices to show that $f_z$ does not depend on $z$. 
    For all $s \in \tilde{J}_z$ one has
    \begin{align}\label{eq:46.5}
       \frac{d}{ds} u((1+s) z) & = \frac{d}{ds} u(z + s z ) = \frac{d}{ds} u(x_z(s)) = \nabla u (x_z(s)) x_z'(s) \\ & = |\nabla u(x_z(s))| = \eta (u(x_z(s))) = \eta( u((1+s)z)) .
    \end{align}
     Therefore for any $z$, $f_z$ is a solution of 
    \begin{equation*}
        \begin{cases}
             f_z'(s) = \eta(f_z(s)) & (s \in \tilde{J}_z), \\ 
             f_z(0) = u_0.
        \end{cases}
    \end{equation*}
    As $\eta$ is not Lipschitz continuous, no general uniqueness result is applicable to show that $f_z$ is independent of $z$. However, we can still obtain uniqueness by separation of variables. Indeed, If $\bar{\eta}$ is a primitive  of $\frac{1}{\eta}$ we obtain that 
    \begin{equation*}
        \frac{d}{ds} \bar{\eta} (f_z(s)) = \frac{f_z'(s)}{\eta(f_z(s))} = 1 \quad \Rightarrow \quad \forall s \in \tilde{J}_z  : \quad
        \bar{\eta}(f_z(s)) - \bar{\eta}(u_0) = s  .
    \end{equation*}
    Using that $\bar{\eta}$ is strictly increasing and hence invertible we obtain that 
    \begin{equation*}
        f_z(s) = \bar{\eta}^{-1}( \bar{\eta}(u_0) + s )  \quad (s \in \tilde{J}_z). 
    \end{equation*}
    We infer that $f_z$ (and also $\tilde{J}_z$, since $\tilde{J}_z= \{ f_z > 0 \}$), are independent of $z$. Hence $z \mapsto u((1+s) z)$ is independent of $z$ on the annulus $ \{ u > 0 \}$. We infer that $u$ is radial and $\{ u > 0\}$ is an annulus with centre zero. The radiality is shown. This completes the proof of Claim 3 and also the proof of the theorem. 
    \end{proof}

As a consequence we obtain the optimality of the $C^{1,\alpha}$-regularity of minimisers, which we already mentioned in the introduction.

\begin{corollary}\label{cor:opt_reg}
There exists a smooth domain ${B_1(0)} \subset \mathbb{R}^n$ and a smooth boundary datum $\varphi \in C^\infty(\overline{\Omega})$ such that each Navier minimiser $u \in \mathscr{N}$ does not lie in $C^2(\Omega)$.
\end{corollary}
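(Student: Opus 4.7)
The plan is to take $n=2$, $\Omega = B_1(0) \subset \mathbb{R}^2$, and $\varphi \equiv u_0$ for a sufficiently small positive constant $u_0$, and to show that for this data every Navier minimiser fails to be in $C^2(\Omega)$. The ingredients have essentially all been prepared earlier in the paper; what remains is to combine them in the right order.

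First, I would use Example \ref{eq:Naviern=2} with $\lambda = 1$ to see that for $u_0 > 0$ small enough the radial infimum satisfies $\inf_{v \in \mathscr{N}_{\operatorname{rad}}} F(v) < \pi = |B_1(0)|$. Indeed, the explicit formula for $f_1(u_0, \rho)$ shows that $f_1(u_0, \rho) \to \lambda \pi (1-\rho^2) = \pi (1 - \rho^2) < \pi$ as $u_0 \downarrow 0$ for any fixed $\rho \in (0,1)$ (this is visible in the plots of Figure \ref{fig:radial_minima}, and algebraically since the second term of $f_1$ is $O(u_0^2)$). Thus for $u_0$ small, the infimum in \eqref{eq:infvalueNrad} is attained by a nonconstant $u \in \mathscr{N}_{\operatorname{rad}}$ with $|\{u = 0\}| > 0$.

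Next, I invoke Theorem \ref{thm:symmetry}, which asserts that in this setting every minimiser in $\mathscr{N}(u_0)$ is radially symmetric. Hence every Navier minimiser $u \in \mathscr{N}$ coincides with a minimiser in $\mathscr{N}_{\operatorname{rad}}(u_0)$, and in particular (by the computations of Example \ref{eq:Naviern=2}) it is not the constant function $u_0$ and satisfies $|\{u = 0\}| > 0$.

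Finally, I apply the claim proved in Remark \ref{rem:4}: any radial Navier minimiser $u$ with $|\{u = 0\}| > 0$ satisfies $\Delta u \notin C^0(B_1(0))$, and in particular $u \notin C^2(B_1(0))$. Combining this with the previous step finishes the proof. The whole argument is essentially a bookkeeping exercise, so I do not foresee a serious obstacle; the only thing to double-check is that the example of a nonconstant minimiser is not a spurious artifact of working only within $\mathscr{N}_{\operatorname{rad}}$, but this is precisely ruled out by the symmetry result in Theorem \ref{thm:symmetry}.
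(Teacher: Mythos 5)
Your argument is correct and follows essentially the same route as the paper: choose $u_0$ small so that the explicit formula of Example~\ref{eq:Naviern=2} gives $\inf_{\mathscr{N}_{\operatorname{rad}}}F<\pi$ (hence $\inf_{\mathscr{N}}F<\pi$ and every minimiser has $|\{u=0\}|>0$), invoke Theorem~\ref{thm:symmetry} to conclude radiality, and apply the claim of Remark~\ref{rem:4}. No gaps.
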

\begin{proof}
    Let $n= 2$, $\Omega = B_1(0) \subset\mathbb{R}^2$ and $\varphi\equiv \mathrm{const} = u_0>0 $.
     By Example~\ref{eq:Naviern=2} it is clearly possible to choose $u_0$ so small such that 
    \begin{equation*}
    \inf_{v \in \mathscr{N}_{\operatorname{rad}}} F(v) < \pi
    \quad \Rightarrow\quad 
    \inf_{v \in \mathscr{N}} F(v) < \pi.
    \end{equation*}
    This implies that for any minimiser $u\in \mathscr{N}$ the flat set $\Omega_0=\{x\in \Omega :u(x)=0\}$ is not empty.
    
    The previous theorem yields that each minimiser is radial. Since nonconstant minimisers in $\mathscr{N}_{\operatorname{rad}}$ are not in $C^2$ by Remark \ref{rem:4} we infer that no minimiser can lie in $C^2$. 
\end{proof}

\begin{remark}
 This regularity behaviour is fundamentally different from the Alt-Caffarelli-type problem in \cite{DiKaVa}, where $C^2$-regularity can be expected and has already been proved in the case of $n = 2$ in \cite{Mueller}.  
\end{remark}

%====================================================================

\appendix

\section{Proof of the local $BMO$-estimate}\label{sec:app_proof_BMO}

\begin{proof}[Proof of Theorem~\ref{thm:Delta_BMO}] We choose some $R_0\in (0,\frac13 r_\Omega)$ and keep it fixed in what follows.
	Let $0<r<R\le R_0$ be arbitrary. 
	We pick an arbitrary admissible $x_0\in \Omega$ (i.e. $\overline{B_{2R}(x_0)}\subset\subset B_{3R}(x_0) \subset \Omega$) and keep it fixed in what follows. By minimising $v\mapsto \int_{B_{2R}(x_0)}(\Delta v)^2\, \mathrm{d}x$ on $u+H^2_0 (B_{2R}(x_0))$ we find 
	$$
	\tilde h \in u+H^2_0 (B_{2R}(x_0)):\quad \Delta^2 \tilde h=0 \quad \mbox{weakly in }\quad B_{2R}(x_0).
	$$
	By elliptic regularity $\tilde{h}\in C^\infty(B_{2R}(x_0))$. We define $h \in \mathscr{N}$ (or respectively $h \in \mathscr{D}$) via
	$$
	h(x):=
	\begin{cases}
	\tilde h(x) \quad &\quad \mbox{for}\quad x\in B_{2R}(x_0),\\
	u(x) \quad &\quad \mbox{for}\quad x\in \Omega \setminus B_{2R}(x_0).
	\end{cases}
	$$
	By minimality of $u$ we have that $F(u)\le F(h)$ which implies that
	$$
	\int_{B_{2R}(x_0)}(\Delta u)^2\, \mathrm{d}x +|\{x\in B_{2R}(x_0):u(x)\not=0 \}|\le 
	\int_{B_{2R}(x_0)}(\Delta h)^2\, \mathrm{d}x +|\{x\in B_{2R}(x_0):h(x)\not=0 \}|.
	$$
	This yields
	\begin{equation}\label{eq:2.2}
	\int_{B_{2R}(x_0)}\left( (\Delta u)^2- (\Delta h)^2\right)\, \mathrm{d}x\le C R^n
	\end{equation}
	with a universal constant $C>0$. Since $\Delta^2 h=0$ in $B_{2R}(x_0)$ 
	and $u-h\in H^2_0 (B_{2R}(x_0))$ we observe that
	\begin{align*}
	\int_{B_{2R}(x_0)}\left( (\Delta u)^2- (\Delta h)^2\right)\, \mathrm{d}x
	=& \int_{B_{2R}(x_0)}\left( \Delta u-\Delta h\right)\,\left( \Delta u+\Delta h\right)\, \mathrm{d}x\\
	=& \int_{B_{2R}(x_0)}\left( \Delta u-\Delta h\right)\,\left( \Delta u-\Delta h\right) \, \mathrm{d}x
	=  \int_{B_{2R}(x_0)}\left( \Delta u-\Delta h\right)^2 \, \mathrm{d}x.
	\end{align*}
	With this, we conclude from \eqref{eq:2.2} that 
	\begin{equation}\label{eq:2.3}
	\int_{B_{2R}(x_0)} \left( \Delta u-\Delta h\right)^2 \, \mathrm{d}x\le C R^n.
	\end{equation}
	By means of H\"older's inequality
	$$
	|(\Delta u)_{x_0,r} - (\Delta h)_{x_0,r} |^2\le \left(\fint_{B_r(x_0)} |\Delta u -\Delta h|\, \mathrm{d}x\right)^2 \le \fint_{B_r(x_0)} |\Delta u -\Delta h|^2\, \mathrm{d}x,
	$$
	we conclude from \eqref{eq:2.3} that 
	\begin{equation}\label{eq:2.4}
	\int_{B_r(x_0)}|(\Delta u)_{x_0,r} - (\Delta h)_{x_0,r} |^2\, \mathrm{d}x\le \int_{B_r(x_0)} |\Delta u -\Delta h|^2\, \mathrm{d}x
	\le C R^n.
	\end{equation}
    From here on one may conclude the proof by putting $U:=\Delta u$ und $H:= \Delta h$ and then copying word by word the proof
    of Theorem~\ref{thm:app_Delta_BMO}, starting with the line below \eqref{eq:app_2.4}.
\end{proof}

\section{Symmetric decreasing rearrangements in domains with holes}\label{app:rearrangement}

In the present article, we use a special type of rearrangement, namely a symmetric rearrangement in \emph{annular regions}. In this appendix we introduce the rearrangement procedure and discuss all important needed properties. 

A fundamental observation that is needed throughout this appendix is that a function $f: A \rightarrow [0,\infty)$ (A arbitrary set) is uniquely determined by its \emph{superlevel sets}, i.e. sets of the form $\{ x \in A : f(x) > t \}$, $t \geq 0$, which we call for short $\{ f > t \}$ in the sequel, if $A$ is clear from the context. Indeed, once all superlevel sets $\{ f > t \}$ are known $f$ can be retrieved by means of the formula 
\begin{equation}\label{eq:fdarst}
    f(x)= \int_0^{f(x)} 1 \; \mathrm{d}t = \int_0^\infty 1_{\{ f> t \}}(x) \; \mathrm{d}t.
\end{equation}

We recall the notation $e_n := |B_1(0)|$. Throughout this section we let $C \subset B_1(0)$ be a closed set such that $0 < |C| < e_n$ and $f: B_1(0) \setminus C \rightarrow [0,\infty)$ be a measurable function. We let $r_0 \in (0,1)$ be such that $|C| = e_n r_0^n$. We now define the \emph{ annular symmetric decreasing rearrangement}  $f^* : B_1(0) \setminus \overline{B_{r_0}(0)} \rightarrow [0,\infty)$ via its superlevel sets, demanding that for all $t \geq 0$, 
\begin{equation*}
    \{ x \in B_1(0) \setminus \overline{B_{r_0}(0)} : f^*(x) > t \} = B_{r(t)}(0) \setminus \overline{B_{r_0}(0)}, 
\end{equation*}
is satisfied, where $r(t) \in [r_0,1]$ is the unique solution of 
\begin{equation*}
    e_n r(t)^n - e_n r_0^n = |\{ x \in B_1(0) \setminus C : f(x) > t \}|. 
\end{equation*}
{From} this we conclude that for all $t \geq 0$
\begin{equation*}
     |\{ x \in B_1(0) \setminus \overline{B_{r_0}(0)} : f^*(x) > t \}| =  |\{ x \in B_1(0) \setminus C : f(x) > t \}|.
\end{equation*}
This yields for all $t\ge 0$ and $x\in B_1(0)\setminus \overline{B_{r_0}(0)}$:
\begin{equation}\label{eq:fstar_radius}
|x| < r(t)\quad \Leftrightarrow\quad e_n \left( |x|^n -r_0^n\right) < e_n \left( r(t)^n -r_0^n\right)
= |\{ y \in B_1(0) \setminus C : f(y) > t \}| .
\end{equation}
More compactly one can write, making use of \eqref{eq:fdarst}:
\begin{equation}\label{eq:fstarformula}
    f^*(x) = \int_0^\infty 1_{B_{r(t)}(0) \setminus \overline{B_{r_0}(0)}}(x) \; \mathrm{d}t, \quad \textrm{where} \quad r(t) = \left( \frac{|\{ f > t \}|}{e_n} + r_0^n\right)^\frac{1}{n}.
\end{equation}
Combining this with \eqref{eq:fstar_radius} we find further 
for $x\in B_1(0)\setminus  \overline{B_{r_0}(0)}$ that
\begin{align*}
 f^*(x) = &\int_0^\infty 1_{B_{r(t)}(0) \setminus \overline{B_{r_0}(0)}}(x) \; \mathrm{d}t
 =|\{t\in (0,\infty): r(t)>|x|\}|\\
 =& \sup \{t\in [0,\infty): r(t)>|x|\}\\
 =&\sup \left\{t\in [0,\infty):    |\{ y \in B_1(0) \setminus C : f(y) > t \}| > e_n \left( |x|^n -r_0^n\right)\right\},
\end{align*}
which may have served as an equivalent definition of 
 $f^* : B_1(0) \setminus \overline{B_{r_0}(0)} \rightarrow [0,\infty)$.

\begin{example}\label{ex:charfuncstar}
For a set $E \subset B_1(0) \setminus C$ we define 
\begin{equation}\label{eq:chiEstar}
    E^* := B_s(0) \setminus \overline{B_{r_0}(0)} \quad \textrm{where } s = \left( \frac{|E|}{e_n} + r_0^n \right)^\frac{1}{n}. 
\end{equation}
One readily checks that then $|E^*| = |E|$. We claim that  
\begin{equation}\label{eq:chiEstar2}
    (1_E)^* = 1_{E^*}. 
\end{equation}
To this end we observe that for all $x \in B_1(0) \setminus \overline{B_{r_0}(0)}$ one has 
\begin{equation}\label{eq:chiE}
    (1_E)^*(x) = \int_0^\infty 1_{B_{r(t)}(0) \setminus \overline{B_{r_0}(0)}}(x) \; \mathrm{d}t,
\end{equation}
where we have for $t \geq 0$:
\begin{equation*}
    e_n r(t)^n- e_n r_0^n  = |\{ x \in B_1(0) \setminus C : 1_E(x) > t \}|= \begin{cases}
        |E| & t \in [0,1), \\ 0 & t \geq 1.
    \end{cases}
\end{equation*}
This is equivalent to
\begin{equation*}
    r(t) = \begin{cases} \left( \frac{|E|}{e_n}+ r_0^n \right)^\frac{1}{n} & t \in [0,1), \\ r_0 & t \geq 1. \end{cases}
\end{equation*}
Using this and \eqref{eq:chiE} one readily infers  
\eqref{eq:chiEstar2}. 
\end{example}

\begin{remark}\label{rem:nurcharfunc}
The previous example and \eqref{eq:fstarformula} show  that for each measurable $f : B_1(0) \setminus C \rightarrow [0,\infty)$ one has
\begin{equation*}
    f^*(x) = \int_0^\infty (1_{\{f > t \}})^*(x) \; \mathrm{d}t. 
\end{equation*}
\end{remark}
In order to prove Theorem~\ref{thm:symmetry} we need a \emph{rearrangement inequality} for annular symmetric decreasing rearrangements. The proof is completely anlogous to the classical case of rearrangements in balls, cf. \cite[Theorem 3.4]{Lieb}. We present it for the sake of the reader's convenience. 

\begin{lemma}[Rearrangement inequality]\label{lem:rearrineq}
Let $f, g: B_1(0) \setminus C \rightarrow [0, \infty)$ be measurable functions. Then 
\begin{itemize}
	\item[(a)]
\begin{equation*}
\int_{B_1(0) \setminus C} f^2 \; \mathrm{d}x = \int_{B_1(0) \setminus \overline{B_{r_0}(0)}} (f^*)^2  \; \mathrm{d}x;
\end{equation*}
\item[(b)]
\begin{equation*}
    \int_{B_1(0) \setminus C} f g \; \mathrm{d}x \leq \int_{B_1(0) \setminus \overline{B_{r_0}(0)}} f^* g^* \; \mathrm{d}x.
 \end{equation*}
\end{itemize}
\end{lemma}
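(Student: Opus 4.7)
The plan is to reduce both statements to the equimeasurability property built into the definition of $f^*$, together with the description of superlevel sets of $f^*$ as annular regions.

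For part (a), I would use a layer-cake argument. By Cavalieri's principle,
\begin{equation*}
\int_{B_1(0)\setminus C} f^2\,\mathrm{d}x \;=\; \int_0^\infty 2t\,|\{x\in B_1(0)\setminus C : f(x) > t\}|\,\mathrm{d}t,
\end{equation*}
and similarly for $(f^*)^2$ on $B_1(0)\setminus\overline{B_{r_0}(0)}$. By the very definition of $f^*$, the measures of the corresponding superlevel sets coincide for every $t\ge 0$, so the two layer-cake integrals are equal. This is a routine step.

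For part (b), I would proceed in two stages. First handle the case of characteristic functions $f=1_A$, $g=1_B$ where $A,B\subset B_1(0)\setminus C$ are measurable. By Example~\ref{ex:charfuncstar} we have $(1_A)^* = 1_{A^*}$ and $(1_B)^* = 1_{B^*}$, where $A^*$ and $B^*$ are the annular regions in \eqref{eq:chiEstar}. The essential observation is that any two such annular regions are \emph{nested}: since they share the inner boundary $\partial B_{r_0}(0)$, one is contained in the other, so $|A^*\cap B^*| = \min(|A^*|,|B^*|) = \min(|A|,|B|) \ge |A\cap B|$. This yields the claim for indicators.

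Second, I would pass to the general case via the layer-cake representations
\begin{equation*}
fg = \int_0^\infty\!\!\int_0^\infty 1_{\{f>s\}}(x)\,1_{\{g>t\}}(x)\,\mathrm{d}s\,\mathrm{d}t, \qquad f^*g^* = \int_0^\infty\!\!\int_0^\infty (1_{\{f>s\}})^*(x)\,(1_{\{g>t\}})^*(x)\,\mathrm{d}s\,\mathrm{d}t,
\end{equation*}
where the second identity uses Remark~\ref{rem:nurcharfunc} together with Fubini. Integrating both identities in $x$, swapping the order of integration (Tonelli, all integrands being nonnegative), and applying the indicator case inside the double integral over $(s,t)$ gives (b). The potentially delicate point is the validity of the layer-cake formula for $f^*g^*$, which I expect to follow cleanly from Remark~\ref{rem:nurcharfunc} and nonnegativity. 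There is essentially no other obstacle: the annular geometry only differs from the classical ball case through the inner boundary, which is harmless since \emph{all} rearranged superlevel sets contain $\partial B_{r_0}(0)$ as inner boundary and hence remain totally ordered by inclusion.
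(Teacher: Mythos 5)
Your proof is correct and follows essentially the same route as the paper's: part (a) via the layer-cake formula and equimeasurability of superlevel sets, and part (b) by first treating characteristic functions using the nestedness of the annular rearranged sets (which all share the inner radius $r_0$) and then passing to general $f,g$ via the layer-cake representation, Tonelli, and Remark~\ref{rem:nurcharfunc}. Your formulation $|A^*\cap B^*|=\min(|A^*|,|B^*|)=\min(|A|,|B|)\ge |A\cap B|$ is in fact a slightly cleaner statement of the key step than the paper's own wording of it.
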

\begin{proof}
	\begin{itemize}
		\item[(a)] This claim follows from the equimeasurability of the level sets of $f$ and $f^*$ and Fubini's theorem:
		\begin{align}
	& \int_{B_1(0) \setminus C} |f|^2 \; \mathrm{d}x = \int_{B_1(0) \setminus C }\int_0^{|f(x)|} 2 t \; \mathrm{d}t \; \mathrm{d}x \nonumber \\ 
		& = \int_0^\infty 2t |\{ x \in B_1(0) \setminus C: |f | > t \}| \;  \mathrm{d}t = \int_0^\infty 2t |\{ x \in   B_1(0) \setminus \overline{B_{r_0}(0)}: f^* > t \}| \; \mathrm{d}t \nonumber	\\ 
	& = \int_{B_1(0) \setminus \overline{B_{r_0}(0)}} f^*(x)^2 \; \mathrm{d}x. \nonumber
	\end{align}
	
	\item[(b)]	
    We first assume that $f = 1_E$ and $g = 1_F$ for some measurable sets $E,F \subset B_1(0) \setminus C$. 
    Without loss of generality we may assume $|E| \geq |F|$. Using \eqref{eq:chiEstar} we infer that then $E^* \supset F^*$. This together with \eqref{eq:chiEstar2} from
    Example \ref{ex:charfuncstar} implies that
    \begin{equation*}
          (1_E)^* (1_F)^* = 1_{E^*} 1_{F^*} = 1_{E^*}.
    \end{equation*}
    Integrating we find 
    \begin{equation}\label{eq:firststepchar}
        \int_{B_1(0) \setminus \overline{B_{r_0}(0)}} (1_E)^* (1_F)^* \; \mathrm{d}x = |E^*| = |E| \geq |E \cap F| = \int_{B_1(0) \setminus C}  (1_E)(1_F) \; \mathrm{d}x.
    \end{equation}
    The claim follows therefore for characteristic functions. 
    Now let $f, g : B_1(0) \setminus C \rightarrow [0, \infty)$ be arbitrary. Using \eqref{eq:fdarst}, \eqref{eq:firststepchar} and Remark \ref{rem:nurcharfunc} we find
    \begin{align*}
        \int_{B_1(0) \setminus C} f g \; \mathrm{d}x  & = \int_{B_1(0) \setminus C} \left( \int_0^\infty 1_{ \{f > t \} }(x)  \; \mathrm{d}t \right)  \left( \int_0^\infty 1_{ \{g > s \} }(x)  \; \mathrm{d}s \right)  \; \mathrm{d}x \\ & = \int_0^\infty \int_0^\infty \int_{B_1(0) \setminus C} 1_{ \{f > t \} }(x) 1_{ \{g > s \} }(x) \; \mathrm{d}x \; \mathrm{d}t \; \mathrm{d}s 
        \\ & \leq \int_0^\infty \int_0^\infty \int_{B_1(0) \setminus \overline{B_{r_0}(0)}} (1_{ \{f > t \} })^*(x) (1_{ \{g > s \} })^*(x) \; \mathrm{d}x \; \mathrm{d}t \; \mathrm{d}s
        \\ & = \int_{B_1(0) \setminus \overline{B_{r_0}(0)}}\left( \int_0^\infty (1_{ \{f > t \} })^*(x)  \; \mathrm{d}t \right) \left( \int_0^\infty (1_{ \{g > s \} })^*(x)  \; \mathrm{d}s \right) \; \mathrm{d}x  \\ & = \int_{B_1(0) \setminus \overline{B_{r_0}(0)}} f^*(x) g^*(x) \; \mathrm{d}x.
    \end{align*}
\end{itemize}
\end{proof}

%===========================================================================

\section{Stampacchia's lemma for second derivatives}

In this appendix we prove an analogue to \emph{Stampacchia's lemma}, cf. \cite[Chapter II, Lemma A.4]{KlStam} for second derivatives, which will prove helpful for the argument. 

\begin{lemma}\label{lem:Stam}
Let $\Omega \subset \mathbb{R}^n$ and $u \in H^2(\Omega)$. Then $D^2u = 0$ a.e. on $\{ u = 0\}$.
\end{lemma}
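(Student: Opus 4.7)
The plan is to bootstrap the classical first-order Stampacchia lemma (the one cited in the paper as \cite[Chapter II, Lemma A.4]{KlStam}) to the second derivatives by applying it twice in succession. The only nontrivial input is the first-order version: for $v \in H^1(\Omega)$, one has $\nabla v = 0$ almost everywhere on $\{v = 0\}$.

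First I would invoke the embedding $H^2(\Omega) \hookrightarrow H^1(\Omega)$ and apply the classical Stampacchia lemma directly to $u$. This yields $\partial_i u = 0$ almost everywhere on $\{u = 0\}$ for every $i \in \{1,\dots,n\}$. In particular, up to a Lebesgue null set, $\{u = 0\} \subset \{\partial_i u = 0\}$ for each $i$.

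Next, since $u \in H^2(\Omega)$, each weak partial derivative $\partial_i u$ lies in $H^1(\Omega)$. Applying the classical Stampacchia lemma once more, this time to $v := \partial_i u$, I obtain $\nabla(\partial_i u) = 0$ almost everywhere on $\{\partial_i u = 0\}$, i.e.\ $\partial_j \partial_i u = 0$ almost everywhere on $\{\partial_i u = 0\}$ for every $j$. Combining with the inclusion from the previous paragraph gives $\partial_j \partial_i u = 0$ almost everywhere on $\{u = 0\}$ for all $i,j$, which is exactly $D^2 u = 0$ a.e.\ on $\{u = 0\}$.

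I do not expect any genuine obstacle here: the inclusions between level sets hold only up to null sets, but that is harmless because all assertions are of an almost-everywhere nature, and countably many null sets (one per index pair $(i,j)$) can be discarded. The entire argument is an iteration of the scalar Stampacchia lemma and requires no further hypothesis on $\Omega$ beyond it being a measurable subset of $\mathbb{R}^n$.
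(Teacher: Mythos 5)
Your proposal is correct and follows essentially the same route as the paper's proof: apply the first-order Stampacchia lemma first to $u$ and then to each $\partial_i u \in H^1(\Omega)$, and combine the resulting inclusions up to finitely many null sets. The only cosmetic difference is the citation label (the paper's proof actually invokes \cite[Chapter II, Lemma A.6]{KlStam} for the first-order statement), which does not affect the argument.
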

\begin{proof}
    Let $u \in H^2(\Omega)$. By \cite[Chapter II, Lemma A.4]{KlStam} one has  that $\nabla u = 0$ a.e. on $\{ u = 0 \}$. In particular there exists some null set $N \subset \Omega$ such that $\{ u = 0 \} \subset \{ \nabla u = 0 \} \cup N$. Now notice that for each $j= 1,...,n$ one has that $\partial_j u \in H^1(\Omega)$. Hence \cite[Chapter II, Lemma A.4]{KlStam} yields that 
    \begin{equation*}
        \{ \partial_j u = 0 \} \subset \{ \nabla \partial_j u = 0 \} \cup N_j \quad \forall j = 1,...,n
    \end{equation*}
    for a null set $N_j$. Defining $M := \bigcup_{j = 1}^n N_j$ we obtain 
    \begin{align*}
        \{ u = 0 \} & \subset \{ \nabla u = 0 \} \cup N \subset \bigcap_{j = 1}^n \{ \partial_j u =0 \} \cup N 
        \subset \bigcap_{j = 1}^n (\{ \nabla \partial_j u =0 \}  \cup N_j) \cup  N  \\ &\subset \bigcap_{j = 1}^n (\{ \nabla \partial_j u =0 \} \cup M \cup N  = \{ D^2 u = 0\} \cup (M \cup N).
    \end{align*}
    Since $M \cup N$ is a null set we infer that $D^2u = 0$ a.e. on $\{ u = 0 \}$.
\end{proof}

\bigskip\noindent
{\bfseries Acknowledgement.} %Ergaenzt
We are grateful to the referees for a very careful reading and for giving a very detailed and extremely valuable feedback on our submission. This led not only to improving the exposition but also to correcting two errors and some inaccuracies from the previous version.

The second author is also grateful to Mickael Nahon for helpful discussions.

%===================================================================

\end{document}